\documentclass[11pt,reqno]{amsart}

\usepackage{amsthm, amsmath, amsfonts, amssymb, graphicx, tikz-cd, MnSymbol, comment, hyperref, enumerate}

\usepackage[utf8]{inputenc}

\DeclareFontFamily{U}{mathb}{\hyphenchar\font45}
\DeclareFontShape{U}{mathb}{m}{n}{
<-6> mathb5 <6-7> mathb6 <7-8> mathb7
<8-9> mathb8 <9-10> mathb9
<10-12> mathb10 <12-> mathb12
}{}
\DeclareSymbolFont{mathb}{U}{mathb}{m}{n}
\DeclareMathSymbol{\llcurly}{\mathrel}{mathb}{"CE}

\newtheorem{theorem}{Theorem}[section]
\newtheorem{lemma}[theorem]{Lemma}
\newtheorem{proposition}[theorem]{Proposition}
\newtheorem{corollary}[theorem]{Corollary}

\newtheorem{claim}[theorem]{Claim}
\theoremstyle{definition}
\newtheorem{definition}[theorem]{Definition}
\newtheorem{notation}[theorem]{Notation}

\newtheorem{remark}[theorem]{Remark}

\newenvironment{proofclaim}{\paragraph{\emph{Proof of the Claim}.}}{\hfill$\qed$\\}

\setlength{\topmargin}{-0.5in}
\setlength{\textheight}{9in}
\setlength{\oddsidemargin}{0in}
\setlength{\evensidemargin}{0in}
\setlength{\textwidth}{6.5in}

\newcommand{\func}[1]{\operatorname{#1}}

\def\Sp{{\sf Sp}}

\def\ann{{\sf Ann}}
\def\arch{{\sf Arch}}

\def\RO{\mathcal{RO}}

\newcommand\End{{\sf End}}
\newcommand{\dv}[1]{\mathfrak{#1}}

\def\bal{\boldsymbol{\mathit{ba}\ell}}
\def\ubal{\boldsymbol{\mathit{uba}\ell}}
\def\dbal{\boldsymbol{\mathit{dba}\ell}}

\def\pda{{\sf PDA}}
\def\cpda{{\sf KT}}
\def\KHaus{{\sf KHaus}}
\def\KRFrm{{\sf KRFrm}}
\def\dev{{\sf DeV}}

\def\PBSp{{\sf PBSp}}

\def\cl{{\sf cl}}
\def\int{{\sf int}}

\newcommand{\Id}{\func{Id}}
\newcommand{\spec}{\mathcal{S}}

\begin{document}

\title{A Unified Approach to Gelfand and de Vries Dualities}

\author{G.~Bezhanishvili}
\address{New Mexico State University}
\email{guram@nmsu.edu}

\author{L.~Carai}
\address{Universit\`a degli Studi di Salerno}
\email{lcarai@unisa.it}

\author{P.~J.~Morandi}
\address{New Mexico State University}
\email{pmorandi@nmsu.edu}

\author{B.~Olberding}
\address{New Mexico State University}
\email{bruce@nmsu.edu}

\subjclass[2000]{06F25; 13J25; 54C30; 54E05; 06D22; 18F70}

\keywords{Compact Hausdorff space; bounded archimedean $\ell$-algebra; Dedekind completion; proximity; de Vries algebra; Specker algebra; Baer ring}
\date{}

\begin{abstract}
We develop a unified approach to Gelfand and de Vries dualities for compact Hausdorff spaces, which is based on appropriate modifications of the classic results of Dieudonn\'{e} (analysis), Dilworth (lattice theory), and Kat{\v{e}}tov-Tong (topology).
\end{abstract}

\maketitle

\tableofcontents

\section{Introduction}

Compact Hausdorff spaces enjoy several algebraic, analytic, and lattice-theoretic representations, which are at the heart of duality theory for the category $\KHaus$ of compact Hausdorff spaces and continuous maps. One of the oldest such is known under the name of Gelfand duality (see, e.g., \cite[Ch.~IV.4]{Joh82}), and can be presented in various signatures, depending on whether we work with real-valued or complex-valued functions (see, e.g., \cite{BMO13a} and the references therein). We will follow the standard practice in topology and work with continuous real-valued functions on $X\in\KHaus$. This gives rise to the lattice-ordered algebra $C(X)$ which is bounded (because $X$ is compact) and archimedean (because there are no infinitesimals in $\mathbb R$). In addition, $C(X)$ is uniformly complete in the norm topology. As a result, we arrive at the category $\bal$ of bounded archimedean $\ell$-algebras and (unital) $\ell$-algebra homomorphisms, and its reflective subcategory $\ubal$ consisting of uniformly complete objects in $\bal$. Gelfand duality then yields a dual adjunction between $\KHaus$ and $\bal$ which restricts to a dual equivalence between $\KHaus$ and $\ubal$ (see Theorem~\ref{thm: Gelfand}).

If instead of real-valued functions we work with regular open subsets of $X$, we arrive at de Vries duality \cite{deV62} between compact Hausdorff spaces and what later became known as de Vries algebras \cite{Bez10}. These are complete boolean algebras equipped with a binary relation that captures the proximity relation on the complete boolean algebra $\RO(X)$ of regular open subsets of $X$ given by $U\prec V$ iff $\cl(U)\subseteq V$. De Vries duality then yields a dual equivalence between $\KHaus$ and the category $\dev$ of de Vries algebras and de Vries morphisms (see Theorem~\ref{thm: de Vries}). 

Both de Vries and Gelfand dualities were generalized in several directions. In \cite{BMO19a, BMO20b} both dualities were extended to completely regular spaces and their compactifications. In \cite{DH18} Gelfand duality was generalized to the setting of compact ordered spaces studied by Nachbin \cite{Nac65}, and in \cite{DIT22} a general categorical framework was developed that yields de Vries duality and its generalizations. However, as far as we know, there is no unifying approach to Gelfand and de Vries dualities. Our aim is to develop such an approach, the key ingredients of which are based on appropriate modifications of classic results of Dieudonn\'{e}, Dilworth, and Kat{\v{e}}tov-Tong.

To begin, we can define a functor from $\bal$ to $\dev$ using the theory of annihilator ideals. 
We recall (see, e.g., \cite[Rem.~4.2(1)]{BCM21d}) that kernels of $\bal$-morphisms are archimedean $\ell$-ideals (see Definition~\ref{def: arch}); that is, $\ell$-ideals $I$ on $A\in\bal$ such that $A/I\in\bal$. If $A=C(X)$, these ideals correspond to open subsets of $X$. As we will see in Section~\ref{sec:ann}, regular opens of $X$ correspond to annihilator ideals of $C(X)$, and this gives rise to a covariant functor $\bal\to\dev$ which associates to each $A\in\bal$ the de Vries algebra of annihilator ideals of $A$. 

Going from $\dev$ to $\bal$ is less obvious, and will require several nontrivial steps. As the first step, we find an $\ell$-algebra that contains both $C(X)$ and $\RO(X)$. This is closely related to Dilworth's characterization \cite{Dil50} of the Dedekind completion of $C(X)$. Let $B(X)$ be the $\ell$-algebra of bounded real-valued functions on $X$. We recall (see, e.g., \cite[Sec.~2]{Dan15}) that the Baire operators on $B(X)$ are defined by
\[
f_*(x)=\sup_{U\in\mathcal N_x}\inf_{y\in U}f(y) \mbox{ and } f^*(x)=\inf_{U\in\mathcal N_x}\sup_{y\in U}f(y),
\]
where $f\in B(X)$, $x\in X$, and $\mathcal N_x$ is the family of open neighborhoods of $x$. A function $f\in B(X)$ is {\em lower-semicontinuous} if $f=f_*$ and {\em upper-semicontinuous} if $f=f^*$. We say that a lower-semicontinuous function $f$ is \emph{normal} if $f=(f^*)_*$. Let $N(X)$ be the set of normal functions on $X$. Then $N(X)$ is an $\ell$-algebra, where the $\ell$-algebra operations on $N(X)$ are normalizations of the $\ell$-algebra operations on $B(X)$ (see Remark~\ref{rem: N(X)}). Dilworth \cite{Dil50} proved that if we view $C(X)$ and $N(X)$ as lattices, then $N(X)$ is isomorphic to the Dedekind completion of $C(X)$. Later D{\u{a}}ne{\c{t}} \cite{Dan15} showed that $N(X)$ remains isomorphic to the Dedekind completion of $C(X)$ in the richer signature of vector lattices, and it follows from \cite[Sec.~8]{BMO16} that this also remains true in the signature of $\ell$-algebras. Thus, we can phrase a strengthened version of Dilworth's theorem as follows:

\begin{theorem} [Dilworth's theorem]
If $X\in\KHaus$, then $N(X)$ is isomorphic to the Dedekind completion of $C(X)$ in $\bal$.
\end{theorem}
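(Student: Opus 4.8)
The plan is to verify directly that the inclusion $C(X) \hookrightarrow N(X)$ exhibits $N(X)$ as the Dedekind completion of $C(X)$ in $\bal$. Recall that the Dedekind completion of $A \in \bal$ is a Dedekind-complete $\bar{A} \in \bal$ together with a join- and meet-dense $\bal$-embedding $A \hookrightarrow \bar{A}$; such a completion is unique up to isomorphism fixing $A$. Thus it suffices to establish three things: that $N(X)$ is a bounded archimedean $\ell$-algebra; that the inclusion $C(X) \hookrightarrow N(X)$ is a $\bal$-morphism which is join- and meet-dense; and that $N(X)$ is Dedekind complete.

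First I would treat the order-theoretic content, which is essentially Dilworth's original theorem. A continuous function $f$ satisfies $f = f_* = f^*$, hence $(f^*)_* = f$, so $C(X) \subseteq N(X)$ and the inclusion is a lattice embedding whose image consists of functions on which normalization acts as the identity. For Dedekind completeness and density, the key lemma---valid because $X$, being compact Hausdorff, is normal---is that every bounded lower-semicontinuous function is the pointwise supremum of the continuous functions below it, and dually every bounded upper-semicontinuous function is the pointwise infimum of the continuous functions above it. Given $f \in N(X)$, the join in $N(X)$ is the normalization of the pointwise supremum, so the first half of the lemma gives join-density $f = \bigvee\{g \in C(X): g \le f\}$; the dual computation together with the identities $\{h \in C(X) : h \ge f\} = \{h \in C(X) : h \ge f^*\}$ and $(f^*)_* = f$ gives meet-density $f = \bigwedge\{h \in C(X): f \le h\}$. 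Dedekind completeness follows because the normalization of the pointwise supremum (respectively infimum) of any bounded family of normal functions is again normal and is its least upper bound (respectively greatest lower bound) in $N(X)$.

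Next I would install the algebraic structure. As recorded in Remark~\ref{rem: N(X)}, the $\ell$-algebra operations on $N(X)$ are the normalizations of the pointwise operations inherited from $B(X)$; one checks these are well-defined on normal functions, that $N(X)$ is bounded and archimedean (both properties being inherited from $\mathbb{R}$ via $B(X)$), and that they restrict on $C(X)$ to the usual pointwise operations, so that the inclusion is a $\bal$-morphism. For addition and scalar multiplication this is precisely D{\u{a}}ne{\c{t}}'s theorem that $N(X)$ is the Dedekind completion of $C(X)$ as a vector lattice, so the only genuinely new ingredient is multiplication.

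The main obstacle is therefore the multiplicative structure. I expect the bulk of the work to lie in verifying that the normalized product is associative and distributes over the normalized addition and lattice operations---normalization does not obviously commute with multiplication, so each algebra axiom must be checked against the interaction between the Baire operators and pointwise products---and in confirming that multiplication by a positive normal function is order-preserving and compatible with the suprema computed above. Once this is in place, the product on $N(X)$ is forced: on positive elements it must agree, by join-density, with the join of the products of the continuous functions below the two factors, so any two bounded archimedean $\ell$-algebra structures on the Dedekind completion that extend that of $C(X)$ must coincide. Existence of such a structure is supplied by \cite[Sec.~8]{BMO16}; together with the order-theoretic completion established above, this identifies $N(X)$ with the Dedekind completion of $C(X)$ in $\bal$.
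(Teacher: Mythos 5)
Your proposal is correct in outline, but note that the paper itself offers no argument for this theorem at all: it is presented purely as an assembly of citations---Dilworth \cite{Dil50} for the lattice signature, D{\u{a}}ne{\c{t}} \cite{Dan15} for the vector-lattice signature, and \cite[Sec.~8]{BMO16} for the $\ell$-algebra signature (restated later as a citation of \cite[Prop.~4.7 and Rem.~4.9]{BMO16}). Where you differ is that you make the order-theoretic core self-contained: you prove join- and meet-density and Dedekind completeness directly from the insertion-type lemma that, on a compact Hausdorff (hence T4) space, every bounded lower-semicontinuous function is the pointwise supremum of the continuous functions below it. Your Baire-operator computations there check out: the meet-density step works because the pointwise infimum of the continuous majorants of $f$ is the upper-semicontinuous function $f^*$, and $u_*$ is normal whenever $u$ is upper-semicontinuous, so the meet in $N(X)$ is $(f^*)_* = f$; likewise the normalization of a pointwise supremum of normal functions is the least normal upper bound. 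This direct route is pleasantly aligned with the paper's own theme, since the insertion lemma is exactly the Kat{\v{e}}tov-Tong mechanism the paper exploits to recover $C(X)$ from $N(X)$. However, at the genuinely hard point---verifying that the normalized operations make $N(X)$ a bounded archimedean $\ell$-algebra, which you rightly flag as ``the bulk of the work''---you fall back on the very citation \cite[Sec.~8]{BMO16} that the paper uses, so neither treatment is self-contained at the crux; what your version buys is a transparent proof of the Dilworth/D{\u{a}}ne{\c{t}} order-theoretic content and a correct isolation of multiplication as the only new ingredient. One reference you should add to make your uniqueness remark airtight: the claim that the product on positive elements is forced by join-density presupposes that multiplication by a nonnegative element preserves arbitrary existing joins in a Dedekind complete archimedean $\ell$-algebra; this is Johnson's lemma \cite[Lem.~1]{Joh65}, which the paper invokes for exactly this purpose in the proof of Lemma~\ref{lem: Dedekind is Baer}.
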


We can recover $C(X)$ from $N(X)$ by utilizing the celebrated Kat{\v{e}}tov-Tong theorem in topology.

\begin{theorem} [Kat{\v{e}}tov-Tong]
Let $X$ be a normal space and $f,g\in B(X)$ such that $f^* \le g_*$. Then there is $h\in C(X)$ with $f^*\le h\le g_*$.
\end{theorem}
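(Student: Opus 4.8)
The plan is to reduce the statement to the classical form of the insertion theorem and then construct $h$ by a Urysohn-type scale built from the normality of $X$. Write $u=f^*$ and $l=g_*$. By the definition of the Baire operators, $u$ is upper-semicontinuous and $l$ is lower-semicontinuous, both are bounded (since $f,g\in B(X)$), and the hypothesis is exactly $u\le l$. So the goal becomes: given an upper-semicontinuous $u$ and a lower-semicontinuous $l$ on a normal space with $u\le l$, produce $h\in C(X)$ with $u\le h\le l$.

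The engine is the following consequence of $u\le l$: for rationals $p<q$ the set $\{l\le p\}$ is closed (as $l$ is lower-semicontinuous), the set $\{u<q\}$ is open (as $u$ is upper-semicontinuous), and $u\le l$ gives $\{l\le p\}\subseteq\{u<q\}$. Normality then lets me insert open sets with controlled closures between these. Concretely, I would construct a family $\{U_r\}_{r\in\mathbb{Q}}$ of open subsets of $X$ with $\cl(U_p)\subseteq U_q$ for $p<q$, and such that each $U_r$ is trapped between the relevant sublevel sets of $l$ and of $u$, in the sense that $\{l\le p\}\subseteq U_q$ and $\cl(U_p)\subseteq\{u<q\}$ whenever $p<q$. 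Setting $h(x)=\inf\{r:x\in U_r\}$, the nesting $\cl(U_p)\subseteq U_q$ yields, by the standard argument behind Urysohn's lemma, that both $\{h<a\}=\bigcup_{r<a}U_r$ and $\{h>a\}$ are open, so $h$ is continuous; the lower trapping gives $h\le l$ and the upper trapping gives $u\le h$, the two inequalities emerging as non-strict bounds precisely because $h$ is defined as an infimum.

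The main obstacle is the recursive construction of the scale $\{U_r\}$ over a dense index set. Enumerating the rationals and inserting $U_r$ one at a time between its current predecessor $p$ and successor $s$, I must at each step exhibit an open set whose closure lies in a prescribed open set and which contains a prescribed closed set; the delicate point is that the upper constraint coming from the upper-semicontinuous $u$ and the lower constraint coming from the lower-semicontinuous $l$ must remain simultaneously satisfiable as the index set is refined. This is exactly where normality is essential, and it is the crux of the Kat\v{e}tov--Tong theorem: the inclusions $\{l\le p\}\subseteq\{u<q\}$ for $p<q$ supply the room needed for each insertion, and one must verify that the gap built into these inclusions survives the passage to a dense set of levels. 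An equivalent route, which makes the convergence explicit, is to build the approximation as a uniformly Cauchy sequence of continuous functions, each obtained by applying Urysohn's lemma to the disjoint closed sets $\{u\ge t\}$ and $\{l\le s\}$ (disjoint since for $s<t$ a common point would force $t\le u\le l\le s$); assembling these layers coherently so that the sequence converges uniformly is then the technical heart of the argument.
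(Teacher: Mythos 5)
The paper does not prove this statement at all: the Kat\v{e}tov--Tong theorem is quoted there as a classical background result, so your attempt can only be measured against the standard proofs in the literature, and by that measure it has a genuine gap --- precisely at the step you yourself label the ``crux'' and defer. Write $u=f^*$ and $l=g_*$. The trapping conditions you impose, $\{l\le p\}\subseteq U_q$ and $\cl(U_p)\subseteq\{u<q\}$ for all rationals $p<q$, accumulate on each fixed level $r$ to the two requirements $\{l<r\}\subseteq U_r$ and $\cl(U_r)\subseteq\{u\le r\}$. But $\{l<r\}=\bigcup_{p<r}\{l\le p\}$ is only an $F_\sigma$ set and $\{u\le r\}=\bigcap_{q>r}\{u<q\}$ is only a $G_\delta$ set, so the insertion demanded at each level is \emph{not} a closed-set-inside-open-set separation, and normality does not deliver it one step at a time. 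Nor can you evade this by imposing, when $U_r$ is chosen, only the finitely many constraints coming from previously enumerated rationals: the lower trapping survives such a scheme (a density argument still yields $h\le l$), but the upper trapping does not --- the earliest inserted levels carry no upper constraints at all, so a point $x$ with $u(x)$ large can land in some early $U_r$ with $r$ small, destroying $u\le h$. This obstruction is not a technicality to be checked but the actual mathematical content of the theorem; it is essentially why Kat\v{e}tov's original argument in this style contained an error that had to be repaired, and why the accepted proofs do not proceed by a single Urysohn-type scale.

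Your fallback route --- a uniformly Cauchy sequence of continuous functions built from Urysohn functions separating the closed sets $\{u\ge t\}$ and $\{l\le s\}$ --- is indeed the skeleton of Tong's proof, but your proposal only names it: there is no construction of the sequence, no gap-halving or $2^{-n}$-type estimate establishing uniform convergence, and no verification that the exact (non-strict) inequalities $u\le h\le l$ survive the passage to the limit. A completion along these lines must maintain a sandwich invariant of the form $u\le g_n+\varepsilon_n$ and $g_n-\varepsilon_n\le l$ with $\sum\varepsilon_n<\infty$, replacing the pair $(u,l)$ at each stage by a corrected pair that is still upper/lower-semicontinuous, and then pass to $h=\lim g_n$, which is continuous by uniform convergence; none of this is present. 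As it stands, the proposal correctly reduces the statement to the insertion problem for a semicontinuous pair and correctly identifies where normality enters, but it is a proof plan whose central argument --- by your own admission ``the technical heart'' --- is missing.
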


Since each compact Hausdorff space is normal, the Kat{\v{e}}tov-Tong theorem is available in our context. Thus, we can define a proximity relation $\lhd$ on $N(X)$ by setting $f\lhd g$ iff $f^*\le g$ (note that $g = g_*$), and use the Kat{\v{e}}tov-Tong theorem to recover $C(X)$ as the $\ell$-algebra of reflexive elements. Because of this connection, we call such proximity relations on $N(X)$ {\em Kat{\v{e}}tov-Tong proximities} or {\em KT-proximities} for short (see Definition~\ref{def: proximity dedekind algebra}). 

To connect $N(X)$ to $\RO(X)$, we point out that the idempotents of the ring $N(X)$ are exactly the characteristic functions of the regular open subsets of $X$. Consequently, both $C(X)$ and $\RO(X)$ live inside $N(X)$. Namely, $C(X)$ is the $\ell$-algebra of reflexive elements of the KT-proximity on $N(X)$ while the idempotents of $N(X)$ are the characteristic functions arising from $\RO(X)$. 

It is natural to consider the $\ell$-subalgebra of $N(X)$ generated by its idempotents. Such algebras are related to the theory of the Baer-Specker group and its subgroups (see \cite{BMO20e} and the references therein). Because of this, they were named {\em Specker algebras} in \cite{BMO13a}. As follows from \cite{BMMO15b}, the Specker subalgebra of $N(X)$ is exactly the $\ell$-algebra $FN(X)$ of finitely-valued normal functions on $X$. Moreover, the de Vries proximity on $\RO(X)$ lifts to a proximity on $FN(X)$. Furthermore, $N(X)$ is the Dedekind completion of $FN(X)$, and there is a natural lift of the proximity on $FN(X)$ to $N(X)$. The last step is to show that this lift coincides with the KT-proximity on $N(X)$. This requires Dieudonn\'{e}'s lemma, which is our last ingredient. This lemma is more of a proof-technique which originates in \cite{Die44}, and was used by various authors in different contexts (see, e.g., \cite{Edw66,BS75,Lan76,BMO20c}). We will require it in the following form:

\begin{theorem} [Dieudonn\'{e}'s lemma]
Let $X \in \KHaus$ and $\lhd$ be a proximity on $FN(X)$. Then the closure of $\lhd$ is a KT-proximity on $N(X)$.
\end{theorem}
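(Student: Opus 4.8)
The plan is to show that the closure $\overline{\lhd}$ of $\lhd$ satisfies every condition in the definition of a KT-proximity (Definition~\ref{def: proximity dedekind algebra}). The ambient fact I would lean on throughout is that $N(X)$ is the Dedekind completion of $FN(X)$, so $FN(X)$ is both join-dense and meet-dense in $N(X)$: every $u\in N(X)$ is $\bigvee\{a\in FN(X):a\le u\}$ and $\bigwedge\{b\in FN(X):u\le b\}$. This is what lets me pass between a statement about $\overline{\lhd}$ on $N(X)$ and the given proximity $\lhd$ on $FN(X)$, by sandwiching elements of $N(X)$ between finitely-valued normal functions.

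First I would clear away the routine axioms. That $\overline{\lhd}$ is contained in $\le$, is stable under composing with $\le$ on either side, interacts correctly with $\vee$ and $\wedge$, reverses under negation, contains the prescribed relations involving $0$ and $1$, and is round should all descend from the corresponding axioms for $\lhd$ together with density; the only bookkeeping is to combine the finitely many $FN(X)$-witnesses produced by the approximations, which is exactly what the meet/join axioms of $\lhd$ permit. The compatibility of $\overline{\lhd}$ with the $\ell$-algebra operations is handled in the same spirit, approximating the operands from inside $FN(X)$ and invoking monotonicity of the normalized operations on $N(X)$ together with the algebraic axioms satisfied by $\lhd$. None of these steps should require more than density plus the hypotheses on $\lhd$.

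The real content is the Kat\v{e}tov--Tong insertion axiom: whenever $u\mathrel{\overline{\lhd}}v$, there is a reflexive element $c\in N(X)$ (one with $c\mathrel{\overline{\lhd}}c$) such that $u\mathrel{\overline{\lhd}}c\mathrel{\overline{\lhd}}v$. This is where Dieudonn\'e's technique is indispensable, and where a single proximal step in $FN(X)$ does not suffice, since by itself it can only yield finitely-valued interpolants. Instead I would iterate the interpolation axiom for $\lhd$ to build a two-sided dense scale: beginning from a step $a\lhd b$ in $FN(X)$ extracted from $u\mathrel{\overline{\lhd}}v$, I would construct a family $\{a_q\}_{q\in\mathbb{Q}\cap(0,1)}\subseteq FN(X)$ with $a_p\lhd a_q$ whenever $p<q$, all lying between $a$ and $b$. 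The interpolant $c$ is then obtained from the scale by an appropriate supremum and infimum inside the Dedekind-complete lattice $N(X)$ --- the algebraic incarnation of the Urysohn function built from an increasing family of ``open sets'' whose closures nest.

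I expect the main obstacle to be the reflexivity of $c$, that is, the verification that $c\mathrel{\overline{\lhd}}c$; this is the heart of Dieudonn\'e's lemma. The idea is that each rung of the scale is \emph{proximal} to the next ($a_p\lhd a_q$), not merely below it, which mimics the condition $\cl(U_p)\subseteq U_q$ and forces the upper and lower regularizations of the limit $c$ to coincide, so that $c$ behaves like a continuous function and is therefore reflexive for $\overline{\lhd}$; compactness (equivalently, normality) of $X$ is what guarantees that the two one-sided limits actually meet. Once reflexivity of $c$ is in hand, the relations $u\mathrel{\overline{\lhd}}c$ and $c\mathrel{\overline{\lhd}}v$ fall out of the scale inequalities and the definition of the closure, and the insertion axiom is proved. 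Together with the routine axioms this yields that $\overline{\lhd}$ is a KT-proximity. The delicate point to get right is the translation of the proximal density of the scale into the equality of the two regularizations of $c$, which is precisely the step that Dieudonn\'e's argument is designed to carry out.
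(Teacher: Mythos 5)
There is a genuine gap, and it lies exactly where you predicted the difficulty would be: the reflexivity of your interpolant $c$. The root cause is a mismatch of topologies. The closure $\overline{\lhd}$ is taken in the product of the \emph{norm} topologies on $N(X) \times N(X)$, so $c \mathrel{\overline{\lhd}} c$ demands sequences $\{c_n\}, \{d_n\}$ in $FN(X)$ with $c_n \lhd d_n$ and both converging to $c$ \emph{uniformly}. Your Urysohn-type scale $\{a_q\}_{q\in\mathbb{Q}\cap(0,1)}$ produces $c$ as a Dedekind supremum/infimum, i.e.\ an order limit; order limits in $N(X)$ are not norm limits (the join in $N(X)$ is the normalization $((\,\cdot\,)^*)_*$ of the pointwise sup), and nothing in the scale forces $\|a_q - c\| \to 0$. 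So the scale supplies no $\lhd$-pairs norm-converging to $(c,c)$, and reflexivity simply does not follow. The paper's proof of Theorem~\ref{thm: Dieudonne} is engineered precisely around this obstacle: after rescaling into $[0,1]$ and passing to subsequences with $\|a_n - a_{n+1}\|, \|b_n - b_{n+1}\| \le 1/2^n$, it builds interpolants inductively subject to the corridor constraints $a_n \lhd c_n \lhd b_n$ and $c_n - 1/2^n \lhd c_{n+1} \lhd c_n + 1/2^n$; the corridors make $\{c_n\}$ norm-Cauchy, and $c_n - 1/2^n \lhd c_{n+1}$, with both sides converging uniformly to $c = \lim c_n$, exhibits $(c,c) \in \overline{\lhd}$ directly. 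Relatedly, you lean throughout on join/meet density of $FN(X)$ in $N(X)$, but what is actually required — and is itself a nontrivial ingredient proved via the Baer property (Corollary~\ref{cor: S dense in D}) — is \emph{uniform} density; order density alone does not even let you translate between $\overline{\lhd}$-pairs in $N(X)$ and approximating $\lhd$-pairs in $FN(X)$.

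Your fallback argument for reflexivity — that proximal density of the scale forces the upper and lower Baire regularizations of $c$ to coincide, so $c$ ``behaves like a continuous function and is therefore reflexive,'' with normality of $X$ closing the gap — is incorrect for the statement as posed. The hypothesis allows an \emph{arbitrary} proximity $\lhd$ on $FN(X)$, and the reflexive elements of $\overline{\lhd}$ form some uniformly closed essential $\ell$-subalgebra of $N(X)$ that in general is \emph{not} $C(X)$: the restriction of $\lhd$ to $\Id(FN(X)) \cong \RO(X)$ can be any de Vries proximity on that complete boolean algebra, so the reflexive algebra is a copy of $C(Y)$ for a space $Y$ merely co-absolute with $X$. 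Hence continuity of $c$ on $X$ neither implies nor is implied by $c \mathrel{\overline{\lhd}} c$, and the argument must run purely through $\lhd$, as the paper's Cauchy construction does. A smaller inaccuracy: roundness (P9) of $\overline{\lhd}$ is not ``routine descent by density'' either — interpolants of the approximating pairs $a_n \lhd c_n \lhd b_n$ need not converge — and in the paper it is obtained only as a byproduct of the same hard step, via the characterization that $f \mathrel{\overline{\lhd}} g$ iff there is a reflexive $c$ with $f \le c \le g$, after which essentiality of the reflexive algebra $A$ yields $\overline{\lhd} = \lhd_A$ and all the axioms, including (P10) and (KT), at once.
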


It is this lemma that allows us to show that the lift of the proximity on $FN(X)$ to $N(X)$ is the KT-proximity on $N(X)$. Thus, we can go from $\RO(X)$ to $N(X)$ through the Specker algebra $FN(X)$. Since the boolean algebra of idempotents of $FN(X)$ is isomorphic to the complete boolean algebra $\RO(X)$, we have that $FN(X)$ is a Baer ring (see Section~\ref{sec: pda and dev}). We first lift the de Vries proximity on $\RO(X)$ to a proximity on the Baer-Specker algebra $FN(X)$, and then use Dieudonn\'{e}'s lemma to show that the lift of the proximity on $FN(X)$ is the KT-proximity on $N(X)$. Moreover, $C(X)$ can be recovered as the reflexive elements of the KT-proximity on $N(X)$. As a result, we arrive at the following diagram, which commutes up to natural isomorphism (see Section~\ref{sec: summary}):

\[
\begin{tikzcd}
\ubal \arrow[dd] && \cpda \arrow[ll] \\
& \KHaus \arrow[ul, "C"'] \arrow[ur, "N"] \arrow[dl, "\RO"] \arrow[dr, "FN"'] & \\
\dev \arrow[rr] && \PBSp \arrow[uu] 
\end{tikzcd}
\]
Here $\cpda$ is the category of what we term Kat{\v{e}}tov-Tong algebras; that is, Dedekind algebras equipped with a KT-proximity that is closed in the product topology (see Definition~\ref{def: category KT}). Also, $\PBSp$ is the category of proximity Baer-Specker algebras of \cite{BMMO15b} (see Section~\ref{sec: proximity Baer-Specker}). Each of the four categories $\ubal$, $\dev$, $\PBSp$, and $\cpda$ is dually equivalent to $\KHaus$.
That $\KHaus$ is dually equivalent to $\ubal$ is Gelfand duality, and that $\KHaus$ is dually equivalent to $\dev$ is de Vries duality. The dual equivalence of $\KHaus$ and $\PBSp$ is established in \cite{BMMO15b}, and the dual equivalence of $\KHaus$ and $\cpda$ in \cite{BMO16}. Consequently, the four categories $\ubal$, $\dev$, $\PBSp$, and $\cpda$ are equivalent. However, these equivalences are obtained by utilizing duality theory for $\KHaus$, and hence require, among other things, the use of the axiom of choice. We give a direct and choice-free proof of each of these four equivalences. 

In Section~\ref{sec:ann} we describe the functor $\ann : \ubal \to \dev$ which associates with each $A\in\ubal$ the de Vries algebra of annihilator ideals of $A$. In Section~\ref{sec: Dieudonne} we prove that $\ubal$ is equivalent to $\cpda$. This is done by first establishing an appropriate version of Dieudonn\'{e}'s lemma, which is our first main result. In Section~\ref{sec: pda and dev} we define the functor $\Id : \cpda \to \dev$ which associates with each KT-algebra the de Vries algebra of its idempotents, and in Section~\ref{sec: proximity Baer-Specker} we describe the functors establishing an equivalence between $\dev$ and $\PBSp$. Finally, in Section~\ref{sec:BS and KT} we prove that $\cpda$ is equivalent to both $\dev$ and $\PBSp$, which is our second main result. This completes our proof that the four categories in the diagram are equivalent, thus yielding a unified approach to Gelfand and de Vries dualities.

Establishing these category equivalences requires a number of intricate arguments, many of which are given in the course of the article, while others 
are cited from some of our previous articles. One of the lengthier and most technical arguments is a proof that weak proximity morphisms between proximity Baer-Specker algebras are in fact proximity morphisms.  We  have placed this proof in an appendix since although this lemma is essential for us, the sequence of ideas used in proving it is not  needed to follow the main ideas.

\section{Gelfand and de Vries dualities}

As we saw in the introduction, with each $X\in\KHaus$ we can associate the $\ell$-algebra $C(X)$ of continuous real-valued functions on $X$ and the de Vries algebra  $\RO(X)$ of regular open subsets of $X$. The first approach leads to Gelfand duality and the second to de Vries duality. In this section we briefly recall these dualities.

We start with Gelfand duality. All algebras we will consider are commutative and unital (have 1). With respect to pointwise operations, $C(X)$ is a lattice-ordered algebra or an $\ell$-algebra for short, where we recall that $A$ is an {\em $\ell$-algebra} if $A$ is an $\mathbb R$-algebra and a lattice such that for all $a,b,c\in A$ and $r\in\mathbb R$ we have:
\begin{itemize}
\item $a\le b$ implies $a+c\le b+c$;
\item $0\le a$ and $0\le b$ imply $0\le ab$; 
\item $0\le a$ and $0\le r$ imply $0\le r \cdot a$.
\end{itemize}
Moreover, since $X$ is compact, $C(X)$ is bounded, and since $\mathbb R$ has no infinitesimals, $C(X)$ is archimedean, where we recall that an $\ell$-algebra $A$ is
\begin{itemize}
\item \emph{bounded} if for each $a \in A$ there is an integer $n \ge 1$ such that $a \le n\cdot 1$ (that is, $1$ is a \emph{strong order unit}); and
\item it is \emph{archimedean} if for each $a, b \in A$, whenever $n\cdot a \le b$ for each $n \ge 1$, then $a \le 0$.
\end{itemize}
This motivates the following definition (see \cite[Sec.~2]{BMO13a}):

\begin{definition}\label{def: bal}
A \emph{$\bal$-algebra} is a bounded archimedean $\ell$-algebra and a $\bal$-morphism is a unital $\ell$-algebra homomorphism. Let $\bal$ be the category of $\bal$-algebras and $\bal$-morphisms.
\end{definition}

Let $A\in\bal$. Since $A$ is a bounded $\ell$-algebra, it is an $f$-ring (see, e.g., \cite[Lem.~XVII.5.2]{Bir79}), meaning that if $0 \le a, b, c \in A$ with $a \wedge b = 0$, then $(ac) \wedge b = 0$. For each $a\in A$ we can define the \emph{positive} and \emph{negative} parts of $a$ by
\[
a^+ = a \vee 0 \textrm{ and }  a^- = (-a) \vee 0 = -(a \wedge 0).
\]
Then $a = a^+ - a^-$ and $a^+ \wedge a^- = a^+  a^- = 0$ (see, e.g., \cite[Eqn.~XIII.3(15), Thm.~XIII.4.7, Lem.~XVII.5.1]{Bir79}). We define the \emph{absolute value} of $a$ by
\[
|a|=a\vee(-a),
\]
and the \emph{norm} of $a$ by
\[
||a||=\inf\{r\in\mathbb R : |a|\le r\}.\footnote{When $A \ne 0$ we view $\mathbb{R}$ as an $\ell$-subalgebra of $A$ by identifying $r \in \mathbb{R}$ with $r \cdot 1 \in A$.}
\]

\begin{definition}
We call $A$ \emph{uniformly complete} if the norm is complete. Let $\ubal$ be the full subcategory of $\bal$ consisting of uniformly complete objects.
\end{definition}

It is easy to see that if $X\in\KHaus$, then $C(X)\in\ubal$, where for $f\in C(X)$ we have
\[
||f||=\sup\{ |f(x)| : x\in X \}.
\]
This defines a contravariant functor $C:\KHaus \to \ubal$ which associates with each $X\in\KHaus$ the $\ell$-algebra $C(X)$ of (necessarily bounded) continuous real-valued functions on $X$; and with each continuous map $\varphi:X\to Y$ the $\ell$-algebra homomorphism $C(\varphi):C(Y)\to C(X)$ given by $C(\varphi)(f)=f\circ\varphi$ for each $f\in C(Y)$. 

To define the contravariant functor $\bal \to \KHaus$, we recall the notion of an {\em $\ell$-ideal}; that is, an ideal $I$ of $A\in\bal$ such that $|a|\le|b|$ and $b\in I$ imply $a\in I$. The {\em Yosida space} $Y(A)$ of $A\in\bal$ is the set of maximal $\ell$-ideals of $A$ whose closed sets are exactly sets of the form
\[
Z_\ell(I) = \{M\in Y(A): I\subseteq M\},
\]
where $I$ is an $\ell$-ideal of $A$. It is well known that $Y(A)\in\KHaus$. This defines a contravariant functor $Y:\bal\to\KHaus$ which sends $A\in\bal$ to its Yosida space $Y(A)$, and a $\bal$-morphism $\alpha:A\to A'$ to $Y(\alpha)=\alpha^{-1}:Y(A')\to Y(A)$.

The functors $C$ and $Y$ yield a dual adjunction between $\KHaus$ and $\bal$.
Moreover, for $X\in\KHaus$ we have that $\varepsilon_X: X\to Y(C(X))$
is a homeomorphism, where
\[
\varepsilon_X(x)=\{f\in C(X) : f(x)=0\}.
\]
Furthermore, for $A\in\bal$ and $M$ a maximal $\ell$-ideal of $A$, it is well known (see, e.g., \cite[Cor.~27]{HJ61}) that $A/M \cong \mathbb{R}$. Therefore, we can define $\zeta_A :A\to C(Y(A))$ by $\zeta_A(a)(M)=r$ where $r$ is the unique real number satisfying $a+M=r+M$. Then $\zeta_A$ is a monomorphism in $\bal$ separating points of $Y(A)$. Thus, by the Stone-Weierstrass theorem, we have that if $A$ is uniformly complete, then $\zeta_A$ is an isomorphism.
Consequently, the dual adjunction restricts to a dual equivalence between $\ubal$ and $\KHaus$, yielding Gelfand duality:

\begin{theorem} [Gelfand duality \cite{GN43,Sto40}] \label{thm: GD} \label{thm: Gelfand}
The contravariant functors $C$ and $Y$ yield a dual adjunction between $\KHaus$ and $\bal$ which restricts to a dual equivalence between $\KHaus$ and $\ubal$.
\end{theorem}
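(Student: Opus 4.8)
The plan is to exhibit the two natural transformations witnessing the dual adjunction and then isolate the two places where the restriction to a dual equivalence is forced. Taking $\varepsilon_X$ and $\zeta_A$ as defined above, the first task is to check that they assemble into natural transformations $\varepsilon \colon \Id_{\KHaus} \to Y \circ C$ and $\zeta \colon \Id_{\bal} \to C \circ Y$. Naturality of $\varepsilon$ amounts to verifying, for a continuous map $\varphi \colon X \to X'$, that the square relating $\varepsilon_X$, $\varepsilon_{X'}$, and $Y(C(\varphi))$ commutes; this is a direct unwinding of definitions, since $Y(C(\varphi)) = C(\varphi)^{-1}$ pulls back the evaluation ideal at $\varphi(x)$ to the evaluation ideal at $x$. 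Naturality of $\zeta$ is analogous. With naturality in hand, I would establish the dual adjunction by checking the two triangular identities $Y(\zeta_A) \circ \varepsilon_{Y(A)} = \Id_{Y(A)}$ and $C(\varepsilon_X) \circ \zeta_{C(X)} = \Id_{C(X)}$, both of which reduce to routine computations once the relevant maximal $\ell$-ideals are identified.

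The first substantive step is that $\varepsilon_X$ is a homeomorphism for every $X \in \KHaus$. Injectivity follows from the fact that $C(X)$ separates points of a compact Hausdorff space, so distinct points give distinct evaluation ideals. The key point is surjectivity: every maximal $\ell$-ideal $M$ of $C(X)$ equals $\{f : f(x) = 0\}$ for some $x \in X$. I would argue by contradiction using compactness: if the functions in $M$ had no common zero, then for each $x$ some $f_x \in M$ is nonzero at $x$, so $f_x^2 \in M$ is positive on a neighborhood of $x$; a finite subcover yields a sum $g \in M$ bounded below by a positive constant, hence a unit, contradicting $M \ne C(X)$. Continuity and the closed-map property then follow by matching the zero-set topology on $Y(C(X))$ with that of $X$, after which $\varepsilon_X$ is a continuous bijection between compact Hausdorff spaces and therefore a homeomorphism.

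The second, and main, obstacle is to show that $\zeta_A$ is an isomorphism precisely when $A$ is uniformly complete, which pins down $\ubal$ as the correct subcategory. That $\zeta_A$ is a $\bal$-monomorphism separating the points of $Y(A)$ follows from the definition of the Yosida space together with the identification $A/M \cong \mathbb{R}$. For surjectivity I would invoke the Stone--Weierstrass theorem: the image $\zeta_A(A)$ is a point-separating $\ell$-subalgebra of $C(Y(A))$ containing the constants, hence uniformly dense. When $A$ is uniformly complete, $\zeta_A$ is an isometry onto a complete, and therefore closed, subalgebra; combining closedness with density gives surjectivity, so $\zeta_A$ is an isomorphism. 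Conversely, since $C(Y(A))$ is always uniformly complete, $\zeta_A$ can be an isomorphism only if $A$ is uniformly complete, confirming that the dual adjunction restricts to a dual equivalence exactly between $\KHaus$ and $\ubal$.

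The delicate point throughout is the interplay between the order-theoretic ($\ell$-ideal) and algebraic (maximal ideal) descriptions of the Yosida space, together with verifying that the norm on $A$ is genuinely transported to the sup-norm on $\zeta_A(A)$, so that the completeness argument in the last step legitimately applies. I expect this norm-transport verification, rather than the formal adjunction bookkeeping, to be where the real content lies.
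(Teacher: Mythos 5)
Your proposal is correct and follows essentially the same route as the paper, which itself only sketches the classical argument before citing \cite{GN43,Sto40}: define $\varepsilon_X$ and $\zeta_A$, show $\varepsilon_X$ is a homeomorphism, use $A/M \cong \mathbb{R}$ to get $\zeta_A$ as a point-separating monomorphism, and invoke Stone--Weierstrass together with uniform completeness for surjectivity. The details you supply beyond the paper's sketch --- the compactness argument for surjectivity of $\varepsilon_X$, the triangular identities, and the isometry-plus-closed-image step (whose norm-transport subtlety you rightly flag, and which follows from archimedeanness via $\bigcap Y(A) = 0$) --- are the standard ones and are sound.
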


We next turn to de Vries duality \cite{deV62}. For a boolean algebra $B$ and $a \in B$ we write $a^*$ for the complement of $a$ in $B$. A \emph{de Vries algebra} is a pair $B = (B,\prec)$ consisting of a complete boolean algebra $B$ together with a binary relation $\prec$ satisfying

\begin{enumerate} \label{DV properties}
 \item[(DV1)] $1\prec 1$;
 \item[(DV2)] $a\prec b$ implies $a\leq b$;
 \item[(DV3)] $a\leq b\prec c\leq d$ implies $a\prec d$;
 \item[(DV4)] $a\prec b,c$ implies $a\prec b\wedge c$;
 \item[(DV5)] $a\prec b$ implies $b^*\prec a^*$;
 \item[(DV6)] $a\prec b$ implies there is $c\in A$ with $a\prec c\prec b$;
 \item[(DV7)] $a\neq 0$ implies there is $b\neq 0$ with $b\prec a$.
\end{enumerate}

Given two de Vries algebras $B$ and $B'$, a \emph{de Vries morphism} is a map $\sigma : B \to B'$ satisfying 
\begin{enumerate}
 \item[(M1)] $\sigma(0)=0$;
 \item[(M2)] $\sigma(a\wedge b)=\sigma(a)\wedge\sigma(b)$;
 \item[(M3)] $a\prec b$ implies $\sigma(a^*)^*\prec\sigma(b)$;
 \item[(M4)] $\sigma(a)=\bigvee\{\sigma(b) : b\prec a\}$.
\end{enumerate}
For two de Vries morphisms $\sigma_1: B_1 \to B_2$ and $\sigma_2 : B_2 \to B_3$, the composition is given by 
\[
(\sigma_2\star\sigma_1)(a)=\bigvee\{\sigma_2\sigma_1(b) : b\prec a\}.
\] 

\begin{definition}
Let $\dev$ be the category of de Vries algebras and de Vries morphisms.
\end{definition}

Typical examples of de Vries algebras are the complete boolean algebras $\RO(X)$ of regular open subsets of $X\in\KHaus$ equipped with the binary relation $\prec$ given by 
\[
U\prec V \mbox{ iff } \cl(U)\subseteq V.
\]
Also, typical examples of de Vries morphisms are the maps $\RO(\varphi):\RO(Y)\to\RO(X)$ where $\varphi:X\to Y$ is a continuous map between compact Hausdorff spaces and 
\[
\RO(\varphi)(U)=\int(\cl\,\varphi^{-1}(U))
\]
for each $U\in\RO(Y)$. This defines a contravariant functor $\RO:\KHaus\to\dev$. 

To define a contravariant functor $\dev\to\KHaus$ we recall the notions of round filters and ends. Let $(B, \prec) \in\dev$.
For $S\subseteq B$ let 
\[
{\twoheaduparrow}S=\{a\in B:\exists s\in S \mbox{ with } s\prec a\}.
\]  
We call a filter $F$ of $B$ \emph{round} if $F={\twoheaduparrow}F$.  
Maximal round filters of $B$ are called \emph{ends}. 
Let $\mathcal E(B)$ be the set of ends of $B$ topologized by the basis $\{\varepsilon(a): a\in B\}$, where 
\[
\varepsilon(a)=\{E\in\mathcal E(B): a\in E\}.
\]
Then $\mathcal E(B)$ is compact Hausdorff. For a de Vries morphism $\sigma:B\to B'$, let $\mathcal E(\sigma):\mathcal E(B')\to\mathcal E(B)$ be given by 
\[
\mathcal E(\sigma)(E)={\twoheaduparrow}\sigma^{-1}(E)
\]
for each $E\in\mathcal E(B')$. Then $\mathcal E(\sigma):\mathcal E(B')\to\mathcal E(B)$ is continuous. This gives rise to a contravariant functor $\mathcal E:\dev\to\KHaus$. The functors $\RO$ and $\mathcal E$ yield de Vries duality:

\begin{theorem} [De Vries duality \cite{deV62}] \label{thm: de Vries}
$\dev$ is dually equivalent to $\KHaus$.
\end{theorem}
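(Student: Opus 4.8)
The plan is to show that the two contravariant functors $\RO$ and $\mathcal E$ already described form a dual equivalence by producing natural isomorphisms $X \cong \mathcal E(\RO(X))$ in $\KHaus$ and $(B,\prec) \cong \RO(\mathcal E(B))$ in $\dev$. For the unit I would define $\eta_X : X \to \mathcal E(\RO(X))$ by $\eta_X(x) = \{U \in \RO(X) : x \in U\}$, and for the counit I would use the map $\varepsilon_B : B \to \RO(\mathcal E(B))$ with $\varepsilon_B(a) = \{E \in \mathcal E(B) : a \in E\}$ coming from the definition of the basis of $\mathcal E(B)$. Verifying that these are isomorphisms in the respective categories, together with naturality, then yields the dual equivalence; I may assume, as stated in the excerpt, that $\mathcal E(B)$ is compact Hausdorff.

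First I would treat the $\KHaus$ side. Since $X$ is regular, for $x \in U \in \RO(X)$ one interpolates an open $V$ with $x \in V \subseteq \cl V \subseteq U$ and replaces it by $\int \cl V \in \RO(X)$, showing that $\eta_X(x)$ is a round filter; a maximality argument shows it is an end. Injectivity of $\eta_X$ is immediate from Hausdorff separation of points by regular opens. For surjectivity, given an end $E$ the family $\{\cl U : U \in E\}$ has the finite intersection property because $E$ is a proper filter, so by compactness of $X$ its intersection contains a point $x$; using roundness of $E$ (each $U \in E$ dominates some $W \in E$ with $\cl W \subseteq U$, and $x \in \cl W$) one gets $E \subseteq \eta_X(x)$, and maximality of $E$ forces $E = \eta_X(x)$. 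Finally $\eta_X^{-1}(\varepsilon(U)) = U$ shows $\eta_X$ is continuous and open, hence a continuous bijection between compact Hausdorff spaces and therefore a homeomorphism.

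Next I would treat the $\dev$ side. One first checks that each $\varepsilon_B(a)$ is regular open in $\mathcal E(B)$ and that $\varepsilon_B$ is a boolean embedding: injectivity follows from (DV7) (if $a \not\le b$ one produces an end containing $a$ but not $b$), while $\varepsilon_B$ carries $\wedge$ and ${}^*$ to the boolean operations of $\RO(\mathcal E(B))$. The two structural facts to establish are that $a \prec b$ holds iff $\cl\,\varepsilon_B(a) \subseteq \varepsilon_B(b)$, i.e.\ iff $\varepsilon_B(a) \prec \varepsilon_B(b)$, and that $\varepsilon_B$ is onto $\RO(\mathcal E(B))$: since the sets $\varepsilon(a)$ form a basis, any regular open $W$ equals $\int \cl$ of the union of the basic opens it contains, and completeness of $B$ realizes this as $\varepsilon_B(a)$ for the join $a$ of the corresponding elements. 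Together these show $\varepsilon_B$ is a de Vries isomorphism.

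The main obstacle is the recovery of points and regular opens from the purely order-theoretic data: proving surjectivity of $\eta_X$, that every maximal round filter is the neighborhood filter of an actual point, which is exactly where compactness of $X$ enters; and proving that $\varepsilon_B$ both reflects $\prec$ and is surjective onto $\RO(\mathcal E(B))$, both resting on the interpolation axiom (DV6) and the separation axiom (DV7). Naturality in $X$ and in $B$ then requires compatibility with $\RO(\varphi) = \int \cl\,\varphi^{-1}$ and with the nonstandard composition $\star$ of $\dev$; this is routine but must be handled carefully, precisely because composition in $\dev$ is not ordinary function composition. I also note that the existence of enough ends (so that $\mathcal E(B) \ne \varnothing$ and surjectivity goes through) is where a Zorn's lemma argument is invoked.
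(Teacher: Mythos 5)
Your proposal is correct and follows exactly the classical route that the paper invokes by citation: the paper gives no proof of this theorem (it is quoted from de Vries's thesis), but the functors $\RO$ and $\mathcal E$ it sets up are precisely the ones you use, and your $\eta_X(x)=\{U\in\RO(X): x\in U\}$ and $\varepsilon_B(a)=\{E\in\mathcal E(B): a\in E\}$ are the standard unit and counit of that duality. The points you flag as the real work --- compactness of $X$ for surjectivity of $\eta_X$, Zorn's lemma for the existence of enough ends, the axioms (DV6)/(DV7) for showing $\varepsilon_B$ preserves and reflects $\prec$, and completeness of $B$ for surjectivity onto $\RO(\mathcal E(B))$ --- are indeed where the content of de Vries's argument lies, so I see no gap.
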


\section{The annihilator ideal functor}\label{sec:ann}

In this section we show that there is a rather natural covariant functor from $\bal$ to $\dev$. This functor is obtained by working with annihilator ideals of $\bal$-algebras. We show that this is a functor by  viewing  annihilator ideals as archimedean $\ell$-ideals.

\begin{definition} \label{def: arch}
 Let $A \in \bal$. We call an $\ell$-ideal $I$ of $A$ \emph{archimedean} if $A/I$ is archimedean (equivalently $A/I\in\bal$).  Let $\arch(A)$ be the set of archimedean $\ell$-ideals of $A$, ordered by inclusion.
\end{definition}

\begin{remark} \label{rem: properties of arch} 
Let $A\in\bal$. If $M$ is a maximal $\ell$-ideal of $A$, then $A/M \cong \mathbb{R}$. Thus, every maximal $\ell$-ideal is archimedean. In fact, an $\ell$-ideal $I$ of $A \in \bal$ is archimedean iff $I = \bigcap \{ M \in Y(A) : I \subseteq M\}$ (see, e.g., \cite[p.~440]{BMO13a}).
\end{remark}

\begin{remark}
In \cite{Ban97} Banaschewski studied the $\ell$-ideals in bounded archimedean $f$-rings that are closed in the norm topology. If $A$ is a $\bal$-algebra, then an $\ell$-ideal $I$ of $A$ is archimedean iff it is closed in the norm topology. 
\end{remark}

It is a consequence of a more general result of Banaschewski \cite[App.~2]{Ban97} that $\arch(A)$ ordered by inclusion is a frame, where we recall (see, e.g., \cite{PP12}) that a \emph{frame} is a complete lattice $L$ satisfying the \emph{join infinite distributive law} 
\[
a\wedge\bigvee S=\bigvee\{a\wedge s : s\in S\}.
\] 
The meet in $\arch(A)$ is set-theoretic intersection and the join is the archimedean $\ell$-ideal generated by the union.  

We further recall that a frame $L$ is \emph{compact} if $\bigvee S=1$ implies $\bigvee T=1$ for some finite $T \subseteq S$. For $a\in L$ let 
$
a^*=\bigvee\{b\in L: a\wedge b=0\}
$ 
be the pseudocomplement of $a$, and for $a,b\in L$ define the \emph{well-inside} relation by 
\[
a\prec b \mbox{ iff } a^*\vee b=1.
\] 
Then a frame $L$ is \emph{regular} if for each $a\in L$ we have $a=\bigvee\{b\in L : b\prec a\}$.

Given two frames $L$ and $M$, a map $h:L\to M$ is a \emph{frame homomorphism} if $h$ preserves finite meets and arbitrary joins. 

\begin{definition}
Let $\KRFrm$ be the category of compact regular frames and frame homomorphisms. 
\end{definition}

It follows from Banaschewski's result \cite[App.~2]{Ban97} that $\arch(A) \in \KRFrm$. 
 Moreover, if $\alpha:A\to A'$ is a $\bal$-morphism, then $\arch(\alpha):\arch(A)\to\arch(A')$ is a frame homomorphism, where $\arch(\alpha)$ sends each $I\in\arch(A)$ to the archimedean $\ell$-ideal of $A'$ generated by $\alpha[I]$. 
Thus, as a consequence of Banaschewski's results, we obtain:

\begin{proposition}
\label{thm: arch frame}
$\arch : \bal \to \KRFrm$ is a covariant functor. 
\end{proposition}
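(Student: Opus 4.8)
The plan is to verify the two functoriality requirements directly, using the facts stated in the excerpt as black boxes. Banaschewski's results (cited from \cite{Ban97}) already tell us that for each $A \in \bal$ the poset $\arch(A)$ of archimedean $\ell$-ideals ordered by inclusion is a compact regular frame, so the object assignment $A \mapsto \arch(A)$ lands in $\KRFrm$ as required. What remains is to show (i) that for each $\bal$-morphism $\alpha : A \to A'$ the assigned map $\arch(\alpha)$ is genuinely a frame homomorphism $\arch(A) \to \arch(A')$, and (ii) that the assignment respects identities and composition, so that $\arch$ is a covariant functor.

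First I would pin down the definition of $\arch(\alpha)$ and check it is well-defined on objects: given $I \in \arch(A)$, the set $\alpha[I]$ generates an $\ell$-ideal of $A'$, and we take its archimedean closure (equivalently, by the remark preceding the statement, its closure in the norm topology, or equivalently the intersection of the maximal $\ell$-ideals of $A'$ containing it, using Remark~\ref{rem: properties of arch}). Since the archimedean $\ell$-ideal generated by a subset always exists in the frame $\arch(A')$, this is well-defined, and by construction $\arch(\alpha)$ is monotone. To see it is a frame homomorphism I must check preservation of arbitrary joins and of finite meets. Preservation of arbitrary joins is the formal part: since $\arch(\alpha)(I)$ is defined as the least archimedean $\ell$-ideal containing $\alpha[I]$, the map $\arch(\alpha)$ is the left adjoint / directly-induced map sending the generated ideal of a union to the generated ideal of the image, and a standard adjunction or generators-and-joins argument gives $\arch(\alpha)(\bigvee_i I_i) = \bigvee_i \arch(\alpha)(I_i)$. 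Preservation of finite meets requires also checking $\arch(\alpha)(A) = A'$ (the top element, since $\alpha$ is unital so $\alpha(1) = 1$ and the generated ideal is all of $A'$) together with binary-meet preservation.

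The main obstacle I expect is preservation of binary meets, i.e. showing $\arch(\alpha)(I \cap J) = \arch(\alpha)(I) \cap \arch(\alpha)(J)$. The inclusion $\subseteq$ is immediate from monotonicity; the reverse inclusion is where the $f$-ring / $\ell$-algebra structure must be used, since in a general frame a left-adjoint-type map need not preserve binary meets, and here it is a special feature of archimedean $\ell$-ideals. I would attack this by translating into the Yosida-space picture of Remark~\ref{rem: properties of arch}: an archimedean $\ell$-ideal is the intersection of the maximal $\ell$-ideals above it, so it corresponds to a closed subset of $Y(A)$, intersection of ideals corresponds to union of closed sets (or its counterpart), and $\arch(\alpha)$ corresponds via $Y(\alpha) = \alpha^{-1}$ to the preimage/image operation on these closed sets. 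Because preimage under a continuous map commutes with the relevant set operations, meet preservation should follow once the dictionary is set up carefully; alternatively one invokes the appropriate piece of \cite[App.~2]{Ban97} directly, as the excerpt signals that $\arch(\alpha)$ being a frame homomorphism is itself part of Banaschewski's results.

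Finally, functoriality (identities and composition) is routine once $\arch(\alpha)$ is understood as ``generate the archimedean $\ell$-ideal from the image.'' The identity morphism sends each $I$ to the archimedean $\ell$-ideal generated by $I$ itself, which is $I$; and for composable $\alpha, \beta$ one checks $\arch(\beta) \circ \arch(\alpha) = \arch(\beta \circ \alpha)$ by comparing the archimedean $\ell$-ideals generated by $\beta[\alpha[I]]$ in two stages versus in one, using that the archimedean closure operator is idempotent so that closing twice equals closing once. Thus the bulk of the work is the meet-preservation step, and everything else is formal bookkeeping over the object-level facts already supplied by \cite{Ban97}.
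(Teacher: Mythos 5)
Your proposal is correct, but it takes a genuinely different route from the paper, for the simple reason that the paper offers no proof at all: Proposition~\ref{thm: arch frame} is stated as an immediate consequence of Banaschewski's results \cite[App.~2]{Ban97}, both for the object-level fact that $\arch(A)\in\KRFrm$ and for the fact that $\arch(\alpha)$ is a frame homomorphism. You instead verify the morphism-level claims by hand, and the verification goes through. The cleanest way to organize your join and functoriality steps is the observation you leave implicit: since $\alpha$ is a unital $\ell$-algebra homomorphism, $\alpha^{-1}$ carries archimedean $\ell$-ideals of $A'$ to archimedean $\ell$-ideals of $A$ (because $A/\alpha^{-1}(J)$ embeds in the archimedean algebra $A'/J$), so $\arch(\alpha)$ is left adjoint to $\alpha^{-1}$ and hence preserves arbitrary joins; uniqueness and composability of left adjoints then give $\arch(\beta)\circ\arch(\alpha)=\arch(\beta\circ\alpha)$ directly. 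Note that your appeal to idempotence of the archimedean closure alone does not justify this last identity: one also needs that $\beta$ maps the closure of $\alpha[I]$ into the closure of $\beta[\alpha[I]]$, which is exactly what the preimage fact (equivalently, the adjunction) supplies. Your meet-preservation argument via the Yosida dictionary is also sound: one checks $Z_\ell(\arch(\alpha)(I))=Y(\alpha)^{-1}(Z_\ell(I))$ and $Z_\ell(I\cap J)=Z_\ell(I)\cup Z_\ell(J)$ (the latter using that $A/M\cong\mathbb{R}$ is totally ordered, so $|a|\wedge|b|\in M$ forces $a\in M$ or $b\in M$), and archimedean $\ell$-ideals are determined by $Z_\ell$ by Remark~\ref{rem: properties of arch}. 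The one caveat worth recording is that this route runs through maximal $\ell$-ideals and hence the axiom of choice, which cuts against the paper's stated preference for choice-free arguments (cf.\ the comment before Lemma~\ref{lem:ann}); the paper itself sidesteps the issue here by citing Banaschewski rather than proving anything, so your proof buys an explicit, self-contained verification at the cost of choice, while the paper buys brevity at the cost of a black box.
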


As was observed in \cite{Bez12}, $\KRFrm$ is equivalent to $\dev$. We recall that an element $a$ of a frame $L$ is {\em regular} if $a^{**}=a$. The {\em booleanization} $\mathfrak B(L)$ of $L$ is the frame of regular elements of $L$. It is well known that $\mathfrak B(L)$ is a complete boolean algebra, where the meet and (pseudo)complement
in $\mathfrak B(L)$ are calculated as in $L$ and the join is calculated by the formula $\bigsqcup S=(\bigvee S)^{**}$. 

If $L\in\KRFrm$, then restricting the well-inside relation $\prec$ to $\mathfrak B(L)$ yields a de Vries algebra $(\mathfrak B(L),\prec)$. Moreover, if $h:L\to M$ is a frame homomorphism between compact regular frames, then $\mathfrak B(h):\mathfrak B(L)\to\mathfrak B(M)$ is a de Vries morphism, where $\mathfrak B(h)(a)=h(a)^{**}$. This defines a covariant functor $\mathfrak B:\KRFrm\to\dev$ which yields an equivalence between $\KRFrm$ and $\dev$: 

\begin{theorem}  \cite{Bez12} \label{thm: KRFrm = DeV}
$\KRFrm$ is equivalent to $\dev$.
\end{theorem}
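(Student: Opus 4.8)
The plan is to produce a functor $\mathcal{R}\colon\dev\to\KRFrm$ that is quasi-inverse to $\mathfrak{B}$, and to exhibit natural isomorphisms $\mathfrak{B}\circ\mathcal{R}\cong\Id_{\dev}$ and $\mathcal{R}\circ\mathfrak{B}\cong\Id_{\KRFrm}$. The object assignment is the round-ideal construction. Call an ideal $I$ of $(B,\prec)\in\dev$ \emph{round} if $I={\thd}I$, where ${\thd}I=\{a\in B:\exists\,b\in I\text{ with }a\prec b\}$, and let $\mathcal{R}(B)$ be the poset of round ideals of $B$ under inclusion. First I would verify that $\mathcal{R}(B)\in\KRFrm$. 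Intersections of round ideals are round and the join of a family is ${\thd}$ of the ideal they generate, so $\mathcal{R}(B)$ is a frame satisfying the join infinite distributive law; compactness is immediate from (DV1) and (DV2), since a round join equals $B$ exactly when $1$ lies in the underlying ideal join and hence in a finite subjoin; and regularity follows from (DV6) together with (DV7). The computation underpinning all of this is that for each $a\in B$ the set ${\thd}a=\{b:b\prec a\}$ is a round ideal, which uses (DV3) for downward closure, (DV4) and (DV5) for closure under finite joins, and (DV6) for roundness.

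On morphisms, a de Vries morphism $\sigma\colon B\to B'$ is sent to $\mathcal{R}(\sigma)\colon\mathcal{R}(B)\to\mathcal{R}(B')$ defined by $\mathcal{R}(\sigma)(I)=\bigcup_{a\in I}{\thd}\sigma(a)$, the round ideal generated by $\sigma[I]$. Preservation of arbitrary joins is formal, and preservation of the top element uses that $\sigma(1)=1$, which follows from (M1), (M3), and (DV1)—apply (M3) to $1\prec 1$ and use (M1) to obtain $1\prec\sigma(1)$, then invoke (DV2). Preservation of finite meets is the first genuinely nontrivial point and is where (M2) and (M3) enter, together with interpolation. The second, and subtler, point is functoriality: $\mathcal{R}$ must convert the nonstandard de Vries composition into ordinary composition of frame homomorphisms, i.e.\ $\mathcal{R}(\sigma_2\star\sigma_1)=\mathcal{R}(\sigma_2)\circ\mathcal{R}(\sigma_1)$ and $\mathcal{R}(\Id)=\Id$. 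This works precisely because the rounding built into $(\sigma_2\star\sigma_1)(a)=\bigvee\{\sigma_2\sigma_1(b):b\prec a\}$ mirrors the rounding built into the definition of $\mathcal{R}(\sigma)$, so that (DV6) lets the two agree.

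For the natural isomorphisms I would use the assignment $a\mapsto{\thd}a$ throughout. Given $B\in\dev$, I claim the regular elements of $\mathcal{R}(B)$ are exactly the round ideals ${\thd}a$ with $a\in B$, and that $a\mapsto{\thd}a$ is an isomorphism of de Vries algebras $B\xrightarrow{\sim}\mathfrak{B}(\mathcal{R}(B))$; the inverse is $I\mapsto\bigvee I$, which is available because $B$ is complete, with (DV7) guaranteeing injectivity and (DV6) matching the two well-inside relations. Given $L\in\KRFrm$, I would send $a\in L$ to the round ideal $\{b\in\mathfrak{B}(L):b\prec a\}$ of $\mathfrak{B}(L)$, with inverse $I\mapsto\bigvee I$; here regularity of $L$ yields $\bigvee\{b\in\mathfrak{B}(L):b\prec a\}=a$ (using that $b\prec a$ iff $b^{**}\prec a$ and $b\le b^{**}$), while compactness and the join infinite distributive law upgrade this bijection to a frame isomorphism $L\xrightarrow{\sim}\mathcal{R}(\mathfrak{B}(L))$. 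Naturality of both families is then a routine diagram chase against the morphism formulas above.

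I expect the main obstacle to be the morphism-level bookkeeping rather than the object-level constructions. Reconciling the $\star$-composition of $\dev$ with the strict composition of $\KRFrm$, while simultaneously confirming that $\mathcal{R}(\sigma)$ is a bona fide frame homomorphism, is the technical core, since this is exactly the place where the de Vries axioms controlling the interaction of $\prec$ with meets and complements—rather than mere lattice structure—are indispensable.
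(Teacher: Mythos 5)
Your proposal is correct and matches, essentially verbatim, the proof in the paper's cited source \cite{Bez12}: the paper itself only constructs the booleanization functor $\mathfrak{B}\colon\KRFrm\to\dev$ and cites \cite{Bez12} for the equivalence, and the quasi-inverse there is precisely your round-ideal functor $\mathcal{R}$, with the natural isomorphisms given by $a\mapsto{\thd}a$ and $I\mapsto\bigvee I$ exactly as you describe. One correction to your accounting of where the work lies: preservation of \emph{arbitrary joins} by $\mathcal{R}(\sigma)$ is not formal --- since $\sigma$ itself does not preserve joins and the join of round ideals involves finite joins of generators drawn from different ideals, one needs that $\sigma$ preserves $\prec$ (from (M2) and (M3)) together with an estimate such as $\sigma(e\vee f)\le\sigma(e)\vee\sigma(g)$ for $f\prec g$ (this is exactly Lemma~\ref{lem: de Vries fact} in this paper's appendix), so the nontrivial use of (M3) and interpolation that you file under finite meets in fact carries the join axiom.
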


\begin{definition}
Let $A \in \bal$. For $S\subseteq A$, let $\ann_A(S) = \{ a \in A : as = 0 \ \forall s \in S \}$ be the annihilator of $S$. 
\end{definition}

It is a standard fact of commutative ring theory that $\ann_A(S)$ is an ideal of $A$. As usual, we call an ideal $I$ of $A$ an {\em annihilator ideal} if $I=\ann_A(S)$ for some $S\subseteq A$.  The next lemma can be proved more quickly using Remark~\ref{rem: properties of arch}, but in keeping with our approach we give a choice-free proof that avoids the use of maximal ideals.

\begin{lemma}\label{lem:ann}
Let $A \in \bal$. If $I$ is an annihilator ideal of $A$, then $I$ is an archimedean $\ell$-ideal of $A$.
\end{lemma}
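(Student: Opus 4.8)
The plan is to verify the two defining properties separately: that $I$ is an $\ell$-ideal, and that the quotient $A/I$ is archimedean (so that $A/I\in\bal$). Throughout I will use that $A$, being a bounded $\ell$-algebra, is a commutative $f$-ring, and in particular that $|ab|=|a|\,|b|$ for all $a,b\in A$. Together with the $\ell$-algebra axioms this yields the basic tool that for $x\ge 0$ and any $s\in A$ one has $xs=0$ iff $x|s|=0$ (both are governed by $|xs|=|x|\,|s|=x|s|$). Write $I=\ann_A(S)$; that $I$ is an ideal is standard commutative algebra.

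First I would show that $I$ is an $\ell$-ideal. Suppose $b\in I$ and $|a|\le|b|$; I must prove $a\in I$, that is, $as=0$ for every $s\in S$. Fix $s\in S$. From $bs=0$ we get $|b|\,|s|=|bs|=0$, and multiplying the inequality $|b|-|a|\ge 0$ by $|s|\ge 0$ gives $0\le|a|\,|s|\le|b|\,|s|=0$. Hence $|as|=|a|\,|s|=0$, so $as=0$ and $a\in I$.

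For the archimedean property, recall that for an $\ell$-ideal $I$ the quotient $A/I$ is an $\ell$-algebra in which $\bar x\ge 0$ iff $x^-\in I$; in particular every nonnegative element of $A/I$ has a representative that is $\ge 0$ in $A$. A standard reduction (applying the condition to $a^+$ and $b^+$) shows it suffices to check the archimedean property on nonnegative elements. So suppose $a,b\ge 0$ in $A$ with $n\bar a\le\bar b$ in $A/I$ for every $n\ge 1$; I must show $\bar a=0$, i.e. $a\in I$. The hypothesis means $(na-b)^+\in I$ for all $n$. Fix $s\in S$ and put $t=|s|\ge 0$; since $(na-b)^+\ge 0$ lies in $I$, the tool above gives $(na-b)^+\,t=0$. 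Now $na\le (na-b)^++b$, and multiplying by $t\ge 0$ yields
\[
n(at)\le(na-b)^+\,t+bt=bt
\]
for every $n\ge 1$. As $at\ge 0$ and $A$ is archimedean, this forces $at\le 0$, hence $at=0$, hence $as=0$. Since $s\in S$ was arbitrary, $a\in I$ and thus $\bar a=0$.

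The order manipulations are routine; the step I expect to be the main obstacle, and which I regard as the crux, is the archimedean argument, specifically the reduction to nonnegative elements via the description of the order on $A/I$ together with the inequality $na\le(na-b)^++b$ which, after multiplication by $|s|$, transfers the archimedean property of $A$ to the annihilated element $at$. The $f$-ring identity $|ab|=|a|\,|b|$ is what keeps everything choice-free and lets me pass freely between $as=0$ and $a|s|=0$, so that no appeal to maximal $\ell$-ideals (as in Remark~\ref{rem: properties of arch}) is needed.
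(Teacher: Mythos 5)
Your proof is correct. The $\ell$-ideal half is essentially identical to the paper's (both hinge on the $f$-ring identity $|as|=|a|\,|s|$ to pass between $as=0$ and $a|s|=0$; the paper normalizes $S$ to nonnegative elements once and for all, you carry $t=|s|$ along instead — a cosmetic difference). The genuine divergence is in the archimedean half. The paper invokes the external criterion from [BCM21d, Prop.~4.8] — an $\ell$-ideal $J$ is archimedean iff $(n|a|-1)^+\in J$ for all $n\ge 1$ forces $a\in J$ — a characterization tailored to bounded algebras, where the strong unit $1$ plays the role of your $b$; it then applies the distributivity identity $[(n|a|-1)\vee 0]\cdot s=(n|a|-1)s\vee 0$ to deduce $n|a|s\le s$ and concludes by archimedeanness of $A$. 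You instead verify archimedeanness of $A/I$ straight from the definition: you reduce to nonnegative elements via monotonicity of $(-)^+$ and $(nx)^+=nx^+$, translate $n\bar a\le \bar b$ into $(na-b)^+\in I$ using the standard description of the quotient order, and replace the paper's distributivity identity with the inequality $na\le (na-b)^+ + b$, which after multiplication by $t$ gives $n(at)\le bt$; archimedeanness of $A$ then yields $at=0$. The two computations are the same maneuver at heart — annihilate the positive part by $s$ and transfer the archimedean property of $A$ through multiplication — but your route is self-contained where the paper outsources the quotient bookkeeping to the cited proposition, and it works verbatim with an arbitrary upper bound $b$ rather than using boundedness to normalize to $b=1$. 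Both arguments are choice-free, as required by the paper's stated aim. All the steps you flag as routine indeed are: in particular, your ``multiply $u\le v$ by $t\ge 0$'' is exactly the $\ell$-algebra axiom $0\le a,b$ implies $0\le ab$ applied to $v-u$ and $t$, and the reduction to nonnegative elements is sound since the quotient map is a lattice homomorphism, so $\overline{x^+}=\bar{x}^+$ has the nonnegative representative $x^+$.
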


\begin{proof}
Since $I$ is an annihilator ideal, $I = \ann_A(S)$ for some $S \subseteq A$. 
We first show that $I$ is an $\ell$-ideal. Let $a \in A$ and $b \in I$ such that 
 $|a| \le |b|$. Then for each $s \in S$ we have $0 \le |as| \le |bs| = 0$. Therefore, $|as| = 0$, and so $as = 0$. Thus, $a \in I$ and hence $I$ is an $\ell$-ideal.

Since $as=0$ iff $|as| = 0$ iff  $|a|s|| = 0$ iff $a|s|=0$, we have that $\ann_A(S)=\ann_A(\{ |s| : s \in S\})$. Thus, we can assume that $0 \le s$ for each $s \in S$.
To see that $I$ is archimedean, we utilize the following characterization of archimedean $\ell$-ideals \cite[Prop.~4.8]{BCM21d}: An $\ell$-ideal $J$ is archimedean iff $(n|a|-1)^+ \in J$ for each $n \ge 1$ implies $a \in J$. 
Let $a \in A$ with $(n|a|-1)^+ \in I$ for each $n \ge 1$. If $s \in S$, then $(n|a|-1)^+ \cdot s = 0$, so 
\[
0 = (n|a|-1)^+ \cdot s = [(n|a|-1) \vee 0] \cdot s = (n|a|-1)s \vee 0,
\]
where the last equality follows from the $f$-ring identity \cite[Cor.~XVII.6.1]{Bir79} since $s \ge 0$. Therefore, $(n|a|-1)s \le 0$, and hence $n|a|s \le s$ 
for each $n \ge 1$. Since $A$ is archimedean, $|a|s \le 0$. Because $s \ge 0$, this forces $|a|s = 0$, so $as = 0$. Thus, $a \in I$, and so $I$ is an archimedean $\ell$-ideal.
\end{proof}

\begin{definition}
For $A\in\bal$, let $\ann(A)$ be the set of annihilator ideals of $A$, ordered by inclusion. 
\end{definition}

By Lemma~\ref{lem:ann}, $\ann(A)$ is a subposet of $\arch(A)$. 
We next show that $\ann(A)$ is the booleanization of $\arch(A)$. 

\begin{proposition}\label{prop:bool}
For $A\in\bal$, the booleanization of $\arch(A)$ is $\ann(A)$.
\end{proposition}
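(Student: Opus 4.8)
The plan is to show that $\ann(A)$ coincides with $\mathfrak{B}(\arch(A))$, the frame of regular elements of $\arch(A)$; that is, the $\ell$-ideals $I \in \arch(A)$ satisfying $I^{**} = I$, where the pseudocomplement is taken in the frame $\arch(A)$. The strategy is to prove that the two notions of pseudocomplement agree, and then to characterize annihilator ideals as exactly the doubly-pseudocomplemented (i.e.\ regular) archimedean $\ell$-ideals. Throughout I would work choice-freely, in keeping with the stated aim of the section, relying on Lemma~\ref{lem:ann} (annihilator ideals are archimedean $\ell$-ideals) rather than on maximal ideals.

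First I would identify the pseudocomplement in $\arch(A)$ with an annihilator. For $I \in \arch(A)$, the frame-theoretic pseudocomplement is $I^* = \bigvee\{J \in \arch(A) : I \cap J = 0\}$. The key claim here is that $I^* = \ann_A(I)$. To see that $\ann_A(I)$ is a candidate for the join, note that $\ann_A(I) \in \arch(A)$ by Lemma~\ref{lem:ann}, and $I \cap \ann_A(I) = 0$ because any $a$ in the intersection satisfies $a \cdot a = a^2 = 0$, hence $a = 0$ in the $f$-ring $A$ (a positive element with zero square vanishes). For the reverse, if $J \in \arch(A)$ with $I \cap J = 0$, then for $a \in I$ and $b \in J$ one has $|ab| = |a||b| \le |a| \vee |b|$ type bounds showing $ab \in I \cap J$ via the $\ell$-ideal property; more directly, $ab$ lies in both $I$ and $J$ since these are ideals, so $ab = 0$, giving $J \subseteq \ann_A(I)$. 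Thus $I^* = \ann_A(I)$, and in particular every pseudocomplement $I^*$ in $\arch(A)$ is an annihilator ideal.

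Next I would establish the two inclusions that give the equality of sets. If $I$ is an annihilator ideal, say $I = \ann_A(S)$, then I would show $I = \ann_A(\ann_A(S))^{\text{(in the ideal-theoretic sense)}}$ reduces to the frame statement $I = I^{**}$, using the identification above together with the standard commutative-ring fact that $\ann_A(S) = \ann_A(\ann_A(\ann_A(S)))$ (annihilators are always closed under triple application). This shows every annihilator ideal is regular, so $\ann(A) \subseteq \mathfrak{B}(\arch(A))$. Conversely, if $I \in \arch(A)$ is regular, then $I = I^{**} = \ann_A(\ann_A(I))$ by the pseudocomplement identification, exhibiting $I$ as an annihilator ideal; hence $\mathfrak{B}(\arch(A)) \subseteq \ann(A)$.

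The main obstacle I anticipate is the careful verification that the frame pseudocomplement really equals the ring annihilator, specifically the step $I \cap \ann_A(I) = 0$ and the matching of joins, since one must avoid conflating the ideal-theoretic product annihilator with the lattice-theoretic pseudocomplement and must confirm that the suprema in $\arch(A)$ behave correctly. The delicate point is that the join in $\arch(A)$ is not the set-theoretic union but the archimedean $\ell$-ideal it generates, so showing $\ann_A(I)$ is genuinely the \emph{largest} archimedean $\ell$-ideal meeting $I$ trivially requires the $f$-ring identities (as used in Lemma~\ref{lem:ann}) to guarantee that products of elements from $I$-trivial ideals vanish. Once the pseudocomplement is pinned down as $\ann_A(-)$, the equality $\ann(A) = \mathfrak{B}(\arch(A))$ follows from the purely formal triple-annihilator identity, and the result is complete.
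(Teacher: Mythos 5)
Your proof is correct and takes essentially the same route as the paper's: both first establish $I^* = \ann_A(I)$ for $I \in \arch(A)$ (using Lemma~\ref{lem:ann} plus reducedness of $A$ for $I \cap \ann_A(I) = 0$; your verification that each $J \in \arch(A)$ with $I \cap J = 0$ lies in $\ann_A(I)$ is just an unwinding of the paper's $II^* \subseteq I \cap I^* = 0$), and then read off $\mathfrak{B}(\arch(A)) = \ann(A)$ from the chain $I = I^{**}$ iff $I = \ann_A(\ann_A(I))$ iff $I \in \ann(A)$, the last step resting on the triple-annihilator identity exactly as you say. One stray remark: the inequality ``$|a||b| \le |a| \vee |b|$'' you gesture at is false in general, but you immediately discard it in favor of the correct direct argument ($ab \in I \cap J = 0$ since $I$ and $J$ are ideals), so nothing is lost.
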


\begin{proof}

We first show that $I^* = \ann_A(I)$ for each $I\in\arch(A)$.
Since $A$ has no nonzero nilpotent elements \cite[p.~63, Cor.~3]{BP56}, $I \cap \ann_A(I) = 0$, so $\ann_A(I) \subseteq I^*$. Conversely, $II^* \subseteq I \cap I^* = 0$, so $I^* \subseteq \ann_A(I)$. Therefore, $I^* = \ann_A(I)$. From this it follows that 
\[
I\in\mathfrak B(\arch(A)) \mbox{ iff } I=I^{**} \mbox{ iff } I=\ann_A(\ann_A(I)) \mbox{ iff } I\in\ann(A).
\] 
Thus, $\mathfrak B(\arch(A))=\ann(A)$.
\end{proof}

\begin{remark}
The proof that $A$ has no nonzero nilpotent elements given in \cite[p.~63]{BP56} uses the fact that every $f$-ring embeds in a product of linearly ordered $f$-rings, which requires the axiom of choice. In Remark~\ref{rem: no nilpotents} we give an alternate choice-free proof of the fact that $A \in \bal$ has no nonzero nilpotent elements. 
\end{remark}

By Proposition~\ref{prop:bool}, $\ann(A)$ is a complete boolean algebra, and
so as discussed before Theorem~\ref{thm: KRFrm = DeV},
 $(\ann(A),\prec)$ is a de Vries algebra,  where $\prec$ is the restriction of the well-inside relation on $\arch(A)$ given by $I \prec J$ if $I^* \vee J = A$. Moreover, combining Propositions~\ref{thm: arch frame} and \ref{prop:bool}
yields the following:

\begin{theorem}
$\ann : \bal \to \dev$ is a covariant functor, and the following diagram commutes. 
\[
\begin{tikzcd}
& \bal \arrow[dl, "\arch"'] \arrow[dr, "\ann"] & \\
\KRFrm \arrow[rr, "\mathfrak B"'] && \dev
\end{tikzcd}
\]
\end{theorem}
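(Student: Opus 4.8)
The plan is to build $\ann$ as the composite $\mathfrak{B}\circ\arch$ and then recognize this composite as the annihilator-ideal assignment. Since $\arch:\bal\to\KRFrm$ is a covariant functor (Proposition~\ref{thm: arch frame}) and $\mathfrak{B}:\KRFrm\to\dev$ is a covariant functor (Theorem~\ref{thm: KRFrm = DeV}), the composite $\mathfrak{B}\circ\arch:\bal\to\dev$ is automatically covariant. Thus the entire content of the theorem reduces to checking that this composite coincides with $\ann$ on objects and on morphisms; functoriality of $\ann$ and commutativity of the triangle will then hold by construction.

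For the object level, I would invoke Proposition~\ref{prop:bool}, which already gives $\mathfrak{B}(\arch(A))=\ann(A)$ as complete boolean algebras. It then remains only to match the de Vries proximities. By the definition of $\mathfrak{B}$, the proximity on $\mathfrak{B}(\arch(A))$ is the restriction of the well-inside relation $\prec$ of the frame $\arch(A)$; but this is precisely the relation we used to equip $\ann(A)$ with its de Vries structure in the discussion following Proposition~\ref{prop:bool}. Hence the two de Vries algebras are literally equal, and this step is immediate.

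For the morphism level, given a $\bal$-morphism $\alpha:A\to A'$ I would set $\ann(\alpha):=\mathfrak{B}(\arch(\alpha))$, so that $\ann(\alpha)(I)=\arch(\alpha)(I)^{**}$ for $I\in\ann(A)$. To present this intrinsically in terms of annihilators, recall from the proof of Proposition~\ref{prop:bool} that double pseudocomplement in $\arch(A')$ is $J\mapsto\ann_{A'}(\ann_{A'}(J))$, and that $\arch(\alpha)(I)$ is the archimedean $\ell$-ideal generated by $\alpha[I]$. The one point that needs an argument is that passing to this generated ideal does not change the annihilator, that is, $\ann_{A'}(\alpha[I])=\ann_{A'}(\arch(\alpha)(I))$. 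This is the main (though minor) obstacle, and I expect it to follow from Lemma~\ref{lem:ann}: for fixed $a\in\ann_{A'}(\alpha[I])$, the set $\ann_{A'}(\{a\})$ is an annihilator ideal, hence an archimedean $\ell$-ideal, and it contains $\alpha[I]$; therefore it contains the archimedean $\ell$-ideal generated by $\alpha[I]$, giving $a\in\ann_{A'}(\arch(\alpha)(I))$, while the reverse inclusion is trivial since $\alpha[I]\subseteq\arch(\alpha)(I)$. Consequently $\ann(\alpha)(I)=\ann_{A'}(\ann_{A'}(\alpha[I]))$, the expected formula.

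Finally, with these identifications in hand, $\mathfrak{B}\circ\arch=\ann$ on both objects and morphisms. In particular $\ann(\alpha)$ is a de Vries morphism, i.e.\ satisfies (M1)--(M4), for free, since it equals the de Vries morphism $\mathfrak{B}(\arch(\alpha))$ produced by Theorem~\ref{thm: KRFrm = DeV}, and there is no need to verify the axioms by hand. Functoriality of $\ann$ and commutativity of the triangle then follow at once from the corresponding properties of $\arch$ and $\mathfrak{B}$. The upshot is that all the genuine work has already been carried out in Propositions~\ref{thm: arch frame} and~\ref{prop:bool} together with Theorem~\ref{thm: KRFrm = DeV}; the present theorem is essentially a bookkeeping assembly of those results.
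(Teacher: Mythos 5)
Your proposal is correct and takes essentially the same approach as the paper: the theorem there is obtained precisely by combining Proposition~\ref{thm: arch frame} and Proposition~\ref{prop:bool}, i.e., by recognizing $\ann$ as the composite $\mathfrak B\circ\arch$, with the de Vries structure on $\ann(A)$ being the restricted well-inside relation. Your extra verification that $\ann_{A'}(\alpha[I])=\ann_{A'}(\arch(\alpha)(I))$ via Lemma~\ref{lem:ann} (so that $\ann(\alpha)(I)=\ann_{A'}(\ann_{A'}(\alpha[I]))$) is a correct, choice-free filling-in of the morphism-level identification that the paper leaves implicit.
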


\begin{remark} \label{rem: ann = RO}
Let $A \in \bal$. It is known (see, e.g., \cite[Rem.~4.5]{BCM21d}) that $\arch(A)$ is isomorphic to the frame $\mathcal{O}(Y(A))$ of opens of the Yosida space $Y(A)$.
Since the booleanization of $\mathcal{O}(Y(A))$ is $\RO(Y(A))$, we obtain that $\ann(A)$ is isomorphic to $\RO(Y(A))$ by Proposition~\ref{prop:bool}. 
\end{remark}

\section{Dedekind completions, proximities, and the Dieudonn\'{e} lemma}\label{sec: Dieudonne}

As we saw in the previous section, we have a covariant functor $\ann:\bal\to\dev$. It is less obvious how to construct a covariant functor $\dev\to\bal$. 
Using de Vries and Gelfand dualities, if $X\in\KHaus$, then the corresponding de Vries and $\bal$-algebras are $\RO(X)$ and $C(X)$. 
As we will see shortly, there is an ambient algebra that contains both $C(X)$ and $\RO(X)$. We can then define a proximity on this algebra that will allow us to recover both $C(X)$ and $\RO(X)$. 
This approach is based on Dedekind completions and Dilworth's theorem discussed in the introduction.

We recall that $A\in\bal$ is a {\em Dedekind algebra} if each nonempty subset of $A$ bounded above has a sup, and hence each nonempty subset of $A$ bounded below has an inf. Let $\dbal$ be the full subcategory of $\bal$ consisting of Dedekind algebras. As was pointed out in \cite[Rem.~3.5]{BMO13b}, $\dbal$ is in fact a full subcategory of $\ubal$. 

A {\em Dedekind completion} of $A\in\bal$ is a pair $(D(A),\delta_A)$, where $D(A)$ is a Dedekind algebra and $\delta_A:A\to D(A)$ is a $\bal$-monomorphism such that the image is join-dense (and hence meet-dense) in $D(A)$. It follows from the work of Nakano \cite{Nak50} and Johnson \cite{Joh65} that Dedekind completions exist in $\bal$:

\begin{theorem} \cite[Thm.~3.1]{BMO13b}
For each $A\in\bal$ there exists a unique up to isomorphism Dedekind algebra $D(A)$ and a $\bal$-monomorphism $\delta_A:A\to D(A)$ such that $\delta_A[A]$ is join-dense $($and hence meet-dense$)$ in $D(A)$.
\end{theorem}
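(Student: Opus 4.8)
The plan is to construct $D(A)$ explicitly by Dedekind cuts, equip it with the $\ell$-algebra operations inherited from $A$ by density, and then verify in turn: conditional completeness, the $\bal$-axioms, join- and meet-density of the embedding, and finally uniqueness. Throughout, the two structural facts that do the real work are that $A$ is \emph{archimedean} and that $1$ is a strong order unit; the latter guarantees that every element of the completion is again dominated by some $n\cdot 1$, so that all the suprema we will need to form actually exist.

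Concretely, I would let $D(A)$ be the poset of all nonempty subsets $S\subseteq A$ that are bounded above and satisfy $S=S^{ul}$ (that is, $S$ coincides with the set of lower bounds of the set $S^{u}$ of its upper bounds), ordered by inclusion; such $S$ are automatically down-sets. This is the conditional Dedekind--MacNeille completion of the lattice $A$: it is conditionally complete, the supremum of a bounded family $\{S_i\}$ being the cut $(\bigcup_i S_i)^{ul}$, and the map $\delta_A\colon a\mapsto {\downarrow}a$ is an order-embedding whose image is both join-dense and meet-dense, since $S=\sup\{\,\delta_A(a):a\in S\,\}$. Because any such $S$ is bounded above by some $a_0\le n\cdot 1$, every element of $D(A)$ lies below $n\cdot 1$, so $1$ remains a strong order unit and $D(A)$ is bounded.

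Next I would transport the algebraic operations using join-density. For $d,e\in D(A)$ set $d+e=\sup\{\,a+b:a,b\in A,\ a\le d,\ b\le e\,\}$ and, on the positive cone, $d\cdot e=\sup\{\,ab:0\le a\le d,\ 0\le b\le e,\ a,b\in A\,\}$, extending multiplication to all of $D(A)$ through $d=d^{+}-d^{-}$; scalar multiplication by $r\ge 0$ is handled the same way, and by $r<0$ via negation. Each supremum exists by conditional completeness and boundedness. The bulk of the work is then the verification that these operations extend those of $A$ and satisfy the $\ell$-algebra axioms, and I expect this to be the main obstacle: associativity and distributivity of the extended multiplication, together with its compatibility with the order ($0\le d,e\Rightarrow 0\le de$), must be established by reducing each identity among cuts to the corresponding identity in $A$ and invoking the $f$-ring identities recorded earlier. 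This is precisely the content supplied by the results of Nakano (for the underlying $\ell$-group/vector-lattice completion) and Johnson (for the multiplicative structure of $f$-rings). By contrast, the archimedean property of $D(A)$ is comparatively easy: if $nd\le e$ for all $n\ge 1$, then since $e\le m\cdot 1$ we get $na\le m\cdot 1$ for every $a\in A$ with $a\le d$, whence $a\le 0$ by the archimedean property of $A$, and join-density forces $d\le 0$.

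Finally, for uniqueness, given two such pairs $(D_1,\delta_1)$ and $(D_2,\delta_2)$ I would define $\varphi\colon D_1\to D_2$ by $\varphi(d)=\sup\{\,\delta_2(a):a\in A,\ \delta_1(a)\le d\,\}$, with a symmetric candidate inverse. Join-density makes $\varphi$ a well-defined order-isomorphism fixing $\delta_i[A]$, and it is a $\bal$-isomorphism because both completions extend the \emph{same} operations on $A$ while those operations commute with the defining suprema, i.e.\ addition, multiplication, $\vee$, and $\wedge$ are order-continuous on a Dedekind algebra. This last compatibility again requires a little care, but it follows the same reduction-to-$A$ pattern used for existence, so no genuinely new difficulty arises.
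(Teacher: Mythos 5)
Your proposal is correct and follows essentially the same route as the paper, which does not prove this theorem itself but cites \cite[Thm.~3.1]{BMO13b} and attributes the substance to Nakano \cite{Nak50} (cut completion of the archimedean lattice/vector-lattice structure) and Johnson \cite{Joh65} (extension of the $f$-ring multiplication) --- exactly the two sources to which you defer the hard verifications. Your surrounding outline (MacNeille cuts bounded via the strong unit, the archimedean check, and the uniqueness argument via join-density and order-continuity of the operations) is the standard argument underlying that citation and contains no gaps beyond the parts explicitly delegated to those references.
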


By Dilworth's theorem mentioned in the introduction, $D(A)$ is isomorphic to the algebra $N(Y(A))$ of normal functions on the Yosida space of $A$:

\begin{theorem} \cite[Prop.~4.7 and Rem.~4.9]{BMO16}
If $A\in\bal$, then up to isomorphism, the pair $(N(Y(A)),\zeta_A)$ is the Dedekind completion of $A$.
\end{theorem}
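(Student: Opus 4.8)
The plan is to verify directly that $(N(Y(A)),\zeta_A)$ satisfies the defining properties of a Dedekind completion of $A$, after which the uniqueness of Dedekind completions yields the isomorphism with $D(A)$ and hence the ``up to isomorphism'' phrasing. Writing $X=Y(A)\in\KHaus$, I would first recall from the Gelfand section that $\zeta_A\colon A\to C(X)$ is a $\bal$-monomorphism whose image separates the points of $X$, so by the Stone--Weierstrass theorem $\zeta_A[A]$ is uniformly dense in $C(X)$. Since continuous functions are normal, $C(X)$ is an $\ell$-subalgebra of $N(X)$, and I may regard $\zeta_A$ as a $\bal$-monomorphism $A\to N(X)$. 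By Dilworth's theorem, $N(X)$ is a Dedekind algebra and the embedding $C(X)\hookrightarrow N(X)$ is join-dense. Thus $N(X)$ is Dedekind and $\zeta_A$ is a $\bal$-monomorphism, and what remains is to prove that $\zeta_A[A]$ is join-dense in $N(X)$.

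For the join-density step I would fix $g\in N(X)$ and set $s=\sup\{\zeta_A(a):\zeta_A(a)\le g\}$, computed in $N(X)$; clearly $s\le g$. By Dilworth's theorem $g=\sup\{f\in C(X):f\le g\}$, so it suffices to show $f\le s$ for each $f\in C(X)$ with $f\le g$. Given such an $f$ and $n\ge 1$, uniform density provides $a\in A$ with $\|\zeta_A(a)-f\|\le 1/n$, whence $f-1/n\le\zeta_A(a)\le f+1/n$ pointwise. Then $a'=a-(1/n)\cdot 1\in A$ satisfies $\zeta_A(a')=\zeta_A(a)-1/n\le f\le g$, so $\zeta_A(a')$ belongs to the set defining $s$ and therefore $\zeta_A(a')\le s$; on the other hand $\zeta_A(a')\ge f-2/n$. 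Combining these gives $f-2/n\le s$, i.e.\ $n(f-s)\le 2\cdot 1$ for every $n\ge 1$, and the archimedean axiom for $N(X)$ forces $f-s\le 0$. Hence $f\le s$ for all admissible $f$, so $g=s$, proving join-density and that $(N(X),\zeta_A)$ is a Dedekind completion of $A$.

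The main obstacle I anticipate is precisely this last step: bridging the metric statement of uniform density to the order-theoretic statement of join-density. The device that makes it work is to shift each norm-approximant down by the constant $1/n$ so that it genuinely lies below $g$ and thus contributes to the supremum $s$, and then to use the archimedean property to absorb the resulting error $2/n$. This keeps the whole argument inside $\bal$ and is choice-free, in keeping with the paper's approach; in particular it avoids any appeal to maximal ideals beyond those already built into $Y(A)$ and $\zeta_A$. (One could instead try to deduce join-density from transitivity, reducing to ``$\zeta_A[A]$ join-dense in $C(X)$'', but since $C(X)$ need not be Dedekind complete the intermediate suprema are problematic, so I would favor the direct argument above.)
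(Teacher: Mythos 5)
Your argument is correct, and it is worth noting that the paper itself does not prove this statement: it is quoted from \cite{BMO16} (Prop.~4.7 and Rem.~4.9), so there is no in-paper proof to match step by step. Your route—verify directly that $\zeta_A[A]$ is join-dense in $N(Y(A))$ by combining Stone--Weierstrass uniform density with Dilworth's join-density of $C(X)$ in $N(X)$, and then absorb the $2/n$ error via the archimedean axiom—is a legitimate, self-contained derivation, and the downshift $a' = a - (1/n)\cdot 1$ is exactly the right device for converting norm-approximation into membership in the set $\{\zeta_A(a) : \zeta_A(a) \le g\}$. The machinery the paper actually has on hand suggests a slightly shorter variant: by the essentiality criterion quoted in Remark~\ref{ex: prec_A}(2) (i.e.\ \cite[Prop.~2.12]{BMO16}), an $\ell$-subalgebra $A$ of a Dedekind algebra $D$ has $D$ as its Dedekind completion iff $A$ is essential in $D$, equivalently iff below each $0 < g \in D$ there is $0 < a \in A$. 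Granting that criterion, one only needs: pick $0 < f \in C(X)$ with $f \le g$ (possible by Dilworth's join-density, replacing $f$ by $f^+$ if necessary), choose $x$ with $f(x) = \varepsilon > 0$, take $a \in A$ with $\|\zeta_A(a) - f\| \le \varepsilon/3$, and replace $a$ by $(a - \varepsilon/3) \vee 0$; this gives $0 < \zeta_A(a) \le f \le g$ since $\zeta_A$ is an $\ell$-homomorphism. Both proofs rest on the same two inputs (Stone--Weierstrass and Dilworth), so the difference is organizational: your direct computation of the supremum $s$ avoids invoking the essentiality criterion but requires the archimedean absorption step, whereas the criterion packages that order-theoretic work once and for all. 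Two small points you should make explicit: the set defining $s$ is nonempty (it contains the constants $r\cdot 1$ with $r \le g$, as $g$ is bounded below), so the supremum exists in the Dedekind algebra $N(X)$; and the normalized operations of $N(X)$ (Remark~\ref{rem: N(X)}) restrict to the pointwise operations on $C(X)$, which is what lets you regard $\zeta_A$ as a $\bal$-monomorphism into $N(X)$ rather than merely into $C(X)$.
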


\begin{remark} \label{rem: N(X)}
Let $X$ be a topological space. Recalling from the introduction the Baire operators $(-)^*$ and $(-)_*$ on the $\ell$-algebra $B(X)$, 
we have 
\[
N(X) = \{ f \in B(X) : f = (f^*)_* \}.
\] 
Thus, $N(X)$ is not an $\ell$-subalgebra of $B(X)$ since its operations are not pointwise while those of $B(X)$ are. In fact, 
the operations on $N(X)$ are ``normalizations'' of the pointwise operations on $B(X)$ (see, e.g., \cite{Dan15,BMO16}). For example, if $+$ is the pointwise addition, then its normalization is 
\[
f \oplus g = ((f+g)^*)_*.
\] 
The other operations on $N(X)$ are defined similarly using normalization. (However, unlike join, meet in $N(X)$ is pointwise.)
\end{remark}

\begin{notation}
To simplify notation, we identify $A \in \bal$ with its image $\delta_A[A]$ in $D(A)$ and view $\delta_A$ as an inclusion map.
\end{notation}

Let $X\in\KHaus$. It is easy to see that $C(X)$ is a $\bal$-subalgebra of $N(X)$. Moreover, if $U\in\RO(X)$, then the characteristic function $\chi_U$ of $U$ is a normal function, and we can identify $\RO(X)$ with the idempotents of $N(X)$ (see, e.g., \cite[Lem~6.5]{BCM21d}). Thus, $N(X)$ is our desired ambient algebra containing both $C(X)$ and $\RO(X)$.

To recover $C(X)$ from $N(X)$, we utilize the Kat{\v{e}}tov-Tong theorem discussed in the introduction, which implies that if $f,g\in N(X)$ with $f^*\le g$, then there is $h\in C(X)$ such that $f\le h\le g$. This allows us to define a proximity relation $\lhd$ on $N(X)$ by setting $f\lhd g$ iff $f^*\le g$. The Kat{\v{e}}tov-Tong theorem then yields that $C(X)$ is exactly the algebra $\{ f \in N(X) : f \lhd f\}$. 
Because of this, we call $\lhd$ the {\em Kat{\v{e}}tov-Tong proximity} or {\em KT-proximity} for short. The pairs $(N(X),\lhd)$, where $\lhd$ is a KT-proximity, were axiomatized in \cite{BMO16} using the following notion of proximity.

\begin{definition} \label{def: proximity bal}
Let $A \in \bal$. We call a binary relation $\lhd$ on $A$ a {\em proximity} if the following axioms are satisfied:
\begin{enumerate}
\item[(P1)] $0\lhd 0$ and $1 \lhd 1$;
\item[(P2)] $a \lhd b$ implies $a \le b$;
\item[(P3)] $a \le b \lhd c \le d$ implies $a \lhd d$;
\item[(P4)] $a \lhd b,c$ implies $a \lhd b \wedge c$;
\item[(P5)] $a \lhd b$ implies $-b \lhd -a$;
\item[(P6)] $a \lhd b$ and $c \lhd d$ imply $a+c \lhd b+d$;
\item[(P7)] $a \lhd b$ and $0 < r\in\mathbb R$ imply $ra \lhd rb$;
\item[(P8)] $a,b,c,d \ge 0$ with $a \lhd b$ and $c \lhd d$ imply $ac \lhd bd$;
\item[(P9)] $a \lhd b$ implies there is $c \in A$ with $a \lhd c \lhd b$;
\item[(P10)] $a > 0$ implies there is $0 < b\in A$ with $b \lhd a$.
\end{enumerate}
We call the pair $(A,\lhd)$ a {\em proximity $\bal$-algebra}.
\end{definition}

\begin{remark} \label{rem: join}
Since $-(a \vee b) = (-a) \wedge (-b)$, it follows from (P4) and (P5) that $a, b \lhd c$ implies $a \vee b \lhd c$. This will be used in the proof of Theorem~\ref{thm: Dieudonne}.
\end{remark}

Let $(A, \lhd)$ be a proximity $\bal$-algebra. We call $a \in A$ \emph{reflexive} if $a \lhd a$. Let $\dv{R}(A, \lhd)$ be the set of reflexive elements of $(A,\lhd)$. It is an easy consequence of the proximity axioms that $\dv{R}(A, \lhd)$ is an $\ell$-subalgebra of $A$ (see \cite[Lem.~8.10(1)]{BMO16}). 

Clearly each $r\in\mathbb R$ is a reflexive element of $(A,\lhd)$, but in general these might be the only reflexive elements of $(A,\lhd)$. Therefore, to make sure that we have lots of reflexive elements, we need to strengthen (P9).

\begin{definition} \label{def: proximity dedekind algebra}
Let $D$ be a Dedekind algebra. We call a proximity $\lhd$ on $D$ a {\em Kat{\v{e}}tov-Tong proximity} or {\em KT-proximity} for short if (P9) is strengthened to
\begin{enumerate}
\item[(KT)] $a \lhd b$ implies there is $c\in \dv{R}(D, \lhd)$ with $a \lhd c \lhd b$.
\end{enumerate}
We call the pair $(D,\lhd)$ a {\em proximity Dedekind algebra}.
\end{definition}

\begin{remark}
Our terminology is slightly different from that in \cite{BMO16}, where proximities on $\bal$-algebras were first introduced. 
\end{remark}

\begin{remark} \label{ex: prec_A}
\hfill
\begin{enumerate}
\item Typical examples of proximity Dedekind algebras can be constructed as follows. Let $A \in \bal$ and $D = D(A)$ be its Dedekind completion. Define $\lhd_A$ on $D$ by
\[
f\lhd_A g \mbox{ iff } \exists a\in A : f\le a \le g.
\] 
It is elementary to check that $\lhd_A$ satisfies all the proximity axioms save (P10), for which we need to recall the notion of an essential subalgebra. 
An $\ell$-subalgebra $A$ of $D \in \dbal$ is \emph{essential} if $A \cap I \ne 0$ for each nonzero $\ell$-ideal $I$ of $D$. By \cite[Prop.~2.12]{BMO16}, $A$ is essential in $D$ iff for each $0 < d \in D$ there is $0 < a \in A$ with $a \le d$. Thus, essentiality of $A$ in $D$ is equivalent to (P10) for $\lhd_A$. 

\item More generally, \cite[Prop.~2.12]{BMO16} implies that if $A$ is an $\ell$-subalgebra of a Dedekind algebra $D$, then $A$ is essential in $D$ iff $D$ is (isomorphic to) the Dedekind completion of $A$. In particular, if $\lhd$ is a KT-proximity on $D$ and $A = \dv{R}(D, \lhd)$, then (P10) implies that $A$ is essential in $D$, and hence $D$ is the Dedekind completion of $A$.
\end{enumerate}
\end{remark}  

\begin{definition} \label{def: proximity morphism}
Let $(D,\lhd)$ and $(D' ,\lhd')$ be proximity $\bal$-algebras. We call a map $\alpha : D \to D'$ a \emph{proximity morphism} provided, for all $a,b,c \in D$ with $c\lhd c$ and $0 < r \in \mathbb{R}$, we have:
\begin{enumerate}
\item[(PM1)] $\alpha(0) = 0$ and $\alpha(1) = 1$;
\item[(PM2)] $\alpha(a \wedge b) = \alpha(a) \wedge \alpha(b)$;
\item[(PM3)] $a \lhd b$ implies $-\alpha(-a) \lhd' \alpha(b)$;
\item[(PM4)] $\alpha(b) = \bigvee\{ \alpha(a) : a \lhd b\}$;
\item[(PM5)] $\alpha(ra) = r\alpha(a)$;
\item[(PM6)] $\alpha(a \vee c) = \alpha(a) \vee \alpha(c)$;
\item[(PM7)] $\alpha(a + c) = \alpha(a) + \alpha(c)$;
\item[(PM8)] $c\ge 0$ implies $\alpha(ca) = \alpha(c)\alpha(a)$.
\end{enumerate}
\end{definition}

As was shown in \cite[Thm.~8.12]{BMO16}, proximity Dedekind algebras with proximity morphisms form a category $\pda$, where the composition $\alpha_2\star\alpha_1$ of proximity morphisms $\alpha_1 : (D_1, \lhd_1) \to (D_2, \lhd_2)$ and $\alpha_2 : (D_2, \lhd_2) \to (D_3, \lhd_3)$ is defined by 
\[ 
(\alpha_2\star\alpha_1)(a) = \bigvee \{ \alpha_2(\alpha_1(x)) : x \lhd_1 a \}.
\]

\begin{definition} \label{def: KT-proximity}
Let $D$ be a Dedekind algebra. We call a KT-proximity $\lhd$ on $D$ {\em closed} if $\lhd$ is a closed subset in the product topology on $D\times D$. 
\end{definition}

Typical examples of closed proximities are obtained by taking the pairs $(N(X),\lhd)$ where $X\in\KHaus$ and $\lhd$ is the KT-proximity on $N(X)$. This motivates the following definition.

\begin{definition} \label{def: category KT}
Let $(D,\lhd)$ be a proximity Dedekind algebra. We call $(D,\lhd)$ a {\em Kat{\v{e}}tov-Tong algebra}, or {\em KT-algebra} for short, if $\lhd$ is a closed proximity. Let $\cpda$ be the full subcategory of $\pda$ consisting of KT-algebras.
\end{definition}

One of the main results of \cite{BMO16} yields a dual adjunction between $\pda$ and $\KHaus$ which restricts to a dual equivalence between $\cpda$ and $\KHaus$. This is achieved through the contravariant functors $N:\KHaus\to\cpda$ and $\End:\pda\to\KHaus$.

\[
\begin{tikzcd}
\cpda \arrow[rr, hookrightarrow] && \pda \arrow[dl, "\End"]  \arrow[ll, bend right = 20, "N \circ \End"'] \\
&  \KHaus \arrow[ul,  "N"] &
\end{tikzcd}
\]

The functor $N$ sends $X \in \KHaus$ to $(N(X), \lhd)$, where $\lhd$ is the KT-proximity on $N(X)$. 
On morphisms, $N$ sends a continuous map $\varphi : X \to Y$ to the proximity morphism $N(\varphi) : N(Y) \to N(X)$ given by $N(\varphi)(f) = ((f \circ \varphi)^*)_*$ for each $f \in N(Y)$.
To describe the functor $\End$, we recall from \cite[Sec.~5]{BMO16} that an $\ell$-ideal $I$ of a proximity Dedekind algebra is \emph{round} if $a \in I$ implies there is $b \in I$ with $|a| \lhd b$, and an \emph{end} is a maximal round $\ell$-ideal. The functor $\End$ then sends $(D, \lhd)$ to the \emph{space of ends} of $(D, \lhd)$, where the definition of the topology on the set of ends is similar to the definition of the Zariski topology on the space of maximal $\ell$-ideals of a $\bal$-algebra. 
On morphisms, $\End$ sends a proximity morphism $\alpha : (D, \lhd) \to (D', \lhd')$ to the continuous map $\End(\alpha) :\End(D', \lhd') \to \End(D, \lhd)$ given by
\[
\End(\alpha)(x) = \{ d \in D : |d| \lhd c \mbox{ for some } c \in  \alpha^{-1}(x)\}
\]
for each $x \in \End(D', \lhd')$.

The obtained duality is reminiscent of Gelfand duality, albeit in the language of proximity Dedekind algebras. Indeed, the categories $\bal$ and $\pda$ are equivalent. The covariant functor $\dv{R} : \pda\to\bal$ associates with each proximity Dedekind algebra $(D,\lhd)$ the $\bal$-algebra $\mathfrak R(D,\lhd)$ of reflexive elements of $(D,\lhd)$,
and with each proximity morphism $\alpha:D\to E$ its restriction to $\mathfrak R(D,\lhd)$. The covariant functor $\dv{D} : \bal\to\pda$ associates with each $A\in\bal$ the proximity Dedekind algebra $(D(A),\lhd_A)$ (see Remark~\ref{ex: prec_A}(1)),
and with each $\bal$-morphism $\alpha:A\to B$ the proximity morphism 
$\dv{D}(\alpha):D(A)\to D(B)$ given by 
\[
\dv{D}(\alpha)(f)=\bigvee\{\alpha(a) : a\in A \mbox{ and } a\le f \}.
\] 

The equivalence of $\bal$ and $\pda$ restricts to an equivalence of $\ubal$ and $\cpda$. Thus, we arrive at the following commutative diagram \cite[p.~1130]{BMO16}:
\[
\begin{tikzcd}[column sep = 5pc]
\bal \arrow[r, leftrightarrow] \arrow[d, bend right = 30] & \pda \arrow[d, bend right = 30] \\
\ubal \arrow[u, hookrightarrow] \arrow[r, leftrightarrow] & \cpda \arrow[u, hookrightarrow]
\end{tikzcd}
\]

\begin{remark}\label{rem:BMO proof}
The equivalence of $\bal$ and $\pda$ is proved in \cite{BMO16} directly, without a passage to $\KHaus$. However, the proof of equivalence of $\ubal$ and $\cpda$ is done by representing each $A\in\ubal$ as $C(X)$ and $D(A)$ as $N(X)$ for some $X\in\KHaus$, and then utilizing the Kat{\v{e}}tov-Tong theorem. Thus, the proof of the equivalence of $\ubal$ and $\cpda$ given in \cite{BMO16} is not choice-free.
\end{remark}

We conclude this section by giving a direct choice-free proof of the equivalence between $\ubal$ and $\cpda$. For this we require the Dieudonn\'{e} lemma in the form given below.

The Dieudonn\'{e} technique originates in \cite{Die44}. It was utilized by several authors in different contexts. See, for example, \cite{Edw66,BS75,Lan76,BMO20c}. The version below is formulated in the language of proximity $\bal$-algebras and is one of our main results.

\begin{theorem} [Dieudonn\'{e}'s lemma] \label{thm: Dieudonne}
Let $(S, \lhd)$ be a proximity $\bal$-algebra and $D$ the Dedekind completion of $S$ such that $S$ is uniformly dense in $D$. Then the closure $\lhd'$ of $\lhd$ in $D \times D$ is a KT-proximity, and hence $(D,\lhd')$ is a KT-algebra. 
\end{theorem}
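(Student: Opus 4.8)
The plan is to exploit that $\lhd' = \overline{\lhd}$ is, by construction, a norm-closed subset of $D\times D$, and that $D$, being a Dedekind (hence uniformly complete) $\bal$-algebra, is a Banach lattice-ordered algebra in which the ring and lattice operations are norm-continuous and the positive cone is norm-closed; since $D=D(S)$, Remark~\ref{ex: prec_A}(2) gives that $S$ is essential in $D$. I would first verify that $\lhd'$ satisfies (P1)--(P8) and (P10). Most of these pass to the closure by a direct limit argument: given $\lhd$-approximants in $S$ converging to the relevant pairs, apply the corresponding axiom for $\lhd$ and take norm-limits, using continuity of the operations and closedness of $\le$ (so that, e.g., (P2) and (P1) are immediate). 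The subtlety is enforcing the side conditions of (P3), (P4), (P8) and the strict positivity of (P10). For these I would \emph{clamp} the approximants with the lattice operations: for (P3), replace approximants of the two new endpoints by $a'_n \wedge b_n$ and $d'_n \vee c_n$ so that they stay below (resp. above) the given $\lhd$-pair while still converging to $a$ (resp. $d$); for (P4), pass to the meet of the two left-approximants; for (P8), replace the approximants by their positive parts, which is legitimate by Remark~\ref{rem: join}; and for (P10), combine essentiality of $S$ (to find $0<s\le d$) with (P10) for $\lhd$ and (P3) for $\lhd'$. Once (KT) is established below, (P9) is automatic.

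The substance of the theorem is the interpolation axiom (KT): given $a\lhd' b$, produce a reflexive $c$ with $a\lhd' c \lhd' b$. My plan is a two-sided squeeze. I would construct nested pairs $a=x_0\le x_1\le\cdots\le y_1\le y_0=b$ with $x_n\lhd' y_n$ for all $n$ and $\|y_n-x_n\|\to 0$. Nestedness gives $0\le x_m-x_n\le y_n-x_n$ for $m\ge n$, hence $\|x_m-x_n\|\le\|y_n-x_n\|$, so $(x_n)$ and $(y_n)$ are norm-Cauchy with a common limit $c\in D$. Since each $(x_n,y_n)\in\lhd'$ and $(x_n,y_n)\to(c,c)$, closedness of $\lhd'$ yields $c\lhd' c$, so $c$ is reflexive. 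If the first step is arranged so that $a=x_0\lhd' x_1$ and $y_1\lhd' y_0=b$, then from $x_1\le c\le y_1$ and (P3) for $\lhd'$ we obtain $a\lhd' c$ and $c\lhd' b$, establishing (KT). As $\lhd'$ is closed by construction, it is then a closed KT-proximity, so $(D,\lhd')$ is a KT-algebra.

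Everything thus reduces to a \emph{one-step lemma}: given $x\lhd' y$, find $x',y'$ with $x\le x'\le y'\le y$, $x\lhd' x'$, $y'\lhd' y$, $x'\lhd' y'$, and $\|y'-x'\|\le\tfrac12\|y-x\|$; iterating this from $(a,b)$ produces the squeeze. I expect this lemma to be the main obstacle, and it is where the Dieudonn\'e technique and Remark~\ref{rem: join} enter. The difficulty is that the interpolation available to us, namely (P9) for $\lhd$ inside $S$, produces interpolants with no control whatsoever on their norm. To manufacture norm control I would approximate $x\lhd' y$ by an honest pair $\tilde x\lhd\tilde y$ in $S$ (uniform density), iterate (P9) to obtain a fine chain $\tilde x=e_0\lhd e_1\lhd\cdots\lhd e_N=\tilde y$, and then synthesize $x'$ and $y'$ as joins and meets of the $e_i$ clipped against a ruler of scalar (hence reflexive) levels $\rho_i\cdot 1$ spaced $\|y-x\|/N$ apart. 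The crucial point is that clipping a $\lhd$-pair with a real level preserves the relation: $e_i\wedge\rho 1\lhd' e_{i+1}\wedge\rho 1$ follows from $\rho 1\lhd'\rho 1$ together with (P3) and (P4) for $\lhd'$, and the resulting clipped pieces recombine across the join via Remark~\ref{rem: join}. The scalar ruler forces the synthesized band to have width $O(\|y-x\|/N)$, giving the required halving once $N$ is large and $\tilde x,\tilde y$ are chosen close enough to $x,y$, while the chain supplies the proximity relations $x\lhd' x'$, $x'\lhd' y'$, and $y'\lhd' y$. The bookkeeping that these clipped joins genuinely lie in $[x,y]$ and carry the stated relations is the technical heart of the argument.
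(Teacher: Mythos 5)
Your skeleton is sound as far as it goes: passing (P1)--(P8) and (P10) to the closure by clamped limits works, and \emph{if} your one-step halving lemma were available, the nested squeeze, closedness of $\lhd'$, and (P3) would indeed deliver (KT) --- this part is cognate to the paper, which likewise builds a norm-Cauchy sequence of interpolants and extracts reflexivity of the limit from closedness. The genuine gap is the one-step lemma itself, which carries the entire content of the theorem, and the clipping-against-a-scalar-ruler technique cannot prove it. The axioms give no metric control over interpolants: (P9) only locates an interpolant somewhere in the order interval. If you carry out the slab scheme --- write $x=\rho_0+\sum_j\bigl[\bigl((x\vee\rho_{j-1})\wedge\rho_j\bigr)-\rho_{j-1}\bigr]$, interpolate slabwise (your clip computation $e_i\wedge\rho\lhd' e_{i+1}\wedge\rho$ is correct), and reassemble with (P6) --- you do obtain $x\lhd' x'\lhd' y'\lhd' y$, but each slab interpolant may sit anywhere in its width-$\delta$ slab, and at a point where $y-x$ is of size $L=\|y-x\|$ about $L/\delta$ slabs are active; the reassembled band therefore has width on the order of $L$ again, not $L/2$. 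More structurally, no derivation from the axioms shrinks the pointwise gap $q-p$ of a relation $p\lhd' q$ below the gaps of the relations it is built from: (P3) enlarges gaps, (P4) and Remark~\ref{rem: join} give the pointwise minimum of the input gaps, translation by reflexive scalars via (P6) preserves gaps, and scaling (P7) shrinks a gap only while displacing the pair toward a scalar. So your halving lemma for a fixed pair $x\lhd' y$ is morally equivalent to (KT) itself --- its only evident witness is a reflexive element in $[x,y]$ --- and deferring it as ``bookkeeping'' defers the whole theorem.

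The paper's proof draws its shrinking from a resource your reduction discards: the freedom in the \emph{approximating sequences}. Given $f\lhd' g$, it fixes $a_n\lhd b_n$ in $S$ with $a_n\to f$, $b_n\to g$, normalizes everything into $[0,1]$, and passes to subsequences with $\|a_{n+1}-a_n\|,\|b_{n+1}-b_n\|\le 1/2^n$. It then recursively chooses $c_{n+1}$ with $a_{n+1}\vee(c_n-1/2^n)\lhd c_{n+1}\lhd b_{n+1}\wedge(c_n+1/2^n)$: each step is anchored to a \emph{fresh} proximity pair $(a_{n+1},b_{n+1})$ supplied by the sequences, the corridor relation is seeded by the normalization (from $c_1\lhd b_1\le 1\le c_1+1$), and the geometric rate of the sequences is what keeps the corridors consistent. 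This makes $\{c_n\}$ Cauchy with $c_n-1/2^n\lhd c_{n+1}$, so $c=\lim c_n$ is reflexive by closedness and satisfies $f\le c\le g$. Once you compress the data to a single pair $x\lhd' y$, those fresh anchored pairs are gone, and nothing replaces them. A further architectural difference worth noting: the paper never verifies the proximity axioms for $\lhd'$ one by one. It proves the two-way characterization that $f\lhd' g$ iff some reflexive $c$ satisfies $f\le c\le g$ (the easy direction by exactly your clamping trick), shows the algebra $A$ of reflexive elements is essential in $D$, and concludes $D=D(A)$ and ${\lhd'}={\lhd_A}$, whence all the axioms, (P10), and (KT) follow at once from Remark~\ref{ex: prec_A}.
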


\begin{proof}
Let $A = \{ a \in D : a \lhd' a \}$ be the set of reflexive elements of $(D, \lhd')$. We first show that for any $ f, g \in D$, we have $f \lhd' g$ iff there is $c \in A$ with $f \le c \le g$.

Suppose that there is $c \in A$ with $f \le c \le g$. Since $c\lhd' c$ and $\lhd'$ is the closure of $\lhd$, there are sequences $\{c_n\}, \{d_n\}$ in $S$, both converging (uniformly) to $c$, such that $c_n \lhd d_n$ for each $n$. Because $S$ is uniformly dense in $D$, there is a sequence $\{a_n\}$ in $S$ converging to $f$. Since $f \le c$, the sequence $\{a_n \wedge c_n\}$ converges to $f \wedge c = f$. Therefore, if we replace $a_n$ by $a_n \wedge c_n$, we may assume that $a_n \le c_n$ for each $n$. Similarly, there is a sequence $\{b_n\}$ in $S$ converging to $g$ with $d_n \le b_n$ for each $n$. Thus, $a_n \le c_n \lhd d_n \le b_n$, so $a_n \lhd b_n$ for each $n$, and hence $f \lhd' g$. 

Conversely, suppose that $f \lhd' g$. Then there are sequences $\{a_n\},\{b_n\}$ in $S$ such that $\{a_n\}$ converges to $f$, $\{b_n\}$ converges to $g$, and $a_n\lhd b_n$ for each $n$. Because the two sequences are bounded, there are $r, s \in \mathbb{R}$ such that $r \le a_n, b_m, f, g \le s$ for each $n, m$. By replacing each term $t$ by $(t-r)/(s-r)$, we may assume $0 \le a_n, b_m, f, g \le 1$ for each $n, m$. 
By a standard analysis argument, there are subsequences $\{a_{n_k}\}$ and $\{b_{n_k}\}$ such that
\[
\|a_{n_k} - a_{n_{k+1}}\|, \|b_{n_k} - b_{n_{k+1}}\| \le \frac{1}{2^k} \hspace{.25in}\textrm{for each } k.
\]
Since $a_{n_k} \lhd b_{n_k}$ for each $k$, and because $a_{n_k} \to f$ and $b_{n_k} \to g$, we may replace the original sequences by these subsequences to assume that $\|a_{n} - a_{n+1}\|, \|b_{n} - b_{n+1}\| \le 1/2^n$ for each $n$. From this we see that $a_{n+1} \le a_n + 1/2^n$ and $b_{n} - 1/2^n \le b_{n+1}$ for each $n$. 

We produce a Cauchy sequence $\{c_n\}$ in $S$ satisfying $a_n \lhd c_n \lhd b_n$ and $c_{n} - 1/2^n \lhd c_{n+1} \lhd c_{n} + 1/2^n$ for each $n$. To start, since $a_1 \lhd b_1$, there is $c_1 \in S$ with $a_1 \lhd c_1 \lhd b_1$. Because $b_1 \le 1$, we have $c_1 \lhd 1$, and so $c_1 \lhd 1 \le c_1 + 1$. Thus, $c_1 \lhd c_1 + 1$, and hence $c_1 - 1/2 \lhd c_1 + 1/2$. Since $a_2 \le a_1 + 1/2 \lhd c_1 + 1/2$ and $c_1 - 1/2 \lhd b_1 - 1/2 \le b_2$, we have $a_2 \vee (c_1 - 1/2) \lhd b_2 \wedge (c_1 + 1/2)$ by (P4) and Remark~\ref{rem: join}. Therefore, there is $c_2$ with $a_2 \vee (c_1 - 1/2) \lhd c_2 \lhd b_2 \wedge (c_1 + 1/2)$. 
 Now suppose that $n \ge 2$ and there are $c_1, \dots, c_n \in S$ such that
\begin{enumerate}
\item[(i)$_n$]$a_m \lhd c_m \lhd b_m$ for each $m \le n$,
\item[(ii)$_n$]$c_m - 1/2^m \lhd c_{m+1} \lhd c_m + 1/2^m$ for each $m < n$,
\item[(iii)$_n$]$c_n - 1/2^n \lhd c_n + 1/2^n$.
\end{enumerate}
We have $a_{n+1} \le a_n + 1/2^n \lhd c_n + 1/2^n$ and $c_n - 1/2^n \lhd b_n - 1/2^n \le b_{n+1}$. Consequently, 
\[
a_{n+1} \vee (c_n - 1/2^n) \lhd b_{n+1} \wedge (c_n + 1/2^n).
\]
Therefore, there is $c_{n+1} \in S$ with
\[
a_{n+1} \vee (c_n - 1/2^n) \lhd c_{n+1} \lhd b_{n+1} \wedge (c_n + 1/2^n).
\]
From this we see that $a_{n+1} \lhd c_{n+1} \lhd b_{n+1}$ and $c_n- 1/2^n \lhd c_{n+1} \lhd c_n + 1/2^n$. Thus, (i)$_{n+1}$ and (iii)$_{n+1}$ are verified, as well as (ii)$_{n}$. By induction, we have produced the desired sequence, and (ii)$_{n}$ shows that $c_{n} - 1/2^n \le c_{n+1} \le c_{n} + 1/2^n$, so $\|c_{n+1} - c_n\| \le 1/2^n$. This yields that $\{c_n\}$ is Cauchy. If $c = \lim c_n$, then $c_n- 1/2^n \lhd c_{n+1}$ for each $n$ implies that $c \lhd' c$ and so $c \in A$. Moreover, $f = \lim a_n \le \lim c_n  \le \lim b_n = g$. Therefore, we have proved that $f \lhd' g$ iff there is $c \in A$ with $f \le c \le g$.

We next show that $D$ is isomorphic to $D(A)$. For this it is enough to observe that $A$ is essential in $D$ (see Remark~\ref{ex: prec_A}(2)).
Since $f \ne 0$ and $D$ is the Dedekind completion of $S$, there is $0 \ne b \in S$ with $0 \le b \le f$. Because $(S,\lhd)$ is a proximity $\bal$-algebra, there is $a \ne 0$ in $S$ with $a \lhd b$. From $a\lhd b$ it follows that $a\lhd' b$. Therefore, by the argument above, there is $c \in A$ with $a \le c \le b$, and hence $a \le c \le f$. Since $a \ne 0$, we have $c \ne 0$. Thus, $A$ is essential in $D$. 
Consequently, $D$ is the Dedekind completion of $A$ and ${\lhd'} = {\lhd_A}$, which implies that $\lhd'$ is a KT-proximity.
\end{proof}

We next utilize Dieudonn\'{e}'s lemma to give a choice-free proof of the equivalence of $\ubal$ and $\cpda$.

\begin{theorem} \label{thm: ubal = KT}
The functors $\dv{D}$ and $\dv{R}$ yield an equivalence between $\ubal$ and $\cpda$.
\end{theorem}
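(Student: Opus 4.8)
The plan is to bootstrap from the equivalence $\dv{D}\colon\bal\to\pda$ and $\dv{R}\colon\pda\to\bal$ recalled above, which is established directly and choice-free in \cite{BMO16}. Since $\ubal$ and $\cpda$ are \emph{full} subcategories of $\bal$ and $\pda$, it suffices to verify the two object-level inclusions $\dv{R}[\cpda]\subseteq\ubal$ and $\dv{D}[\ubal]\subseteq\cpda$. Granting these, the unit $\Id_{\bal}\Rightarrow\dv{R}\dv{D}$ and counit $\dv{D}\dv{R}\Rightarrow\Id_{\pda}$ of the equivalence restrict automatically: for $A\in\ubal$ the component $A\to\dv{R}\dv{D}(A)$ is an isomorphism between two objects of $\ubal$, hence an isomorphism in the full subcategory $\ubal$, and dually for $\cpda$. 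Thus $\dv{D}$ and $\dv{R}$ restrict to an equivalence $\ubal\simeq\cpda$.

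The inclusion $\dv{R}[\cpda]\subseteq\ubal$ is the easy half. Let $(D,\lhd)\in\cpda$ and set $A=\dv{R}(D,\lhd)$, the $\bal$-algebra of reflexive elements. As $D$ is Dedekind, it is uniformly complete, and $A\hookrightarrow D$ is isometric; hence any Cauchy sequence $\{a_n\}$ in $A$ has a limit $d\in D$. Each pair $(a_n,a_n)$ lies in the relation $\lhd$, and $(a_n,a_n)\to(d,d)$; since $\lhd$ is closed (Definition~\ref{def: category KT}) we get $(d,d)\in{\lhd}$, i.e.\ $d$ is reflexive and so $d\in A$. Therefore $A$ is uniformly complete and $A\in\ubal$.

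The crux is the reverse inclusion $\dv{D}[\ubal]\subseteq\cpda$: for $A\in\ubal$ the KT-proximity $\lhd_A$ on $D(A)$ (whose reflexive elements are exactly $A$) must be shown to be \emph{closed}. The main obstacle is that $A$ is generally \emph{not} uniformly dense in $D(A)$ (for instance, characteristic functions of regular opens cannot be uniformly approximated by continuous functions), so Theorem~\ref{thm: Dieudonne} cannot be fed $A$ directly. The trick I would use is to apply Theorem~\ref{thm: Dieudonne} to the proximity $\bal$-algebra $(D(A),\lhd_A)$ itself (which is a proximity $\bal$-algebra by Remark~\ref{ex: prec_A}), taking $S=D=D(A)$, trivially uniformly dense in itself. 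This produces the closure $\lhd'_A$ of $\lhd_A$ as a KT-proximity, and, as in the first part of the proof of Theorem~\ref{thm: Dieudonne}, it equals $\lhd_{A'}$, where $A'$ is its set of reflexive elements.

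It then remains to prove $A'=A$, for then $\lhd_A=\lhd_{A'}=\lhd'_A$ is closed, giving $\dv{D}(A)=(D(A),\lhd_A)\in\cpda$. The inclusion $A\subseteq A'$ is immediate from $\lhd_A\subseteq\lhd'_A$. For $A'\subseteq A$, suppose $d\lhd'_A d$; then $(d,d)$ lies in the closure of $\lhd_A$, so there are sequences $f_n\to d$ and $g_n\to d$ in $D(A)$ with $f_n\lhd_A g_n$, i.e.\ there are $a_n\in A$ with $f_n\le a_n\le g_n$. The squeeze $\|a_n-d\|\le\max\{\|f_n-d\|,\|g_n-d\|\}\to 0$ forces $a_n\to d$, and uniform completeness of $A$ then forces $d\in A$. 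This is the single place where the hypothesis $A\in\ubal$ is used, and it is precisely the closedness of $\lhd_A$ — powered by Dieudonn\'e's lemma together with uniform completeness — that constitutes the heart of the argument.
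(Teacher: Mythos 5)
Your proposal is correct and follows essentially the same route as the paper: the paper's proof likewise applies Theorem~\ref{thm: Dieudonne} to $(D(A),\lhd_A)$ itself (with $S=D=D(A)$, trivially uniformly dense), identifies the closure with $\lhd_B$ for $B$ the reflexive elements, and uses the same squeeze $b_n\le a_n\le b_n'$ plus uniform completeness of $A$ to conclude $B=A$, whence $\lhd_A$ is closed. The only difference is bookkeeping: the paper delegates the easy half ($\dv{R}$ landing in $\ubal$, well-definedness on morphisms, and the restriction of the equivalence) to \cite[Cor.~6.8]{BMO16}, whereas you spell these out directly via closedness of $\lhd$ and fullness of the subcategories, which is a harmless (indeed slightly more self-contained) variation.
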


\begin{proof}
As mentioned above, the functors $\dv{D} : \ubal \to \cpda$ and $\dv{R} : \cpda \to \ubal$ act on objects by sending $A$ to $(D(A), \lhd_A)$ and $(D, \lhd)$ to $\dv{R}(D, \lhd)$, respectively. As we pointed out in Remark~\ref{rem:BMO proof}, the proof in \cite[Thm.~6.6]{BMO16} that $\dv{D}$ is well defined on objects passes through $\KHaus$ and uses the Kat{\v{e}}tov-Tong theorem. We give a direct choice-free proof of this result, using Dieudonn\'{e}'s lemma instead.

Let $A\in\ubal$. Then $(D(A),\lhd_A)$ is a proximity Dedekind algebra by Remark~\ref{ex: prec_A}(1). Let $\lhd$ be the closure of $\lhd_A$ in $D(A)$. Applying Theorem~\ref{thm: Dieudonne} to $(D(A),\lhd_A)$ yields that $\lhd$ is a KT-proximity and hence is equal to $\lhd_B$, where $B = \dv{R}(D, \lhd)$. We show that $A=B$. If $a \in A$, then $a \lhd_A a$, so $a \lhd a$. This implies that $A \subseteq B$. To see the reverse inclusion, let $b \in B$. Then $(b, b)$ is an element of $\lhd$, so there is a sequence $\{(b_n, b_n')\}$ in $\lhd_A$ converging to $(b,b)$. Since $b_n \lhd_A b_n'$, there is $a_n \in A$ with $b_n \le a_n \le b_n'$. Therefore, $\{a_n\}$ converges to $b$. Since $A$ is uniformly complete, $b \in A$. This yields $B = A$, so ${\lhd}$ and ${\lhd_A}$ are equal. Thus, $\lhd_A$ is a closed proximity, and hence $\dv{D}$ is well defined on objects. That it is also well defined on morphisms and that $\dv{D}$ and $\dv{R}$ yield an equivalence of $\ubal$ and $\cpda$ follows from \cite[Cor.~6.8]{BMO16}. 
\end{proof}

Consequently, we obtain the following diagram that commutes up to natural isomorphism (see \cite[Lem.~7.3]{BMO16}).
\[
\begin{tikzcd}
\ubal \arrow[rr, shift left = .5ex, "\dv{D}"] \arrow[dr, shift left = .5ex, "Y"] && \cpda \arrow[ll, shift left = .5ex, "\dv{R}"] \arrow[dl, shift right = .5ex, "\End"'] \\
& \KHaus \arrow[ul, shift left = .5ex, "C"] \arrow[ur, shift right = .5ex, "N"']
\end{tikzcd}
\]

\begin{remark}
Dieudonn\'{e}'s lemma can also be used to give a simple description of the functor $N \circ \End : \pda \to \cpda$. Namely, for each $(D, \lhd) \in \pda$, we have that $N(\End(D, \lhd))$ is naturally isomorphic to $(D, \lhd')$, where $\lhd'$ is the closure of $\lhd$ in $D \times D$. This gives a choice-free description of the reflector $N \circ \End : \pda \to \cpda$.
\end{remark}

\section{Proximity Dedekind algebras and de Vries algebras} \label{sec: pda and dev}

As we saw in the previous section, if $X\in\KHaus$, then $N(X)$ is the Dedekind completion of $C(X)$, and $C(X)$ can be recovered from the Kat{\v{e}}tov-Tong algebra $(N(X),\lhd)$ as the algebra of reflexive elements. This led to a direct choice-free proof that $\ubal$ is equivalent to $\cpda$.

We next concentrate on the connection between $\RO(X)$ and $N(X)$. As we already pointed out, $\RO(X)$ can be identified with the boolean algebra $\Id(N(X))$ of idempotents of $N(X)$. As we will see in this section, the de Vries proximity on $\RO(X)$ is the restriction of the KT-proximity on $N(X)$. For this it is convenient to recall from Section~\ref{sec:ann} that $(\RO(X),\prec)$ is isomorphic to the de Vries algebra $(\ann(C(X)), \prec)$. Thus, it is sufficient to prove that $\ann(C(X))$ and $\Id(N(X))$ are isomorphic as boolean algebras and that the isomorphism preserves and reflects $\prec$,
 where $\prec$ on $\Id(N(X))$ is the restriction of the KT-proximity on $N(X)$. From this it will follow that $(\Id(N(X)),\prec)$ is a de Vries algebra. 

We will give a purely algebraic proof of this result by showing that if $(D,\lhd)$ is a proximity Dedekind algebra and $A$ is the $\bal$-algebra of its reflexive elements, then $(\Id(D),\prec)$ is isomorphic to $(\ann(A),\prec)$. This yields that $(\Id(D),\prec)$ is a de Vries algebra. We conclude the section by showing that associating with each proximity Dedekind algebra $(D, \lhd)$ the de Vries algebra $(\Id(D), \prec)$ defines a covariant functor $\Id$ from $\pda$ to $\dev$.

Let $D$ be a Dedekind algebra. Then $D$ is a Baer ring, where we recall that a commutative ring (with 1) is a {\em Baer ring} if each annihilator ideal is generated by a single idempotent. In fact, as was shown in \cite{BMO13b}, $A\in\bal$ is a Dedekind algebra iff $A\in\ubal$ and $A$ is Baer. However, the proof utilized Gelfand duality. For our purposes it is convenient to give a choice-free proof of this result. In this section we prove the left-to-right implication. The right-to-left implication will be proved in Corollary~\ref{cor: Baer ubal is Dedekind}.

\begin{lemma} \label{lem: Dedekind is Baer}
If $D$ is a Dedekind algebra, then $D \in \ubal$ and $D$ is a Baer ring.
\end{lemma}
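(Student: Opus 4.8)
The plan is to treat the two assertions separately, with uniform completeness being the easier part. That $D \in \ubal$ is essentially the content of the earlier-recorded fact that $\dbal$ is a full subcategory of $\ubal$: a relatively uniformly Cauchy sequence in a bounded $\ell$-algebra is order bounded, and in a Dedekind algebra every order-bounded monotone sequence has a limit, so the norm is complete. I would note this briefly and then devote the real work to showing that $D$ is a Baer ring, i.e.\ that every annihilator ideal $\ann_D(S)$ is generated by a single idempotent.

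For the Baer property I would first normalize the annihilator. Exactly as in the proof of Lemma~\ref{lem:ann}, $\ann_D(S)=\ann_D(\{|s|:s\in S\})$, so I may assume $S$ consists of nonnegative elements. Since $D$ is reduced (it has no nonzero nilpotents), the $f$-ring identities give the disjointness reformulation: for $a,s\ge 0$ one has $as=0$ iff $a\wedge s=0$ (indeed $(a\wedge s)^2\le as$). Hence, writing $I=\ann_D(S)$, an element $a$ lies in $I$ iff $|a|\wedge s=0$ for all $s\in S$; in particular $I$ is the polar of $S$, and $a\in I$ iff $a^+,a^-\in I$. I would then produce the generating idempotent as a supremum, exploiting Dedekind completeness: setting
\[
U=\{c\in D:0\le c\le 1,\ c\in I\},\qquad e=\bigvee U,
\]
which exists because $U$ is bounded above by $1$ and closed under finite joins (as $I$ is an $\ell$-ideal).

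The heart of the argument is three verifications, all resting on the interaction of suprema with meet and product in a Dedekind complete $\ell$-algebra, i.e.\ on the infinite distributive law $(\bigvee x_i)\wedge y=\bigvee(x_i\wedge y)$ (a standard and choice-free property of vector lattices, proved via the estimate $x\wedge y - x_i\wedge y\le x-x_i$). First, $e\in I$: for each $s\in S$ one computes $e\wedge s=\bigvee_{c\in U}(c\wedge s)=0$, so $es=0$. Second, $e$ is idempotent. Here I would observe that each $c\in U$ has a support idempotent $e_c=\bigvee_n(nc\wedge 1)$, which again lies in $U$ (the same distributive computation gives $e_c\wedge s=0$), and that $c\le e_c\le e$, so $e=\bigvee\{e_c:c\in U\}$ is a supremum of idempotents lying below it. The clean way to conclude is the chain $e^2\ge e_c\,e\ge e_c^2=e_c$ (multiply $e\ge e_c$ first by $e\ge 0$, then by $e_c\ge 0$), valid for every $c$; taking the supremum over $c$ gives $e\le e^2$, while $0\le e\le 1$ gives $e^2\le e$. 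Third, $I=eD$. Since $e\in I$ we have $eD\subseteq I$; conversely, once $e$ is idempotent we get $ce=c$ for every $c\in U$ (from $c(1-e)\le e(1-e)=0$), and for $a\in I$ with $a\ge 0$ any integer $n\ge\|a\|$ makes $\tfrac1n a\in U$, whence $ea=a$; splitting a general $a\in I$ as $a^+-a^-$ shows $a=ea\in eD$.

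The step I expect to be the main obstacle is proving that $e$ is idempotent, since this is where the ``projection property'' of Dedekind complete lattices is genuinely used, and where a naive approach (trying to prove $ce=c$ for $c\in U$ directly) turns out to be circular. The device of writing $e$ as a supremum of the support idempotents $e_c$ and then squeezing via $e^2\ge e_c$ is what makes the argument short and choice-free; the infinite distributive law is the only nontrivial lattice fact required, and it has an elementary proof. Everything else—the reduction to nonnegative $S$, the membership $e\in I$, and the identity $I=eD$—is then routine.
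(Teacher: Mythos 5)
Your construction is, at bottom, the same as the paper's: both take the generating idempotent to be $e=\bigvee\{a\in I : a\le 1\}$ for $I=\ann_D(S)$ (your restriction to $0\le c\le 1$ gives the same supremum), both reduce to $S$ nonnegative, and both finish $I=eD$ by the same scaling argument; the $\ubal$ part is dispatched in both cases as known. But there is one genuine structural gap in your execution: the appeal to reducedness of $D$. You need the implication $cs=0\Rightarrow c\wedge s=0$ (via $(c\wedge s)^2\le cs$) to make your meet-based computations work -- it is what puts each $c\in U$, and hence $e_c$ and $e$, into the lattice-disjoint position where the distributive law applies, and it underlies your polar description of $I$. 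At this point in the paper, reducedness is available only through the Birkhoff--Pierce citation, whose proof the authors explicitly note uses the axiom of choice; the paper's own choice-free proof (Remark~\ref{rem: no nilpotents}) rests on Corollary~\ref{cor: S(D) essential in D}, hence on Proposition~\ref{prop: S dense in A} applied to a Dedekind algebra, which is Baer precisely by the lemma you are proving. So as written your argument is either choice-dependent or circular within the paper's architecture. This is why the paper's proof is arranged to use only the harmless direction (disjointness implies zero product): it computes $se=\bigvee\{sa : a\in I,\, a\le 1\}=0$ via Johnson's lemma that multiplication by a nonnegative element distributes over existing suprema, never needing $as=0\Rightarrow a\wedge s=0$. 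Your proof is repairable in two ways: run the membership tests multiplicatively (e.g.\ $es=\bigvee_{c\in U}cs=0$ and $e_c s=\bigvee_n (nc\wedge 1)s=0$, using multiplicative rather than meet distributivity), or prove reducedness directly and choice-freely: if $0\le a$ and $a^2=0$, then from $(a-1/n)^+(a-1/n)^-=0$ one gets $((a-1/n)^+)^2=(a-1/n)^+a-\tfrac1n(a-1/n)^+=-\tfrac1n(a-1/n)^+\le 0$, forcing $(a-1/n)^+=0$, so $a\le 1/n$ for all $n$ and $a=0$ by archimedeanness.

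A secondary point: your claim that $e_c=\bigvee_n(nc\wedge 1)$ is idempotent is asserted, not proved, and in this axiomatic setting it is a real step -- the paper needs a citation (\cite[Lem.~6.3]{BCMO22a}) even for the analogous fact that $e=2e\wedge 1$ implies $e\in\Id(D)$. It is provable with your tools ($2e_c\wedge 1=\bigvee_n(2nc\wedge 1)=e_c$ by your distributive law and cofinality, whence $e_c\wedge(1-e_c)=0$ and so $e_c(1-e_c)=0$), but it should be said. Worth noting also that the paper's route to idempotency of $e$ is shorter than your squeeze through support idempotents: since $2e\in I$ and $I$ is an $\ell$-ideal (Lemma~\ref{lem:ann}), $2e\wedge 1$ lies in $I$ and is $\le 1$, hence $2e\wedge 1\le e$ by maximality, while $0\le e\le 1$ gives the reverse; so $e=2e\wedge 1$ directly, with no need for the elements $e_c$ at all. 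The maximality trick you judged ``circular'' in naive form is exactly what makes the paper's proof short.
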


\begin{proof}
That $D\in\ubal$ is easy to verify; 
see, e.g., \cite[Rem.~3.5]{BMO13b}. To show that $D$ is Baer, let $S \subseteq D$ and set $I = \ann_D(S)$. Because $D$ is Dedekind, $e := \bigvee \{ a \in I : a \le 1\}$ exists in $D$. We show that $e\in\Id(D)$ and that $e$ generates $I$. Let $s \in S$. As we saw in the proof of Lemma~\ref{lem:ann}, $\ann_D(S) = \ann_D(\{ |s| : s \in S\})$. Therefore, we may assume that $0 \le s$. Thus, by \cite[Lem.~1]{Joh65},
\[
se = s\bigvee \{ a : a \in I, a \le 1 \} = \bigvee \{ sa : a \in I, a \le 1 \} = 0,
\]
so $e \in I$. To see that $e$ is an idempotent, by \cite[Lem.~6.3]{BCMO22a} it is sufficient to observe that $e=2e \wedge 1$. We have $e \le 2e \wedge 1$ since $0 \le e \le 1$. Also, $2e \wedge 1\le 2e$ and $2e \in I$ since $e\in I$. Therefore, $2e \wedge 1 \in I$ because $I$ is an annihilator ideal, hence an $\ell$-ideal by Lemma~\ref{lem:ann}. But then $2e \wedge 1\le e$ by the definition of $e$. Thus, $e=2e\wedge 1$, and so $e \in \Id(D)$. It is left to show that $eD=I$. The inclusion $eD \subseteq I$ is clear since $e \in I$. For the reverse inclusion, let $a \in I$. As $D$ is bounded, there is $n$ with $|a| \le n$. Therefore, $|a|/n \le 1$. Since $|a|/n \in I$, by the definition of $e$, we have $|a|/n \le e$, so $|a| \le ne$. Since $eD = \ann_D((1-e)D)$, it is an $\ell$-ideal by Lemma~\ref{lem:ann}. Thus, $a \in eD$, so $I = eD$, and hence $D$ is Baer.
\end{proof}

\begin{remark} \label{rem: complete idempotents}
Let $D$ be a Dedekind algebra. As we just saw, $D$ is a Baer ring, and hence $\Id(D)$ is a complete boolean algebra (see, e.g., \cite[Prop.~1.4.1]{Berb72}). In fact, arguing as in the proof of Lemma~\ref{lem: Dedekind is Baer} gives that if $D \in \bal$ is Baer and $S \subseteq \Id(D)$, then the join of $S$ in $D$ is the join of $S$ in $\Id(D)$.
\end{remark}

For $A \in \bal$ we recall from Section~\ref{sec:ann} that $(\ann(A), \prec)$ is a de Vries algebra, where $I \prec J$ if $\ann_A(I) + J = A$.

\begin{theorem} \label{prop: sigma D an iso}
Let $(D, \lhd) \in \pda$ and $A = \mathfrak R(D,\lhd)$. The map $\sigma_D : \Id(D) \to \ann(A)$ given by $\sigma_D(e) = eD \cap A$ is a well-defined boolean isomorphism such that $e \lhd f$ iff $\sigma_D(e) \prec \sigma_D(f)$ for all $e,f \in \Id(D)$.
\end{theorem}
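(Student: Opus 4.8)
The plan is to treat the three assertions---well-definedness, the Boolean isomorphism, and the equivalence $e\lhd f\iff\sigma_D(e)\prec\sigma_D(f)$---separately, using throughout that since $(D,\lhd)\in\pda$, Remark~\ref{ex: prec_A}(2) makes $A=\dv{R}(D,\lhd)$ essential in $D$ with $D=D(A)$, that $D$ is a Baer ring with $\Id(D)$ a complete Boolean algebra (Lemma~\ref{lem: Dedekind is Baer}, Remark~\ref{rem: complete idempotents}), and that $D$ has no nonzero nilpotents. The basic dictionary I would establish first is that for $a\in A$ one has $a\in eD$ iff $ea=a$, and that for $0\le a\le 1$ this is further equivalent to $a\le e$ (the forward implication coming from $0\le a\le e\le 1\Rightarrow(1-e)a=0$, the converse from $a=ea\le e\cdot 1=e$).

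For well-definedness I would prove the sharper statement $\sigma_D(e)=eD\cap A=\ann_A((1-e)D\cap A)$, so that $\sigma_D(e)$ is visibly an annihilator ideal. The inclusion $\subseteq$ is the routine computation $a=ea\Rightarrow a(1-e)b=0$. The inclusion $\supseteq$ is the first place essentiality enters: if $a\in A$ kills $(1-e)D\cap A$ but $(1-e)a\ne 0$, then $(1-e)|a|>0$, so by essentiality there is $0<a'\le(1-e)|a|$ in $A$; multiplying by $e$ shows $ea'=0$, hence $a'=(1-e)a'\in(1-e)D\cap A$, whence $aa'=0$ and then $a'^2\le a'|a|=0$, contradicting that $D$ has no nonzero nilpotents. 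This also records that $\ann_A(\sigma_D(e))=\sigma_D(1-e)$, which is exactly the compatibility of $\sigma_D$ with complementation.

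To see $\sigma_D$ is a bijection I would exhibit the inverse $\tau(J)=\bigvee\{a\in J:0\le a\le 1\}$. That $\tau(\sigma_D(e))=e$ follows from the dictionary, since join-density of $A$ in $D$ gives $e=\bigvee\{a\in A:0\le a\le e\}$ and the latter set equals $\{a\in eD\cap A:0\le a\le 1\}$. That $\sigma_D(\tau(J))=J$ is where I reuse the construction in Lemma~\ref{lem: Dedekind is Baer}: writing $J=\ann_A(S)$ and $I=\ann_D(S)$, that lemma produces the idempotent $e_0=\bigvee\{a\in I:0\le a\le 1\}$ with $I=e_0D$; a density argument identifies $\tau(J)=e_0$, and then $\sigma_D(e_0)=e_0D\cap A=\ann_D(S)\cap A=\ann_A(S)=J$. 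Order is preserved in both directions ($e\le f\iff\sigma_D(e)\subseteq\sigma_D(f)$ via the join-description), so $\sigma_D$ is an order isomorphism of Boolean algebras, hence a Boolean isomorphism.

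The core of the theorem is the proximity equivalence, and I expect it to be the main obstacle. Using $\ann_A(\sigma_D(e))=\sigma_D(1-e)=(1-e)D\cap A$, the relation $\sigma_D(e)\prec\sigma_D(f)$ unwinds to $\bigl((1-e)D\cap A\bigr)+\bigl(fD\cap A\bigr)=A$. For the forward direction I would apply (KT) to $e\lhd f$ to obtain a reflexive $c$ with $e\le c\le f$; then $ec=e$ (from $e\le c\le 1$) and $fc=c$ (from $(1-f)c\le(1-f)f=0$ together with $c\ge 0$), so $1=(1-c)+c$ realizes $1$ in the sum, with $1-c\in(1-e)D\cap A$ and $c\in fD\cap A$. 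For the converse, given $1=u+v$ with $u\in(1-e)D\cap A$ and $v\in fD\cap A$, I would replace $v$ by its clamp $w=(v\vee 0)\wedge 1\in A$ (still reflexive); distributing multiplication by the positive idempotents $e$ and $1-f$ over the lattice operations yields $ew=e$ and $(1-f)w=0$. The disjointness computations $e\wedge(1-w)=0$ and $w\wedge(1-f)=0$ (each square being dominated by a vanishing product) together with $a\wedge b=0\Rightarrow a+b=a\vee b$ then give $e\le w\le f$, and (P3) applied to $e\le w\lhd w\le f$ yields $e\lhd f$. The delicate points throughout are the repeated passage between the ideal-theoretic data in $A$ and the idempotent data in $D$, which rely essentially on essentiality, the absence of nilpotents, and the $f$-ring distributivity of positive multiplication over $\vee,\wedge$.
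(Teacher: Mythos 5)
Your proof is correct, and it shares the paper's overall skeleton: reduce $\sigma_D(e)\prec\sigma_D(f)$ to the ideal-sum condition $\bigl((1-e)D\cap A\bigr)+\bigl(fD\cap A\bigr)=A$, get the forward direction from (KT) via $1=(1-c)+c$, and get the converse by producing a reflexive element squeezed between $e$ and $f$. But three of your sub-arguments genuinely differ from the paper's. First, where you prove $eD\cap A=\ann_A((1-e)D\cap A)$ by an essentiality-plus-reducedness contradiction (choosing $0<a'\le(1-e)|a|$ and forcing $a'^2=0$), the paper instead proves the general identity $\ann_A(I\cap A)=\ann_D(I)\cap A$ for \emph{every} $\ell$-ideal $I$ of $D$, by writing $|x|$ as the join of the elements of $A$ below it and distributing multiplication over that join; the paper's identity is then reused verbatim in the forward proximity step, where your recorded consequence $\ann_A(\sigma_D(e))=\sigma_D(1-e)$ plays the same role. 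Your version is more hands-on; the paper's is more reusable. Second, you establish bijectivity by exhibiting the explicit inverse $\tau(J)=\bigvee\{a\in J:0\le a\le 1\}$ and matching it with the idempotent constructed in Lemma~\ref{lem: Dedekind is Baer}, whereas the paper shows $\sigma_D$ is an order embedding (via the same join-descriptions) and onto (via the Baer property of $D$); the two are equivalent in content, yours being slightly more constructive at the cost of the extra density argument identifying $\tau(J)$ with that idempotent. Third, and most substantively, in the converse proximity direction the paper cites \cite[Lem.~A.2(1)]{BCM21d} to obtain $a\in fD\cap A$ with $0\le a\le 1$ and $1-a\in(1-e)D\cap A$, concluding $e\le a\le f$ directly from $a=fa\le f$ and $1-a\le 1-e$; you instead re-derive exactly this datum in situ by clamping $w=(v\vee 0)\wedge 1$ and then using the reduced-$f$-ring observation that a nonnegative meet whose square is dominated by a vanishing product must be zero, together with $a+b=(a\vee b)+(a\wedge b)$. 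Your route makes the proof self-contained where the paper leans on an external lemma, and all your computations ($ew=e$, $(1-f)w=0$, and the passage from $e\wedge(1-w)=0$ to $e\le w$) are valid in a reduced $f$-ring, so nothing needs repair.
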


\begin{proof}
We first show that for each $\ell$-ideal $I$ of $D$, we have $\ann_A(I \cap A) = \ann_D(I) \cap A$.
The inclusion $\supseteq$ is clear. For the reverse inclusion, let $a \in \ann_A(I \cap A)$ and $x \in I$. By Remark~\ref{ex: prec_A}(2), $D$ is the Dedekind completion of $A$, so $A$ is join-dense in $D$, and hence $|x| = \bigvee \{ b \in A : 0 \le b \le |x|\}$. Therefore, $|a||x| = \bigvee \{ |a|b : 0 \le b \le |x|\}$. If $b \in A$ with $0 \le b \le |x|$, then $b \in I \cap A$, so $ab = 0$ as $a \in \ann_A(I \cap A)$. Thus, $|a|b = 0$, so $|a||x| = 0$, and hence $ax = 0$. Consequently, $a \in \ann_D(I) \cap A$.

Let $e \in \Id(D)$. By the previous paragraph,
\[
eD \cap A = \ann_D((1-e)D) \cap A = \ann_A((1-e)D \cap A),
\]
so $eD \cap A\in\ann(A)$, and hence $\sigma_D$ is well defined.

We next show that $\sigma_D$ is an order isomorphism. Let $e,f \in \Id(D)$ with $e \le f$. Then $e = ef$. Suppose $a \in eD \cap A$. We have $a = ea$, so $a = efa = fea$, and so $eD \cap A \subseteq fD \cap A$. Conversely, suppose $eD \cap A \subseteq fD \cap A$. Since $A$ is join-dense in $D$ and $eD \cap A$, $fD \cap A$ are $\ell$-ideals in $A$,
\begin{align*}
e &= \bigvee \{ a \in A : 0 \le a \le e\} = \bigvee\{ a \in eD \cap A : 0 \le a \le e\}, \\
f &= \bigvee \{ a \in A : 0 \le a \le f \} = \bigvee\{ a \in fD \cap A : 0 \le a \le f\}.
\end{align*}
Therefore, $e \le f$. To see that $\sigma_D$ is onto, let $I \in \ann(A)$. Then $I = \ann_A(S) = \ann_D(S) \cap A$ for some $S \subseteq A$.
By Lemma~\ref{lem: Dedekind is Baer}, $D$ is a Baer ring, so there is $e \in \Id(D)$ with $\ann_D(S) = eD$. Thus, $I = eD \cap A$. Consequently, $\sigma_D$ is an order isomorphism, hence a boolean isomorphism.

Finally, to see that $e \lhd f$ iff $\sigma_D(e) \prec \sigma_D(f)$, first suppose that $e \lhd f$. By (KT), there is $a \in A$ with $e \le a \le f$. Therefore, $a \in fD \cap A$. Also, $0 \le 1-a \le 1-e$, so $1-a \in (1-e)D \cap A$. This implies $((1-e)D \cap A) + (fD \cap A) = A$. By the first paragraph, $\ann_A(eD \cap A) = \ann_D(eD) \cap A = (1-e)D \cap A$. Thus, $\ann_A(eD \cap A) + (fD \cap A) = A$, and so $eD \cap A \prec fD \cap A$.

For the converse, suppose that $eD \cap A \prec fD \cap A$. Then $((1-e)D \cap A) + (fD\cap A) = A$. 
By \cite[Lem.~A.2(1)]{BCM21d}, if $I,J$ are $\ell$-ideals of $A$ with $I+J = A$, then there is $a \in I$ with $0 \le a \le 1$ and $1-a \in J$. Therefore, 
there is $a \in fD \cap A$ with $0 \le a \le 1$ and $1-a \in (1-e)D \cap A$. Since $a \in fD$, we have $a = fa$, so $a = fa \le f \cdot 1 = f$ because $a \le 1$. A similar argument shows $1-a \le 1-e$, so $e \le a$. Thus, $e \le a \le f$, and hence $e \lhd f$.
\end{proof}

\begin{remark}
Let $(D,\lhd)$ and $A$ be as in Theorem~\ref{prop: sigma D an iso}. Since $D$ is a Baer ring, we have
$\ann(D) = \{eD : e \in \Id(D)\}$. Consequently, $\ann(D)$ and $\ann(A)$ are isomorphic boolean algebras via the map that sends $eD$ to $eD \cap A$.
\end{remark}

Let $(D,\lhd)\in\pda$. To prove that the restriction of $\lhd$ to $\Id(D)$ is a de Vries proximity, we need the following lemma.

\begin{lemma} \label{lem: M4}
Let $A \in \bal$.
\begin{enumerate}[$(1)$]
\item Let $I$ be an $\ell$-ideal of $A$. If $0 \le a \in I$ and $0 < \varepsilon \in \mathbb{R}$, then $\ann_A((a-\varepsilon)^+) + I = A$.
\item Let $D$ be the Dedekind completion of $A$. If $f \in \Id(D)$ and $a \in A$ with $0 \le a \le f$, then for each $\varepsilon > 0$ there is $e \in \Id(D)$ with $e \lhd f$ and $a \le e + \varepsilon$.
\end{enumerate}
\end{lemma}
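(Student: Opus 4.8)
The plan is to prove (1) by an explicit $f$-ring computation and then to bootstrap (2) from (1) together with the Baer structure of $D$ supplied by Lemma~\ref{lem: Dedekind is Baer}. Throughout I read $\lhd$ as $\lhd_A$, so that $e \lhd f$ means there is $x \in A$ with $e \le x \le f$.

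For (1), the key element is $c := \tfrac{1}{\varepsilon}(a \wedge \varepsilon)$. Since $0 \le a \wedge \varepsilon \le a \in I$ and $I$ is an $\ell$-ideal, we get $a \wedge \varepsilon \in I$, hence $c \in I$. I would then compute $1 - c = \tfrac{1}{\varepsilon}\bigl(\varepsilon - (a \wedge \varepsilon)\bigr) = \tfrac{1}{\varepsilon}(\varepsilon - a)^+ = \tfrac{1}{\varepsilon}(a-\varepsilon)^-$, using the lattice identity $\varepsilon - (a \wedge \varepsilon) = (\varepsilon - a) \vee 0$. Because $(a-\varepsilon)^+(a-\varepsilon)^- = 0$ holds in any $f$-ring, multiplying gives $(1-c)(a-\varepsilon)^+ = 0$, so $1 - c \in \ann_A((a-\varepsilon)^+)$. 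Then $1 = (1-c) + c \in \ann_A((a-\varepsilon)^+) + I$, which yields $\ann_A((a-\varepsilon)^+) + I = A$.

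For (2), set $b := (a - \varepsilon)^+ \in A$. Since idempotents of a $\bal$-algebra satisfy $0 \le f \le 1$, we have $0 \le a \le 1$, so $\|b\| \le 1$. As $D$ is Baer, write $\ann_D(b) = gD$ with $g \in \Id(D)$ and put $e := 1 - g$, the support idempotent of $b$. From $bg = 0$ we get $b = be$, whence $b = be \le \|b\|\, e \le e$; thus $(a-\varepsilon)^+ \le e$, that is, $a \le e + \varepsilon$. It remains to show $e \lhd f$, and here I would reuse the element $c = \tfrac1\varepsilon(a\wedge\varepsilon) \in A$ from the proof of (1). First, $0 \le a \le f$ with $f$ idempotent forces $a(1-f) = 0$, and since $0 \le a \wedge \varepsilon \le a$ this gives $(a \wedge \varepsilon)(1-f) = 0$, so $c = cf \le f$ (using $c \le 1$). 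Second, by the computation in (1), $1 - c \in \ann_A(b) \subseteq \ann_D(b) = gD$, so $e(1-c) = (1-g)(1-c) = 0$, i.e. $ec = e$; hence $e = ec \le c$. Combining, $e \le c \le f$ with $c \in A$, so $e \lhd f$.

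The main obstacle is the proximity clause $e \lhd f$ in (2): producing a genuine element of $A$ sandwiched between $e$ and $f$. The insight I would exploit is that the very element $c$ witnessing the annihilator decomposition in (1) also does this job, since $1 - c \in \ann_D(b)$ pins $c$ above the support idempotent $e$, while $c \le f$ comes from $a \le f$. Once this is seen, (2) follows with no appeal to topology or to maximal ideals, keeping the argument choice-free. The degenerate case $\varepsilon \ge 1$, where $b = 0$ and $e = 0$, is absorbed by the same computation.
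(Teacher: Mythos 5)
Your proposal is correct, and while part (1) is essentially the paper's argument, part (2) follows a genuinely different route. In (1) your witness $c=\varepsilon^{-1}(a\wedge\varepsilon)$ is exactly $1-b$ for the paper's witness $b=\varepsilon^{-1}(a-\varepsilon)^-$; you derive $c\in I$ directly from the $\ell$-ideal property and get $(1-c)(a-\varepsilon)^+=0$ from $(a-\varepsilon)^+(a-\varepsilon)^-=0$, which is the same computation with the bookkeeping reversed (arguably a bit cleaner than the paper's chain of vector-lattice identities). In (2) the paper sets $I=fD\cap A$, applies (1), uses Baerness of $D$ to realize the double annihilator $\ann_A(\ann_A((a-\varepsilon)^+))$ as $eD\cap A$, and then concludes $e\lhd f$ by translating the relation $(eD\cap A)\prec(fD\cap A)$ in the de Vries algebra $(\ann(A),\prec)$ back through the isomorphism $\sigma_D$ of Theorem~\ref{prop: sigma D an iso}. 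You bypass $\ann(A)$ and $\sigma_D$ entirely: Baerness (Lemma~\ref{lem: Dedekind is Baer}) hands you the support idempotent $e=1-g$ of $b=(a-\varepsilon)^+$, the bound $a\le e+\varepsilon$ follows from $b=be\le e$, and you certify $e\lhd f$ by exhibiting the explicit interpolant $c=\varepsilon^{-1}(a\wedge\varepsilon)\in A$ with $e\le c\le f$; the verifications ($a=af$ from $0\le a\le f$ and $f^2=f$, hence $c=cf\le f$; and $1-c\in\ann_D(b)=gD$, hence $e=ec\le c$) are correct $f$-ring computations, including the degenerate case $\varepsilon\ge 1$. What each approach buys: the paper's detour through $\sigma_D$ makes the lemma a direct corollary of the already-established de Vries isomorphism and phrases the conclusion for the ambient proximity $\lhd$ on any $(D,\lhd)\in\pda$ with $A=\dv{R}(D,\lhd)$, which is how the lemma is consumed in Theorem~\ref{thm: prox}(2) and Proposition~\ref{thm: functor D}; your proof is shorter, self-contained, and fully explicit. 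One small point to note: you read $\lhd$ as $\lhd_A$, whereas in the applications $\lhd$ is a given KT-proximity with $A$ its reflexive elements. This causes no gap, since your sandwich $e\le c\le f$ with $c$ reflexive yields $e\lhd c\lhd f$, and hence $e\lhd f$, by (P3) for any such proximity, so your argument covers the intended generality and remains choice-free.
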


\begin{proof}
(1) To show that $\ann_A((a-\varepsilon)^+) + I = A$, it is sufficient to find $b \in \ann_A((a-\varepsilon)^+)$ such that $1-b \in I$. Therefore, we need $b$ such that $b(a-\varepsilon)^+ = 0$ and $1-b \in I$. We show that $b = \varepsilon^{-1}(a-\varepsilon)^-$ is the desired element. We have $(a-\varepsilon)^+(a-\varepsilon)^- = 0$, so $(a-\varepsilon)^+[\varepsilon^{-1}(a-\varepsilon)^-] = 0$. Thus, $b(a-\varepsilon)^+ = 0$. To see that $1-b \in I$, using standard vector lattice identities, 
\begin{align*}
b &= \varepsilon^{-1}(a-\varepsilon)^- = \varepsilon^{-1}((\varepsilon - a) \vee 0) = (1 - \varepsilon^{-1}a) \vee 0 \\
&= 1 + (-\varepsilon^{-1}a \vee -1) = 1 - (\varepsilon^{-1}a \wedge 1).
\end{align*}
Since $a \in I$, we have $\varepsilon^{-1}a \in I$. From $a \ge 0$ it follows that $0 \le \varepsilon^{-1}a \wedge 1 \le \varepsilon^{-1}a$, so $\varepsilon^{-1}a \wedge 1 \in I$. Consequently, $1-b = \varepsilon^{-1}a \wedge 1 \in I$.

(2) Set $I = fD \cap A$, an $\ell$-ideal of $A$. If $0 \le a \le f$, then $a \in I$. By (1), $\ann_A((a-\varepsilon)^+) + I = A$. By Lemma~\ref{lem: Dedekind is Baer}, $D$ is Baer ring, so there is $e \in \Id(D)$ with
\[
\ann_A(\ann_A((a-\varepsilon)^+)) = \ann_D(\ann_A((a-\varepsilon)^+)) \cap A = eD \cap A.
\]
Thus, $\ann_A(eD\cap A) = \ann_A((a-\varepsilon)^+)$, and hence $\ann_A(eD \cap A) + I = A$. This means that $(eD \cap A) \prec (fD \cap A)$, so $e \lhd f$ by Theorem~\ref{prop: sigma D an iso}. Moreover, $(a-\varepsilon)^+ \in \ann_A(\ann_A((a-\varepsilon)^+)) = eD \cap A$, so $(a - \varepsilon)^+ e = (a - \varepsilon)^+$. Because $0 \le a \le f \le 1$ and $\varepsilon > 0$, we have $(a - \varepsilon)^+ \le a \le 1$, which together with $(a - \varepsilon)^+ e = (a - \varepsilon)^+$ yields $(a - \varepsilon)^+ \le e$. Thus, $a - \varepsilon \le (a - \varepsilon)^+ \le e$, so $a \le e + \varepsilon$.
\end{proof}

We are ready to prove the main result of this section.

\begin{theorem} \label{thm: prox}
\hfill
\begin{enumerate}[$(1)$]
\item Let $(D, \lhd) \in \pda$ and $A = \mathfrak R(D,\lhd)$. Then the restriction $\prec$ of $\lhd$ to $\Id(D)$ is a de Vries proximity on $\Id(D)$ and  $\sigma_D : \Id(D) \to \ann(A)$ is a de Vries isomorphism.
\item There is a covariant functor $\Id : \pda \to \dev$ which sends $(D, \lhd)$ to $(\Id(D), \prec)$ and a proximity morphism $\alpha : (D, \lhd) \to (D', \lhd')$ to its restriction $\alpha|_{\Id(D)}$.
\end{enumerate}
\end{theorem}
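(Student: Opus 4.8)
The plan is to derive both parts from Theorem~\ref{prop: sigma D an iso} together with the functors $\dv{R}\colon\pda\to\bal$ and $\ann\colon\bal\to\dev$ already available, so that almost no axiom needs to be checked by hand. For (1), put $A=\dv{R}(D,\lhd)$. Theorem~\ref{prop: sigma D an iso} gives that $\sigma_D\colon\Id(D)\to\ann(A)$ is a boolean isomorphism with $e\lhd f$ iff $\sigma_D(e)\prec\sigma_D(f)$; since $\Id(D)$ is a complete boolean algebra (Remark~\ref{rem: complete idempotents}), $\sigma_D$ is an isomorphism of complete boolean algebras that carries the restriction $\prec$ of $\lhd$ to the de Vries relation $\prec$ on $\ann(A)$. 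Because $(\ann(A),\prec)$ is a de Vries algebra and each of (DV1)--(DV7) is expressed purely in terms of the boolean operations and the proximity, all of which $\sigma_D$ preserves and reflects, the axioms transport verbatim to $(\Id(D),\prec)$. Hence $(\Id(D),\prec)$ is a de Vries algebra and $\sigma_D$ is a de Vries isomorphism; this part requires essentially no new computation.

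For (2), the strategy is to show that $\Id$, defined on morphisms by restriction, is naturally isomorphic through the maps $\sigma$ to the composite functor $\ann\circ\dv{R}\colon\pda\to\dev$. Given a proximity morphism $\alpha\colon(D,\lhd)\to(D',\lhd')$, set $\beta=\dv{R}(\alpha)=\alpha|_A\colon A\to A'$ and, for $e\in\Id(D)$, let $f:=\sigma_{D'}^{-1}\big(\ann(\beta)(\sigma_D(e))\big)\in\Id(D')$. The crux is the identity $\alpha(e)=f$. Granting it, $\alpha$ sends idempotents to idempotents, the square $\sigma_{D'}\circ(\alpha|_{\Id(D)})=\ann(\beta)\circ\sigma_D$ commutes, and therefore $\alpha|_{\Id(D)}=\sigma_{D'}^{-1}\circ\ann(\beta)\circ\sigma_D$ is a composite of a de Vries morphism with two de Vries isomorphisms, hence a de Vries morphism $(\Id(D),\prec)\to(\Id(D'),\prec)$. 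Functoriality of $\Id$ (preservation of identities and of $\star$-composition) then follows formally from functoriality of $\ann\circ\dv{R}$ and naturality of $\sigma$.

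The identity $\alpha(e)=f$ is the main obstacle. Using (PM4), the (KT) axiom, monotonicity of $\alpha$, and Remark~\ref{rem: join}, one rewrites $\alpha(e)=\bigvee\{\beta(c):c\in A,\ 0\le c\lhd e\}$ as a join in $D'$. For $\alpha(e)\le f$: each such $c$ satisfies $c\le e$, so $c=ec\in eD\cap A$; thus $\beta(c)\in\beta[eD\cap A]\subseteq\sigma_{D'}(f)=fD'\cap A'$, giving $\beta(c)=f\beta(c)\le f$, and hence $\alpha(e)\le f$. The reverse inequality is the delicate step. Invoking axiom (M4) for $\ann(\beta)$ and transporting it through $\sigma$, one writes $f=\bigvee\{f_g:g\in\Id(D),\ g\lhd e\}$ with $f_g=\sigma_{D'}^{-1}(\ann(\beta)(\sigma_D(g)))$, reducing the claim to $f_g\le\alpha(e)$ for each $g\lhd e$. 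Using (DV6) in $(\Id(D),\prec)$ (available by part (1)), interpolate an idempotent $g\lhd g_1\lhd e$; then $\sigma_D(g)\prec\sigma_D(g_1)$ reads $\ann_A(gD\cap A)+(g_1D\cap A)=A$, so by \cite[Lem.~A.2(1)]{BCM21d} there is $a\in g_1D\cap A$ with $0\le a\le 1$, $a\le g_1$, and $1-a\in\ann_A(gD\cap A)$. Since $\beta$ is a ring homomorphism, $\beta(1-a)=1-\alpha(a)$ annihilates $\beta[gD\cap A]$, whence $f_gD'\cap A'=\ann_{A'}\ann_{A'}(\beta[gD\cap A])\subseteq\ann_{A'}(1-\alpha(a))$; consequently every $a''\in A'$ with $0\le a''\le f_g$ satisfies $a''=a''\alpha(a)\le\alpha(a)\le\alpha(g_1)\le\alpha(e)$, and joining over such $a''$ yields $f_g\le\alpha(e)$. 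This closes the gap, so $f\le\alpha(e)$ and thus $\alpha(e)=f$.

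Finally, I note that once $\alpha$ is known to preserve idempotents one could instead verify (M1)--(M4) for $\alpha|_{\Id(D)}$ by hand: (M1) is (PM1); (M2) is (PM2), since meets of idempotents are computed in $D$; (M3) follows from (PM3) together with the identity $-\alpha(-e)=1-\alpha(1-e)=\alpha(e^*)^*$, obtained by applying (PM7) to the reflexive element $1$; and (M4) is the join formula above, where Lemma~\ref{lem: M4}(2) is used to replace reflexive approximants by idempotents. Routing the argument through $\ann\circ\dv{R}$ is nonetheless cleaner and yields functoriality for free, so I would present that path.
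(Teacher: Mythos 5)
Your proposal is correct, and while your part (1) coincides with the paper's (both simply transport the de Vries axioms along the isomorphism of Theorem~\ref{prop: sigma D an iso}), your part (2) takes a genuinely different route. The paper verifies (M1)--(M4) for $\alpha|_{\Id(D)}$ by hand: idempotent-preservation via $e = 1 \wedge 2e$ together with (PM2) and (PM5), axiom (M3) via the identity $-\alpha(-e) = \alpha(e^*)^*$, and both (M4) and preservation of $\star$-composition via the $\varepsilon$-approximation Lemma~\ref{lem: M4}(2), which it invokes twice. You instead prove the single naturality identity $\alpha(e) = \sigma_{D'}^{-1}\bigl(\ann(\dv{R}(\alpha))(\sigma_D(e))\bigr)$ and then obtain well-definedness on morphisms and functoriality formally from the functors $\ann$ and $\dv{R}$, both already available (Section~\ref{sec:ann} and \cite{BMO16}); idempotent-preservation then comes for free rather than from the $e = 1 \wedge 2e$ trick. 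Your computation of the delicate inequality $f_g \le \alpha(e)$ --- interpolating $g \lhd g_1 \lhd e$ by (DV6) from part (1) and producing $a$ with $1-a \in \ann_A(gD\cap A)$ via \cite[Lem.~A.2(1)]{BCM21d} --- is not in the paper, but it runs parallel to the converse direction in the paper's proof of Theorem~\ref{prop: sigma D an iso}, and it buys you more: you establish the natural isomorphism $\Id \cong \ann \circ \dv{R}$, which strengthens the merely object-level commutativity checked in Section~\ref{sec: summary}, and you avoid Lemma~\ref{lem: M4}(2) altogether. Three small glosses are worth making explicit if you write this up: writing $\beta = \dv{R}(\alpha)$, the unwinding $\ann(\beta)(\sigma_D(e)) = \ann_{A'}\ann_{A'}(\beta[eD \cap A])$ requires the observation that the annihilator of a set equals the annihilator of the archimedean $\ell$-ideal it generates (because $\ann_{A'}(x)$ is itself an archimedean $\ell$-ideal by Lemma~\ref{lem:ann}); the join $\bigvee\{f_g : g \lhd e\}$ obtained by transporting (M4) for $\ann(\beta)$ through the boolean isomorphism $\sigma_{D'}^{-1}$ is a join in $\Id(D')$, which agrees with the join in $D'$ by Remark~\ref{rem: complete idempotents}; and the ``formal'' derivation of functoriality uses the easy, (M4)-based fact that $\star$-composition with a de Vries isomorphism is ordinary composition. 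None of these is a genuine gap, and your first direction ($\alpha(e) \le f$, using (PM4), (KT), and Remark~\ref{rem: join} to write $\alpha(e)$ as a join over reflexive $0 \le c \lhd e$) is also sound.
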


\begin{proof}
(1) By Theorem~\ref{prop: sigma D an iso}, $\sigma_D:\Id(D)\to\ann(A)$ is a boolean isomorphism and $e \prec f$ iff $\sigma_D(e) \prec \sigma_D(f)$. Since $(\ann(A), \prec)$ is a de Vries algebra, it follows that $(\Id(D), \prec)$ is a de Vries algebra and $\sigma_D$ is a de Vries isomorphism.

(2) By (1), $(\Id(D), \prec) \in \dev$. We first show that $\alpha$ sends idempotents to idempotents. Let $e \in \Id(D)$. Then $e = 1 \wedge 2e$, so $\alpha(e) = \alpha(1 \wedge 2e) = \alpha(1) \wedge \alpha(2e) = 1 \wedge 2\alpha(e)$. Therefore, $\alpha(e) \in \Id(D')$, so $\alpha|_{\Id(D)}$ is a well-defined map from $\Id(D)$ to $\Id(D')$. We next show that $\gamma:=\alpha|_{\Id(D)}$ is a de Vries morphism. The first two axioms of a de Vries morphism hold for $\gamma$ since they hold for $\alpha$. For (M3), suppose that $e \prec f$. Then $-\alpha(-e) \prec \alpha(f)$. But $-\alpha(-e) = \alpha(e^*)^*$ because 
\begin{equation}
\alpha(e^*)^* = 1 - \alpha(1-e) = 1 - (1 + \alpha(-e)) = -\alpha(-e). \tag{5.1} \label{5.1}
\end{equation}
Therefore, $\gamma(e^*)^* \prec \gamma(f)$. Finally, to show (M4), let $f \in \Id(D)$. Then $\gamma(f) = \alpha(f) = \bigvee \{ \alpha(g) : g \in D, g \prec f\}$. By (KT)
we have $\alpha(f) = \bigvee \{ \alpha(a) : a \in \dv{R}(D, \lhd),  a \le f\}$. Let $0 < \varepsilon \in \mathbb{R}$. Since $D$ is the Dedekind completion of $\dv{R}(D, \lhd)$ by Remark~\ref{ex: prec_A}(2), it follows from Lemma~\ref{lem: M4}(2) that for each $a \in \dv{R}(D)$ with $a \le f$ there is $e \in \Id(D)$ with $e \prec f$ and $a \le e + \varepsilon$. Thus, $\alpha(a) \le \alpha(e) + \varepsilon = \gamma(e) + \varepsilon$, and so
\begin{align*}
\gamma(f) &= \bigvee \{ \alpha(a) : a \in A, a \le f\} \le \bigvee \{ \gamma(e) + \varepsilon : e \in \Id(D), e \prec f \} \\
&= \bigvee \{ \gamma(e) : e \in \Id(D), e \prec f \} + \varepsilon\le \gamma(f) + \varepsilon.
\end{align*}
Since this is true for all $\varepsilon$, we get $\gamma(f) = \bigvee \{ \gamma(e)  : e \in \Id(D), e \prec f \}$
and so (M4) holds. Consequently, $\gamma$ is a de Vries morphism, and we set $\Id(\alpha) = \alpha|_{\Id(D)}$.

It is clear that if $\alpha$ is an identity proximity morphism, then $\Id(\alpha)$ is an identity de Vries morphism. It is left to show that $\Id$ preserves composition. Let $\alpha_1 : (D_1, \lhd_1) \to (D_2, \lhd_2)$ and $\alpha_2 : (D_2, \lhd_2) \to (D_3, \lhd_3)$ be proximity morphisms, and let $\gamma_i = \alpha_i|_{\Id(D_i)}$. If $f \in \Id(D_1)$, then
\[
(\gamma_2 \star \gamma_1)(f) = \bigvee \{ \gamma_2\gamma_1(e) : e \in \Id(D_1), e \prec_1 f\}
\]
and
\[
(\alpha_2\star \alpha_1)(f) = \bigvee \{ \alpha_2\alpha_1(a) : a \in \dv{R}(D_1, \lhd_1), a \le f\}.
\]
Since $e \prec_1 f$ implies that there is $a \in \dv{R}(D_1, \lhd_1)$ with $e\le a\le f$, it follows that $(\gamma_2 \star \gamma_1)(f) \le (\alpha_2\star \alpha_1)(f)$. For the reverse inequality, if $a\le f$ and $\varepsilon>0$, then as above, there is $e\in\Id(D_1)$ with $e\prec_1 f$ and $a \le e + \varepsilon$. Therefore, $\alpha_2\alpha_1(a) \le \alpha_2\alpha_1(e) + \varepsilon$, and since this holds for all $\varepsilon$, we conclude that $(\alpha_2\star \alpha_1)(f) \le (\gamma_2 \star \gamma_1)(f)$. Hence, $(\gamma_2 \star \gamma_1)(f) = (\alpha_2\star \alpha_1)(f)$ for each $f\in\Id(D_1)$.
Thus, $(\alpha_2 \star \alpha_1)|_{\Id(D_1)} = \alpha_2|_{\Id(D_2)} \star \alpha_1|_{\Id(D_1)}$, and so $\Id$ is a covariant functor.
\end{proof}

\section{De Vries algebras and proximity Baer-Specker algebras}\label{sec: proximity Baer-Specker}

To define a functor from $\dev$ to $\cpda$, we need to introduce the concept of Baer-Specker algebras. The same way we can think of de Vries algebras as the algebras $\RO(X)$, where $X$ is compact Hausdorff, and of KT-algebras as the algebras $N(X)$, we can think of Baer-Specker algebras as the algebras $FN(X)$ of finitely-valued normal functions. These algebras have a long history, for which we refer to \cite{BMO20e} and the references therein. In our context they arise as follows. 

\begin{definition} \cite[Def.~5.1]{BMO13a}
We call a commutative unital $\mathbb{R}$-algebra $A$ a \emph{Specker algebra} if it is generated as an $\mathbb{R}$-algebra by its idempotents.
\end{definition}

 For $A\in\bal$ let $\spec(A)$ be the $\mathbb{R}$-subalgebra of $A$ generated by $\Id(A)$. We call $\spec(A)$ the \emph{Specker subalgebra} of $A$.

\begin{theorem} \cite[Prop.~5.5]{BMO13a}
Each Specker algebra is a $\bal$-algebra. Thus, $A \in \bal$ is a Specker algebra iff $A = \spec(A)$.
\end{theorem}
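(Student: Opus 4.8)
The plan is to prove the two assertions of the theorem in sequence. The first assertion, that every Specker algebra is a $\bal$-algebra, requires producing an $\ell$-algebra structure and verifying it is bounded and archimedean. The second assertion, the equivalence $A \in \bal$ is Specker iff $A = \spec(A)$, will follow almost formally once the first is in hand.

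For the first assertion I would begin with the observation that the idempotents of a commutative ring form a Boolean algebra under $e \wedge f = ef$, $e \vee f = e + f - ef$, and $e^* = 1 - e$. Writing $B = \Id(A)$, a Specker algebra $A$ is by definition the image of the $\mathbb{R}$-algebra map from the group algebra (or rather the $\mathbb{R}$-algebra freely generated by $B$) onto $A$. The key structural fact I would invoke is that a Specker algebra is, up to isomorphism, determined by its Boolean algebra of idempotents: concretely, every element of $A$ can be written as a finite $\mathbb{R}$-linear combination $\sum_{i=1}^n r_i e_i$ of idempotents, and by passing to a common refinement one can always arrange the $e_i$ to be pairwise orthogonal (that is, $e_i e_j = 0$ for $i \ne j$) with $\sum e_i = 1$. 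This normal form is the main technical tool, and establishing it cleanly is where I expect the real work to lie. Once each $a \in A$ is written as $a = \sum_i r_i e_i$ over an orthogonal partition of unity, the natural order $a \ge 0$ iff all $r_i \ge 0$ gives a lattice order, and one checks that lattice operations computed via common refinements of partitions are well defined and compatible with the ring and $\mathbb{R}$-module structure, so that the three $\ell$-algebra axioms of Definition~\ref{def: bal} hold.

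With the $\ell$-structure in place, boundedness and the archimedean property are straightforward from the orthogonal normal form. For boundedness, if $a = \sum_i r_i e_i$ with the $e_i$ orthogonal and summing to $1$, then choosing an integer $n \ge \max_i r_i$ gives $a \le n \cdot 1$, so $1$ is a strong order unit. For the archimedean property, suppose $n \cdot a \le b$ for all $n \ge 1$; writing both $a$ and $b$ over a common orthogonal partition $\{e_i\}$ with coefficients $r_i$ and $s_i$ respectively, the inequality forces $n r_i \le s_i$ for every $i$ with $e_i \ne 0$ and every $n$, whence $r_i \le 0$ for all such $i$, giving $a \le 0$. Thus $A \in \bal$.

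The second assertion is then immediate: if $A \in \bal$ is a Specker algebra then by definition $A$ is generated as an $\mathbb{R}$-algebra by $\Id(A)$, so $A = \spec(A)$ by the definition of the Specker subalgebra; conversely, if $A = \spec(A)$ then $A$ is generated by its idempotents and hence is a Specker algebra, which by the first part lies in $\bal$. The main obstacle throughout is the orthogonalization step---showing that any finite set of idempotents can be refined to an orthogonal partition of unity and that the induced lattice operations are independent of the chosen refinement---since all the order-theoretic verifications rest on having this canonical form available.
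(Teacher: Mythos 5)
Your proposal is correct and takes essentially the same route as the cited proof in \cite[Prop.~5.5]{BMO13a} (the paper itself gives no proof, only the citation): full orthogonal decompositions $a = \sum_i r_i e_i$ over a partition of unity by atoms of the finite boolean subalgebra generated by the idempotents involved --- the same normal form this paper recalls from \cite[Lem.~2.1]{BMMO15a} --- with the coefficientwise order, and coefficientwise verification of boundedness and the archimedean property, followed by the formal equivalence with $A = \spec(A)$. The orthogonalization and well-definedness step you flag as the main obstacle is indeed the crux but goes through smoothly: since $re = 0$ with $0 \ne r \in \mathbb{R}$ forces $e = 0$, coefficients on nonzero atoms of a common refinement are uniquely determined, so the order does not depend on the chosen decomposition.
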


\begin{definition}
A Specker algebra is a \emph{Baer-Specker} algebra if it is a Baer ring.
\end{definition}

\begin{remark} \label{rem: Baer vs idempotents}
By \cite[Cor.~4.4]{BMMO15a}, a Specker algebra $A$ is Baer-Specker iff $\Id(A)$ is a complete boolean algebra. We will use this fact frequently. 
\end{remark}

It is proved in \cite[Thm.~6.2]{BMO13a} that $A\in\bal$ is a Specker algebra iff $A$ is isomorphic to the $\ell$-algebra $FC(X)$ of finitely-valued continuous functions on a Stone space $X$, and that the category of Specker algebras is dually equivalent to the category of Stone spaces. Moreover, $A$ is a Baer-Specker algebra iff $A$ is isomorphic to $FC(X)$ where $X$ is in addition extremally disconnected (ED), and the category of Baer-Specker algebras is dually equivalent to the category of compact Hausdorff ED-spaces.

Proximities on Specker algebras and Baer-Specker algebras were introduced in \cite{BMMO15b}, where it was shown that the category of proximity Baer-Specker algebras is equivalent to $\dev$ and dually equivalent to $\KHaus$. 

\begin{definition}\label{def:PBSp} 
We call a proximity $\bal$-algebra  $(A,\lhd)$ a {\em proximity Specker algebra} if $A$ is a Specker algebra. If $A$ is a Baer-Specker algebra, then we call $(A,\lhd)$ a {\em proximity Baer-Specker algebra}.
\end{definition}

\begin{remark}
In \cite[Def.~4.2]{BMMO15b} the base ring is an arbitrary totally ordered integral domain $R$ rather than $\mathbb{R}$. Because of this, Axiom (P7) takes on the following more complicated form: 
\[
a \lhd b \textrm{ implies } ra \lhd rb \textrm{ for each } 0 < r \in R, \textrm{ and } ra \lhd rb \textrm{ for some } 0 < r \in R \textrm{ implies } a \lhd b. 
\]
If $R$ is a totally ordered field, this axiom simplifies to (P7) of Definition~\ref{def: proximity bal}.
\end{remark}

To distinguish between proximity $\bal$-algebras and proximity Specker algebras, from now on we will write $S$ for a Specker algebra and $\ll$ for a proximity on $S$.

\begin{definition} \label{def: weak proximity morphism}
Let $(S,\ll)$ and $(S',\ll')$ be two proximity Baer-Specker algebras. A map $\alpha:S\to S'$ is a {\em weak proximity morphism} if $\alpha$ satisfies axioms (PM1)--(PM5) of Definition~\ref{def: proximity morphism}, as well as the following weakening of axioms (PM6) and (PM7), where $r\in\mathbb R$: 
\begin{enumerate}
\item[(PM6$^\prime$)] $\alpha(a \vee r) = \alpha(a) \vee r$.
\item[(PM7$^\prime$)] $\alpha(a + r) = \alpha(a) + r$.
\end{enumerate}
\end{definition}

\begin{remark}\label{rem: wpm=pm}
\hfill
\begin{enumerate}
\item The weakening of (PM8) is (PM5), hence it is redundant.
\item Definition~\ref{def: weak proximity morphism} originates in \cite[Def.~6.4]{BMMO15b}, where morphisms between proximity Baer-Specker algebras were called proximity morphisms. Here we call them weak proximity morphisms because this notion is weaker than that of a proximity morphism given in Definition~\ref{def: proximity morphism}. However, as we will see in Theorem~\ref{prop:weak proximity morphism}, the two notions of morphism between proximity Baer-Specker algebras are equivalent. This requires several technical lemmas, which are proved in the Appendix.
\item If $\alpha$ is a proximity morphism between proximity Dedekind algebras, then it is obvious that in Axiom (PM4) the least upper bound of $\{\alpha(a):a\lhd b\}$ exists. That this least upper bound also exists if $\alpha$ is a weak proximity morphism between proximity Baer-Specker algebras follows from \cite[Lem.~6.1]{BMMO15b}. 
\end{enumerate}
\end{remark}

\begin{theorem} \cite[Thm.~6.7]{BCMO22a}
Proximity Baer-Specker algebras and weak proximity morphisms between them form a category $\PBSp$, where the composition $\alpha_2\star\alpha_1$ of two proximity morphisms $\alpha_1:S_1\to S_2$ and $\alpha_2:S_2\to S_3$ is 
given by
\[
(\alpha_2\star\alpha_1)(s)=\bigvee\{\alpha_2\alpha_1(t) : t\ll_1 s\}.
\]
\end{theorem}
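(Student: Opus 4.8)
The plan is to verify the three defining properties of a category for $\PBSp$: that the prescribed composition $\star$ sends a pair of weak proximity morphisms to a weak proximity morphism, that each object carries an identity, and that $\star$ is associative. At every stage I would follow the template used to show that $\dev$ is a category, since the composition formula for $\PBSp$ is the verbatim analogue of the de Vries composition, with the well-inside relation $\prec$ replaced by the Specker proximity $\ll$ and the de Vries interpolation (DV6) replaced by (P9). The two recurring technical inputs are the interpolation axiom (P9), used to split and telescope the indexing conditions $t \ll s$, and \cite[Lem.~6.1]{BMMO15b}, which guarantees that the joins occurring in (PM4) and in the definition of $\star$ actually exist \emph{inside} the Baer-Specker algebra $S$; since $S$ need not be Dedekind complete, this is a genuine point rather than a formality.

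I would dispose of the identity first. The identity morphism on $(S,\ll)$ is the set-theoretic identity map $\mathrm{id}_S$. All axioms of Definition~\ref{def: weak proximity morphism} are immediate for $\mathrm{id}_S$ except (PM4), which becomes the approximation property $s = \bigvee\{t \in S : t \ll s\}$. The inequality $\ge$ holds because $t \ll s$ gives $t \le s$ by (P2); for $\le$ one decomposes a Specker element into an orthogonal $\mathbb{R}$-combination of idempotents and approximates each idempotent from below in $\ll$, using that the idempotents of $S$ form a de Vries algebra under the restriction of $\ll$ together with completeness of $\Id(S)$ (Remark~\ref{rem: Baer vs idempotents}). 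Once $\mathrm{id}_S$ is known to be a weak proximity morphism, the identity laws follow from the composition formula and (PM4): $(\alpha \star \mathrm{id})(s) = \bigvee\{\alpha(t) : t \ll s\} = \alpha(s)$ and, symmetrically, $(\mathrm{id} \star \alpha)(s) = \bigvee\{\alpha(t) : t \ll s\} = \alpha(s)$.

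Next I would show that $\alpha_2 \star \alpha_1$ is again a weak proximity morphism. Axioms (PM1) and (PM5) follow directly from the corresponding properties of $\alpha_1,\alpha_2$ and the behaviour of joins under scaling. For (PM3), given $a \ll b$ one inserts an interpolant $a \ll c \ll b$ by (P9) and applies (PM3) for $\alpha_1$ and then $\alpha_2$. Axiom (PM4) for the composite reduces, via (P9), to the observation that $\{\alpha_2\alpha_1(t) : t \ll s\}$ and $\{(\alpha_2\star\alpha_1)(b) : b \ll s\}$ are cofinal in one another. The delicate axioms are (PM2), (PM6$'$) and (PM7$'$), where one must commute the defining join past a meet, a join with a scalar, or a translation by a scalar; here I would again use (P9) to replace a single index $t \ll a \wedge b$ (respectively $t \ll a + r$) by a pair of interpolants sitting below the separate arguments, recombine them using (PM2), (PM6$'$), (PM7$'$) for $\alpha_1$ and $\alpha_2$, and invoke \cite[Lem.~6.1]{BMMO15b} to keep all the intermediate joins inside $S$.

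Finally, associativity $(\alpha_3 \star \alpha_2) \star \alpha_1 = \alpha_3 \star (\alpha_2 \star \alpha_1)$ I would prove pointwise by expanding both iterated composites and matching the resulting nested joins. The nontrivial point is that the inner join in $(\alpha_3\star\alpha_2)(\alpha_1(b))$ ranges over elements $d \ll_2 \alpha_1(b)$ of $S_2$ rather than over images $\alpha_1(c)$; bridging this gap requires (P9) together with (PM3) and (PM4) for the $\alpha_i$ to replace each such $d$, up to the join, by some $\alpha_1(c)$ with $c \ll_1 b$, exactly as in the associativity proof for $\dev$. I expect this associativity check, together with the algebraic axioms (PM2), (PM6$'$), (PM7$'$) for the composite, to be the main obstacle: both require controlling the interaction between the composition join and either the order structure or the ring and lattice operations, and neither reduces to a purely order-theoretic manipulation the way (PM1), (PM3), (PM4), and (PM5) do. This difficulty is orthogonal to the harder equivalence of weak and genuine proximity morphisms recorded in Theorem~\ref{prop:weak proximity morphism}, whose proof is deferred to the appendix.
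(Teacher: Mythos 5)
Your outline is essentially sound, but you should know that the paper never proves this statement at all: it imports it verbatim as \cite[Thm.~6.7]{BCMO22a}, and the machinery on display in the Appendix shows how the cited proof goes. By \cite[Cor.~6.6]{BCMO22a} (used in the proof of Lemma~\ref{lem: inequality}), a weak proximity morphism $\alpha : (S,\ll) \to (S',\ll')$ corresponds, under the $\bal$-isomorphisms $(-)^\flat : S \to \mathbb{R}[\Id(S)]^\flat$, exactly to $\sigma^\flat$ for the de Vries morphism $\sigma = \alpha|_{\Id(S)}$; the category axioms for $\PBSp$ are then transported from $\dev$ along this correspondence, rather than verified axiom by axiom. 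Your direct verification is therefore a genuinely different route, and it can be made to work, but be aware of where it stops being the ``verbatim analogue'' of the $\dev$ proof. Two de Vries conveniences must be replaced: completeness of the Boolean algebra becomes the join-existence issue you correctly delegate to \cite[Lem.~6.1]{BMMO15b}, and the frame law $a \wedge \bigvee S = \bigvee\{a \wedge s\}$ becomes the $\ell$-group fact that \emph{existing} joins distribute over finite meets --- with that fact, (PM2) for the composite needs no interpolation at all, since $t \ll a \wedge b$ iff $t \ll a$ and $t \ll b$ by (P3) and (P4). The axioms with no de Vries analogue behave unevenly: (PM7$'$) is exact, because $t \ll a + r$ iff $t - r \ll a$ (scalars are reflexive by (P1), (P5), (P7)), whereas (PM6$'$) genuinely requires the cofinality claim that every $t \ll a \vee r$ lies below some $t' \vee r$ with $t' \ll a$; this is true but needs the componentwise characterization of $\ll$ via compatible decreasing decompositions (Claim~\ref{claim: lhd and ll}, Lemma~\ref{rem: some properties}), and your sketch only gestures at it. Similarly, for (PM4) of the identity you should use \emph{decreasing} rather than orthogonal decompositions, since negative coefficients break the monotone approximation of idempotents from below. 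What each approach buys: yours is self-contained and avoids developing the boolean-power functor $\Sp$, at the cost of several pages of $\ell$-algebra bookkeeping; the transport proof of \cite{BCMO22a} is short once $(-)^\flat$ is in place and makes the agreement of $\star$ with de Vries composition automatic, which is exactly what the equivalence $\PBSp \simeq \dev$ (Theorem~\ref{thm: dev = PBSp}) later exploits.
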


That $\PBSp$ is equivalent to $\dev$ was first observed in \cite[Cor.~8.7]{BMMO15b}, but the proof used duality theory for these categories. A purely algebraic and choice-free proof of this result was given in \cite[Thm.~6.9]{BCMO22a}:

\begin{theorem} \label{thm: dev = PBSp}
The categories $\PBSp$ and $\dev$ are equivalent.
\end{theorem}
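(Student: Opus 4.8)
The plan is to exhibit an explicit pair of quasi-inverse functors $\Id\colon\PBSp\to\dev$ and $\mathcal{F}\colon\dev\to\PBSp$, exploiting the tight analogy (already visible in Theorem~\ref{thm: prox}) between the idempotents of a Baer-Specker algebra and a complete boolean algebra. On objects, $\Id$ sends $(S,\ll)$ to $(\Id(S),\prec)$, where $\prec$ is the restriction of $\ll$ to $\Id(S)$. Here $\Id(S)$ is a complete boolean algebra by Remark~\ref{rem: Baer vs idempotents}, and I would obtain the de Vries axioms (DV1)--(DV7) by specializing the proximity axioms (P1)--(P10) to idempotents, noting that for $e\in\Id(S)$ the boolean complement is $e^*=1-e$; thus (DV5) comes from (P5) and (DV7) from (P10), while the interpolation axiom (DV6) requires the extra observation that interpolation can be achieved within $\Id(S)$, in the spirit of the argument for $\pda$ in Theorem~\ref{thm: prox}. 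This is the routine direction.

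The functor $\mathcal{F}$ is where the real work lies. Given $(B,\prec)\in\dev$, I would form the Specker algebra $S$ over $B$, realized choice-freely as the algebra of finite $\mathbb{R}$-combinations $\sum_i r_i e_i$ of pairwise disjoint elements of $B$; then $\Id(S)\cong B$, and $S$ is Baer by Remark~\ref{rem: Baer vs idempotents} since $B$ is complete. The task is to lift $\prec$ to a proximity $\ll$ on $S$. Using the canonical representation of each element of $S$ as a finite $\mathbb{R}$-combination of pairwise orthogonal idempotents, I would define $\ll$ through the ``level'' idempotents of elements, declaring $s\ll t$ precisely when the level idempotents of $s$ sit $\prec$-well-inside those of $t$ after an arbitrarily small shift of the threshold. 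I would then check (P1)--(P10) for $\ll$: the lattice and linearity axioms (P2)--(P8) reduce to finite manipulations of canonical forms together with (DV2)--(DV4), the interpolation (P9) is supplied by (DV6), and (P10) by (DV7).

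On morphisms, $\Id$ sends a weak proximity morphism $\alpha\colon(S,\ll)\to(S',\ll')$ to its restriction $\alpha|_{\Id(S)}$; that this maps $\Id(S)$ into $\Id(S')$ follows from (PM2) and $e=1\wedge 2e$ as in the proof of Theorem~\ref{thm: prox}(2), and the de Vries morphism axioms (M1)--(M4) follow from (PM1)--(PM4). Conversely, $\mathcal{F}$ sends a de Vries morphism $\sigma\colon B\to B'$ to the map on Specker algebras obtained by extending $\sigma$ $\mathbb{R}$-linearly on canonical forms; here I would verify that this extension is well defined and satisfies (PM1)--(PM5) together with the weakened axioms (PM6$'$) and (PM7$'$). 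Because the star-compositions in the two categories are formally parallel, preservation of composition reduces to matching $\bigvee\{\sigma_2\sigma_1(b):b\prec a\}$ with the corresponding join over $\ll$, an interpolation argument using (M4) and the characterization of $\ll$ by level idempotents.

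Finally, I would produce the natural isomorphisms $\Id\circ\mathcal{F}\cong\mathrm{id}_{\dev}$ and $\mathcal{F}\circ\Id\cong\mathrm{id}_{\PBSp}$. The first is essentially the identity on the underlying boolean algebra, since taking idempotents of the lifted Specker algebra recovers $B$ and restricting the lifted proximity recovers $\prec$. For the second, one uses that $S$ is already generated by $\Id(S)$ (as $S$ is Specker), so the only substantive point is that $\ll$ coincides with the proximity lifted from its own restriction to $\Id(S)$. I expect this last equality to be the main obstacle: it amounts to showing that a proximity on a Baer-Specker algebra is \emph{completely determined} by its restriction to idempotents, which should follow by decomposing arbitrary elements into canonical form and propagating the idempotent-level proximity through the additive, scalar, and multiplicative compatibility axioms (P6)--(P8).
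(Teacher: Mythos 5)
Your proposal is correct and follows essentially the same route as the paper, which obtains this equivalence (citing the choice-free proof in \cite{BCMO22a}) via the restriction functor $\Id$ to the complete boolean algebra of idempotents and the de Vries power functor $\Sp$: your Specker algebra of finite combinations of disjoint elements of $B$ with the proximity defined through level idempotents is exactly $(\mathbb{R}[B]^\flat,\prec^\flat)$ of Definition~\ref{star definition} in different notation, and your closing observation that $\ll$ is determined by its restriction to $\Id(S)$ is precisely Claim~\ref{claim: lhd and ll}. The verifications you defer (well-definedness, the proximity axioms, interpolation within $\Id(S)$, and the matching of star-compositions) are exactly the technical lemmas the paper outsources to \cite{BCMO22a} and \cite{BMMO15b}, so no step of your plan would fail.
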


We recall that this equivalence is obtained as follows. The covariant functor $\Id : \PBSp \to \dev$ sends a proximity Baer-Specker algebra ${(S, \ll)}$ to the de Vries algebra $(\Id(S), \prec)$, where $\prec$ is the restriction of $\ll$ to $\Id(S)$, and a proximity morphism $\alpha : (S, \ll) \to (S', \ll')$ to its restriction $\alpha|_{\Id(S)}$. 
The covariant functor from $\dev$ to $\PBSp$ is defined by generalizing the notion of a boolean power of $\mathbb R$ to that of a de Vries power. 

\begin{definition} \label{star definition} \cite[Def.~4.7]{BCMO22a}
For a boolean algebra $B$, define $\mathbb R[B]^\flat$ to be the set of all decreasing functions $a:\mathbb R\to B$ for which there exist $1=e_0 > e_1 > \cdots > e_n > 0$ in  $B$ and $r_0 < r_1 < \cdots < r_n$ in $\mathbb R$ such that
\[
a(r) =
\left\{\begin{array}{ll}
1 & \textrm{if } r \le r_0, \\
e_i & \textrm{if } r_{i-1} < r \le r_i, \\
0 & \textrm{if } r_n < r.\\
\end{array}\right.
\]
\end{definition}

By \cite[Thm.~4.9]{BCMO22a}, $\mathbb{R}[B]^\flat$ is a Specker algebra with pointwise order and algebra operations given by 
\begin{itemize}
\item $(a+b)(r) = \displaystyle{\bigvee} \{a(r_1) \wedge b(r_2) : r_1+r_2 \ge r\}$.
\item If $s>0$, then $(sa)(r) = \displaystyle{\bigvee} \{a(t) : st\ge r\}$.
\item If $a,b \geq 0$, then $(ab)(r) =\displaystyle{\bigvee} \{a(r_1) \wedge b(r_2) : r_1, r_2 \ge 0, r_1r_2 \ge r\}$.
\end{itemize}
Moreover, $B$ is isomorphic to $\Id(\mathbb{R}[B]^\flat)$, and the isomorphism sends each $e\in B$ to $e^\flat\in\mathbb R[B]^\flat$ defined by
\[
e^\flat (r) = \left\{ 
\begin{array}{ll}
1 & \textrm{if }r \le 0 \\ 
e & \textrm{if }0 < r \le 1 \\ 
0 & \textrm{if }1 < r. 
\end{array} \right.
\]
Furthermore, each de Vries proximity $\prec$ on $B$ lifts to a proximity $\prec^\flat$ on $\mathbb R[B]^\flat$ given by
\[
a\prec^\flat b \mbox{ iff } a(r)\prec b(r) \mbox{ for all } r\in\mathbb R.
\]
Then for $e, f\in B$ we have
\begin{equation}
e \prec f \textrm{ iff } e^\flat \prec^\flat f^\flat. \tag{6.1} \label{eqn}
\end{equation}
Thus, if $(B,\prec)$ is a de Vries algebra, then $(\mathbb R[B]^\flat,\prec^\flat)$ is a proximity Baer-Specker algebra and $(B,\prec)$ is isomorphic to $(\Id(\mathbb R[B]^\flat),\prec^\flat)$.

In addition, each de Vries morphism $\sigma : (B, \prec) \to (B', \prec')$ extends to $\sigma^\flat : \mathbb{R}[B]^\flat \to \mathbb{R}[B']^\flat$ given by $\sigma^\flat(a) = \sigma \circ a$. By \cite[Thm.~6.5]{BCMO22a}, $\sigma^\flat$ is a weak proximity morphism. The correspondence $B \mapsto B^\flat$ and $\sigma \mapsto \sigma^\flat$ defines a covariant functor $\Sp : \dev \to \PBSp$.
We thus have that $\Id:\PBSp\to\dev$ and $\Sp:\dev\to\PBSp$ are well-defined covariant functors that establish an equivalence of $\PBSp$ and $\dev$. Combining this with de Vries duality and the dual equivalence between $\PBSp$ and $\KHaus$ (see \cite[Thm.~8.6]{BMMO15b}), we obtain the following commutative diagram, where the horizontal arrow is an equivalence, while the slanted arrows are dual equivalences:
\[
\begin{tikzcd}
\PBSp \arrow[rr, leftrightarrow] \arrow[dr, leftrightarrow] && \dev \arrow[dl, leftrightarrow] \\
& \KHaus   &
\end{tikzcd}
\]

\begin{remark} \label{rem: FN = Sp Ro}
Let $X \in \KHaus$. By de Vries duality, $(\RO(X), \prec) \in \dev$. Also, by \cite[Thm.~4.10]{BMMO15b}, $(FN(X), \ll) \in \PBSp$, where we recall from \cite[Def.~3.3]{BMMO15b} that 
\[
f \ll g \textrm{ if }\cl(f^{-1}[r,\infty)) \subseteq g^{-1}[r, \infty) \textrm{ for each } r \in \mathbb{R}.
\]
By \cite[Lem.~4.8]{BMMO15b}, sending $U$ to its characteristic function $\chi_U$ is a boolean isomorphism from $\RO(X)$ to $\Id(FN(X))$. It easily follows from the definitions of $\prec$ and $\ll$ that $U \prec V$ iff $\chi_U \ll \chi_V$. Thus, ${(FN(X), \ll)} \cong \Sp(\RO(X), \prec)$ by Theorem~\ref{thm: dev = PBSp}. In Remark~\ref{rem: S is a functor} we will see that $\ll$ is the restriction to $FN(X)$ of the KT-proximity $\lhd$ on $N(X)$ defined in Section~\ref{sec:ann}.
\end{remark}

\section{Proximity Baer-Specker algebras and Kat{\v{e}}tov-Tong algebras}\label{sec:BS and KT}

In this section we prove that the category $\PBSp$ of proximity Baer-Specker algebras is equivalent to the category $\cpda$ of Kat{\v{e}}tov-Tong algebras, thus completing a series of equivalences and dual equivalences discussed in this paper. 

We can compose the functors $\Id:\pda\to\dev$ and $\Sp : \dev\to\PBSp$ to obtain a covariant functor from $\pda$ to $\PBSp$.

\begin{proposition}
There is a covariant functor $\Sp \circ \Id : \pda \to \PBSp$.
\end{proposition}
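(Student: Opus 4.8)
The plan is simply to observe that $\Sp \circ \Id$ is the composition of two covariant functors whose functoriality has already been established, and then to record the explicit action of the composite, since this description will be needed in the arguments that follow.

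First I would recall that $\Id : \pda \to \dev$ is a covariant functor by Theorem~\ref{thm: prox}(2): on objects it sends $(D, \lhd)$ to the de Vries algebra $(\Id(D), \prec)$, where $\prec$ is the restriction of $\lhd$ to $\Id(D)$, and on morphisms it sends a proximity morphism $\alpha$ to its restriction $\alpha|_{\Id(D)}$. Next I would recall from Section~\ref{sec: proximity Baer-Specker} that $\Sp : \dev \to \PBSp$ is the covariant functor sending a de Vries algebra $(B, \prec)$ to the proximity Baer-Specker algebra $(\mathbb R[B]^\flat, \prec^\flat)$ and a de Vries morphism $\sigma$ to $\sigma^\flat$, where $\sigma^\flat(a) = \sigma \circ a$. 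Since the codomain of $\Id$ coincides with the domain of $\Sp$, namely $\dev$, the composite $\Sp \circ \Id : \pda \to \PBSp$ is defined, and because the composition of covariant functors is again a covariant functor, it is itself a covariant functor.

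To make the functor concrete, I would spell out that $\Sp \circ \Id$ sends each $(D, \lhd) \in \pda$ to $(\mathbb R[\Id(D)]^\flat, \prec^\flat)$ and each proximity morphism $\alpha : (D, \lhd) \to (D', \lhd')$ to $(\alpha|_{\Id(D)})^\flat$, that is, to the map $a \mapsto (\alpha|_{\Id(D)}) \circ a$. There is no genuine obstacle in this proposition: all the substantive work resides in the two functoriality results cited above, and the only thing left to check is the formal compatibility of domains and codomains, which holds by construction.
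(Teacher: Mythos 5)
Your proof is correct and matches the paper exactly: the paper offers no separate argument for this proposition beyond the remark that $\Sp \circ \Id$ is the composite of the covariant functors $\Id : \pda \to \dev$ (Theorem~\ref{thm: prox}(2)) and $\Sp : \dev \to \PBSp$ (Section~\ref{sec: proximity Baer-Specker}), which is precisely your argument. Your explicit description of the composite's action on objects and morphisms is a harmless (and accurate) addition.
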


\begin{remark}
As we will see in Remark~\ref{rem: S is a functor}, the composition $\Sp\circ\Id$ is naturally isomorphic to the functor that associates to each proximity Dedekind algebra $(D,\lhd)$ the pair $(\spec(D),{\lhd}|_{\spec(D)})$, where we recall that $\spec(D)$ is the Specker subalgebra of $D$. 
\end{remark}

To define a covariant functor $\PBSp\to\pda$ requires some preparation. Let $S$ be a Specker algebra and $B=\Id(S)$. We recall (see \cite[Lem.~2.1]{BMMO15a}) that each $s\in S$ has an \emph{orthogonal decomposition} $s = \sum_{i=0}^n r_i e_i$ with $r_i \in \mathbb{R}$ (not necessarily distinct) and $e_i \in B$ pairwise orthogonal (that is, $e_i \wedge e_j = 0$ for each $i \ne j$). If, in addition, $e_0 \vee \cdots \vee e_n = 1$, we call this a \emph{full orthogonal decomposition}.  

\begin{lemma} \label{lem: S is Sp(D)}
Let $S$ be a Baer-Specker algebra and $D$ its Dedekind completion. Then $S = \spec(D)$.
\end{lemma}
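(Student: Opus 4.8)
The plan is to prove the lemma by first establishing that $\Id(S) = \Id(D)$ and then deducing $S = \spec(D)$ from this. Identifying $S$ with its image $\delta_S[S]$, it is an $\ell$-subalgebra of $D$, so $\Id(S) \subseteq \Id(D)$ is immediate; the content is the reverse inclusion. Once equality of idempotents is known, the conclusion is formal: $\spec(D)$ is the $\mathbb R$-subalgebra of $D$ generated by $\Id(D) = \Id(S)$, this lies inside $S$ because $S$ is a subalgebra of $D$ containing $\Id(S)$, and it contains $\spec(S) = S$ since $S$ is a Specker algebra; hence $\spec(D) = S$. Throughout I would use that $S$ is Baer-Specker, so $\Id(S)$ is a \emph{complete} boolean algebra (Remark~\ref{rem: Baer vs idempotents}), that $D$ is Dedekind hence Baer (Lemma~\ref{lem: Dedekind is Baer}), and that $S$ is join-dense and essential in $D$ (Remark~\ref{ex: prec_A}(2)).

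The technical engine is a \emph{support} observation for nonnegative elements of $S$. Given $a \in S$ with $0 \le a \le 1$, take a full orthogonal decomposition $a = \sum_{i=0}^n r_i e_i$ with $e_i \in \Id(S)$ pairwise orthogonal (see \cite[Lem.~2.1]{BMMO15a}); comparing components shows $0 \le r_i \le 1$ whenever $e_i \ne 0$, and I set $\hat a = \bigvee\{e_i : r_i > 0\} \in \Id(S)$, a \emph{finite} join, hence computed identically in $S$ and in $D$. Then $a \le \hat a$. Moreover, for any $d \in \Id(D)$ with $a \le d$, multiplying $r_i e_i = a e_i \le d e_i$ by $1 - d \ge 0$ and using $d(1-d)=0$ together with $r_i > 0$ forces $e_i(1-d)=0$, i.e.\ $e_i \le d$; hence $\hat a \le d$. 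This lets me approximate elements of $\Id(D)$ from below by idempotents of $S$ without ever leaving $S$.

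Now fix $e \in \Id(D)$, so $0 \le e \le 1$, and put $T = \{g \in \Id(S) : g \le e\}$ and $\tilde e = \bigvee T$ computed in the complete boolean algebra $\Id(S)$, so that $\tilde e \in \Id(S) \subseteq D$. For $e \le \tilde e$: by join-density $e = \bigvee\{a \in S : 0 \le a \le e\}$ in $D$, and for each such $a$ the support satisfies $\hat a \in T$ and $a \le \hat a \le \tilde e$, since $\tilde e$ viewed in $D$ is an upper bound of $T$; hence $e \le \tilde e$. For the reverse, suppose $\tilde e - e > 0$. By essentiality there is $b \in S$ with $0 < b \le \tilde e - e$, and its support satisfies $0 < \hat b \le \tilde e - e$ by the observation above, so $\hat b \wedge e = 0$ and therefore $\hat b \wedge g = 0$ for every $g \in T$. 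Infinite distributivity in the complete boolean algebra $\Id(S)$ then gives $\hat b \wedge \tilde e = \bigvee\{\hat b \wedge g : g \in T\} = 0$, contradicting $0 < \hat b \le \tilde e$. Hence $\tilde e = e$, so $e \in \Id(S)$ and $\Id(D) = \Id(S)$, which finishes the proof.

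The main obstacle to guard against is circularity when comparing joins taken in $\Id(S)$, in $S$, and in $D$: a priori the complete boolean algebra $\Id(S)$ need not be a complete (join-preserving) subalgebra of $\Id(D)$, and proving that it is would be essentially equivalent to the lemma itself. The key to sidestepping this is to use only finite joins (which $S$ and $D$ share, since $S$ is an $\ell$-subalgebra) together with one-sided comparisons — a join formed in $\Id(S)$ is automatically an upper bound in $D$, which is all that is needed for $e \le \tilde e$ — and to obtain the opposite inequality $\tilde e \le e$ entirely inside $\Id(S)$ through the disjointness and distributivity argument, rather than by trying to identify a $D$-join with an $\Id(S)$-join.
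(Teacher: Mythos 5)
Your proof is correct, and while it shares the paper's skeleton (reduce to showing $\Id(D)\subseteq\Id(S)$, then approximate $e\in\Id(D)$ from below by idempotents of $S$ extracted from full orthogonal decompositions via join-density), the closing mechanism is genuinely different. The paper first rewrites each $0\le a\le e$ in $S$ as a finite join of terms $rf$ and uses the implication $rf\le e\Rightarrow f\le e$ to get $e=\bigvee\{f\in\Id(S):f\le e\}$ computed in $D$; it then finishes by transferring that join across three structures: it exists in the complete boolean algebra $\Id(S)$, agrees with the join in $S$ by Remark~\ref{rem: complete idempotents}, and agrees with the join in $D$ because Dedekind completions preserve existing joins --- precisely the identification you flag as delicate. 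You sidestep all of this: you form $\tilde e$ only inside $\Id(S)$, obtain $e\le\tilde e$ from the upper-bound property alone via your support operator $\hat a$ (the least idempotent of $D$ above $a$, produced from the orthogonal decomposition by the multiplication-by-$(1-d)$ trick, which plays the role of the paper's $rf\le e\Rightarrow f\le e$ lemma), and rule out $e<\tilde e$ using essentiality of $S$ in $D$ (Remark~\ref{ex: prec_A}(2)) together with the join-infinite distributive law in the complete boolean algebra $\Id(S)$. The paper's route is shorter given its cited remarks; yours is more self-contained and makes explicit that no join-transfer between $\Id(S)$, $S$, and $D$ is ever invoked, at the cost of the extra contradiction argument. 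One microscopic point to smooth: applying your support observation to $b\le\tilde e-e$ requires $\tilde e-e\in\Id(D)$, which is true but unremarked (from $e\le\tilde e$ one gets $e\tilde e=e$, so $\tilde e-e=\tilde e(1-e)$ is idempotent); alternatively, apply the observation twice, with $d=\tilde e$ and $d=1-e$, which yields $\hat b\le\tilde e$ and $\hat b\wedge e=0$ directly, and these are all your distributivity step actually needs.
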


\begin{proof}
It is sufficient to show that $\Id(S)=\Id(D)$. Since $\Id(S)\subseteq\Id(D)$, it then suffices to show the other inclusion. 
Let $e \in \Id(D)$. Since $S$ is join-dense in $D$, we may write $e = \bigvee \{ a \in S : a \le e \}$. Moreover, since $0 \le e$, we have $e = \bigvee \{ a \in S : 0 \le a \le e \}$.  Let $a \in S$ with $0 \le a$. Then $a = \sum_i r_i e_i$ for some $r_i \in \mathbb{R}$ and pairwise orthogonal nonzero idempotents $e_i \in \Id(S)$. Therefore, $ae_i = r_i e_i$ because $e_ie_j = e_i \wedge e_j = 0$ when $i \ne j$. If $r_i < 0$, then $r_i e_i \le 0$, which implies that $r_i e_i = 0$ since $ae_i \ge 0$. This forces $r_i = 0$, a contradiction. Thus, each $r_i \ge 0$. Then $a = \bigvee_i r_i e_i$ by \cite[Eqn.~XIII.3(14)]{Bir79}. Consequently, $a$ is a finite join of elements of the form $rf$ with $0 \le r \in \mathbb{R}$ and $f \in \Id(S)$. Therefore, $e = \bigvee \{ rf : 0 \le r, f \in \Id(S), rf \le e \}$. If $rf \le e$, then $r \le 1$ and $f \le e$ by \cite[Lem.~4.9(6)]{BMMO15b}. Thus, $e = \bigvee \{ f \in \Id(S) : f \le e\}$. Since $S$ is Baer, $\Id(S)$ is a complete boolean algebra by Remark~\ref{rem: Baer vs idempotents}, so this join exists in $\Id(S)$, and is equal to the join in $S$ by Remark~\ref{rem: complete idempotents}. Finally, because $D$ is the Dedekind completion of $S$, an existing join in $S$ is the same as the corresponding join in $D$, and hence $e \in \Id(S)$.
\end{proof}

\begin{proposition} \label{prop: S dense in A}
Let $A \in \bal$ be Baer. Then $\spec(A)$ is uniformly dense in $A$.
\end{proposition}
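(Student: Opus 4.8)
The plan is to approximate each $a \in A$ uniformly by a ``step function'' assembled from finitely many idempotents of $A$. Since $\spec(A)$ is an $\mathbb R$-subalgebra of $A$ containing $1$, it is in particular a real subspace containing the constants, and $A$ is bounded; so after replacing $a$ by $(a + \|a\|)/(2\|a\|)$ (the case $a = 0$ being trivial) and reversing this affine substitution at the end, I may assume $0 \le a \le 1$. Fixing $n \ge 1$, the goal is then to produce $s \in \spec(A)$ with $\|a - s\| \le 1/n$.

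First I would cut $a$ at the thresholds $k/n$. For $0 \le k \le n$ put $p_k = (a - k/n)^+$. Because $A$ is Baer, each annihilator ideal $\ann_A(p_k)$ is generated by an idempotent, so writing $\ann_A(p_k) = (1 - e_k)A$ defines $e_k \in \Id(A)$ with $e_k A = \ann_A(\ann_A(p_k))$. As $p_k$ decreases with $k$, the ideals $\ann_A(p_k)$ increase, whence $e_0 \ge e_1 \ge \cdots \ge e_n$; and the two boundary cases behave as expected: $p_0 = a$ gives $a \in e_0 A$, so $(1-e_0)a = 0$, while $p_n = (a-1)^+ = 0$ gives $e_n = 0$. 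The idempotents I actually use are the pairwise orthogonal layers $g_k := e_k - e_{k+1}$ for $0 \le k \le n-1$, which telescope to $\sum_k g_k = e_0$.

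The crux is a two-sided bound $(k/n)g_k \le g_k a \le ((k+1)/n)g_k$ on each layer. For the lower bound, $p_k (a-k/n)^- = 0$ shows $(a-k/n)^- \in \ann_A(p_k) = (1-e_k)A$, so $e_k(a-k/n)^- = 0$; hence $e_k(a - k/n) = e_k p_k - e_k(a-k/n)^- = p_k \ge 0$, giving $e_k a \ge (k/n)e_k$ and, after multiplying by $g_k \le e_k$, $g_k a \ge (k/n)g_k$. For the upper bound, $p_{k+1} \le e_{k+1}$ (since $p_{k+1} \in e_{k+1}A$ and $0 \le p_{k+1} \le 1$) together with $g_k e_{k+1} = 0$ forces $g_k p_{k+1} = 0$, which by the $f$-ring identity invoked in Lemma~\ref{lem:ann} reads $(g_k(a - (k+1)/n))^+ = 0$, i.e.\ $g_k a \le ((k+1)/n)g_k$. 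Setting $s = \sum_{k=0}^{n-1}(k/n)g_k \in \spec(A)$ and using $a = e_0 a = \sum_k g_k a$, I get $a - s = \sum_k\big(g_k a - (k/n)g_k\big)$ with each summand between $0$ and $(1/n)g_k$; since the $g_k$ are orthogonal with $\sum_k g_k = e_0 \le 1$, this sums to $0 \le a - s \le 1/n$, so $\|a-s\| \le 1/n$, as desired.

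I expect the one genuinely delicate point to be the per-threshold lower bound $e_k a \ge (k/n)e_k$. The idempotent $e_k$ is the double annihilator of $p_k$, i.e.\ a ``regularization'' of the support of the positive part of $a - k/n$, and the worry is that this regularization might pick up mass on the region where $a < k/n$; the orthogonality $p_k (a-k/n)^- = 0$ is precisely what rules this out. Everything else---the monotonicity and orthogonality of the $e_k$ and $g_k$, the upper estimate, and the final telescoping sum---should be routine $f$-ring bookkeeping.
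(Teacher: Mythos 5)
Your proof is correct and is essentially the paper's argument: both use the Baer hypothesis to extract, at each cut of $a$, an idempotent generating an annihilator ideal of $(a-\text{threshold})^{\pm}$, assemble a step function from these idempotents, and secure the sandwich inequalities with the same $f$-ring facts (orthogonality of positive and negative parts, and $s\,x^{+}=(sx)^{+}$ for $s\ge 0$). The differences are cosmetic: the paper cuts at integer thresholds, takes $e_nA=\ann_A((a-n)^-)$ sandwiched between consecutive slices $a_{n+1}\le e_n\le a_n$, sums the decreasing idempotents to get $b\le a\le b+1$ and then rescales, whereas you cut directly on a $1/n$-grid, use the double annihilator of $(a-k/n)^{+}$, and bound $a$ layerwise on the orthogonal idempotents $g_k=e_k-e_{k+1}$.
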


\begin{proof}
Let $a \in A$. We claim that it is enough to show that whenever $0 \le a \in A$, there is $b \in S:=\spec(A)$ with $b \le a \le b + 1$. Suppose this happens. We show that $S$ is uniformly dense in $A$. Let $a \in A$ be arbitrary. There is $r \in \mathbb{R}$ with $a + r \ge 0$. Given $\varepsilon > 0$ there is $n \in \mathbb{N}$ with $1/n < \varepsilon$. By assumption there is $b \in S$ with $b \le n(a + r) \le b + 1$. Therefore, $b/n - r \le a \le 
b/n + 1/n - r$. Set $c = b/n - r$. Then $c \in S$ and $c \le a \le c + 1/n$. This implies that $\|a - c\| \le 1/n < \varepsilon$. 
Thus, $S$ is unformly dense in $A$.

We now show that if $0 \le a \in A$, there is $b \in S$ with $b \le a \le b + 1$. For each $n \ge 1$ set $a_n = (a \wedge n) - (a \wedge (n-1))$. Then $a_n = [(a - (n-1)) \wedge 1] \vee 0$ by \cite[Lem.~5.4(1)]{BCMO22a}\footnote{The hypothesis of the lemma has $S$ a Specker algebra but the proof of (1) does not use that.}. There is a positive integer $N$ with $a \le N$. This implies that $a_n = 0$ if $n > N$, and so
\begin{align*}
a &= (a \wedge 1) + [(a \wedge 2) - (a \wedge 1)] + \cdots + [(a\wedge N) - (a \wedge (N-1))]\\
&= a_1 + \cdots + a_{N}.
\end{align*}
We will show that there are idempotents $e_n \in S$ satisfying $a_{n+1} \le e_n \le a_n$ for each $n$ with $1 \le n \le N$. From this,
setting $b = e_1 + \cdots + e_{N}$, we obtain $b \le a_1 + \cdots + a_{N} = a$. Also, since $a_1 \le 1$, we have $a \le 1 + e_1 + \cdots + e_{N-1} \le 1 + b$.

To produce the idempotents, since $A$ is Baer, there is $e_n \in \Id(A) = \Id(S)$ with $e_nA = \ann_A((a-n)^-)$. Since $(a-n)^+ (a-n)^- = 0$, we have $(a-n)^+ \in e_n A$, so
\[
a_{n+1} =[(a-n) \wedge 1] \vee 0 = [(a-n) \vee 0] \wedge 1 = (a-n)^+ \wedge 1
\]
by \cite[Thm.~XIII.4.4]{Bir79}.
Therefore, $a_{n+1} \in e_n A$ because $e_n A$ is an $\ell$-ideal of $A$ by Lemma~\ref{lem:ann}. Thus, $a_{n+1}e_n = a_{n+1}$. This yields $a_{n+1} = a_{n+1}e_n \le e_n$ because $a_{n+1} \le 1$. For the other inequality, since $e_n(a-n)^- = 0$, we have $e_n(a-n) = e_n(a-n)^+ - e_n(a-n)^- = e_n(a-n)^+ \ge 0$. Therefore, by \cite[Cor.~XVII.5.1]{Bir79},
\begin{align*}
e_na_n &= e_n([(a-(n-1)) \wedge 1] \vee 0) = [e_n(a-(n-1)) \wedge e_n] \vee 0 \\
&= [(e_n(a-n) + e_n) \wedge e_n] \vee 0 = e_n
\end{align*}
because $e_n(a-n) + e_n \ge e_n$ (as $e_n(a-n) \ge 0$) and $0 \le e_n$. 
Since $e_n \le 1$, we get $e_n = e_na_n \le a_n$, which gives the other inequality. We have thus produced idempotents $e_n$ with $a_{n+1} \le e_n \le a_n$ for each $n$. This completes the proof.
\end{proof}

\begin{corollary} \label{cor: S dense in D}
A Baer-Specker algebra is uniformly dense in its Dedekind completion.
\end{corollary}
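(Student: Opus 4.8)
The plan is to combine the two results just established, so the proof is short. Let $S$ be a Baer-Specker algebra and let $D$ be its Dedekind completion. First I would observe that $D$ is a Dedekind algebra, so by Lemma~\ref{lem: Dedekind is Baer} it lies in $\ubal$ and is a Baer ring. This is the only verification needed to unlock the machinery: it allows me to apply Proposition~\ref{prop: S dense in A} with $A = D$, which yields that $\spec(D)$ is uniformly dense in $D$.

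Next I would invoke Lemma~\ref{lem: S is Sp(D)}, which identifies the Specker subalgebra of the Dedekind completion with $S$ itself, that is, $S = \spec(D)$. Substituting this identity into the density statement from the previous paragraph gives that $S$ is uniformly dense in $D$, which is exactly the assertion of the corollary.

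There is no genuine obstacle here, since the substantive work has already been carried out in Lemma~\ref{lem: S is Sp(D)} and Proposition~\ref{prop: S dense in A}. The only point requiring attention is that Proposition~\ref{prop: S dense in A} is phrased for an arbitrary Baer algebra $A \in \bal$, so one must confirm that the Dedekind completion $D$ satisfies its hypotheses before applying it; this is immediate from Lemma~\ref{lem: Dedekind is Baer}, which guarantees that every Dedekind algebra is Baer.
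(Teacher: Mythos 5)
Your proof is correct and matches the paper's own argument: both combine Lemma~\ref{lem: Dedekind is Baer}, Proposition~\ref{prop: S dense in A} applied to $A = D$, and the identification $S = \spec(D)$ from Lemma~\ref{lem: S is Sp(D)}. The only difference is the order in which these citations appear, which is immaterial.
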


\begin{proof}
Let $S$ be Baer-Specker and $D$ its Dedekind completion. By  Lemma~\ref{lem: S is Sp(D)}, $S = \spec(D)$. Since $D$ is Baer by Lemma~\ref{lem: Dedekind is Baer}, $S$ is uniformly dense in $D$ by Proposition~\ref{prop: S dense in A}.
\end{proof}

\begin{corollary} \label{cor: S(D) essential in D}
Let $D$ be a Dedekind algebra. Then $D$ is the Dedekind completion of its Specker subalgebra $\spec(D)$.
\end{corollary}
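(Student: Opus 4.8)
The plan is to reduce the statement to the essentiality criterion recorded in Remark~\ref{ex: prec_A}(2). Since $\spec(D)$ is an $\ell$-subalgebra of the Dedekind algebra $D$, to prove that $D$ is the Dedekind completion of $\spec(D)$ it suffices to show that $\spec(D)$ is essential in $D$; equivalently, by \cite[Prop.~2.12]{BMO16}, that for every $0 < d \in D$ there is $0 < a \in \spec(D)$ with $a \le d$. The ingredients I would assemble first are that $D$ is a Baer ring (Lemma~\ref{lem: Dedekind is Baer}) and hence that $\spec(D)$ is uniformly dense in $D$ (Proposition~\ref{prop: S dense in A}).

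To produce the witness $a$, fix $0 < d \in D$. First I would use that $D$ is archimedean to find $\varepsilon > 0$ with $(d - \varepsilon)^+ \ne 0$: otherwise $d \le \varepsilon$ for every $\varepsilon > 0$ would force $d \le 0$. Writing $u = (d - \varepsilon)^+$, we have $0 < u \le d$; set $m = \|u\| > 0$. By uniform density choose $s \in \spec(D)$ with $\|u - s\| \le m/3$, and put $a = (s - m/3)^+$, which lies in $\spec(D)$ because $\spec(D)$ is an $\ell$-subalgebra closed under $\vee$ with real constants. The estimate $u - m/3 \le s \le u + m/3$ then gives, on one hand, $s - m/3 \le u \le d$, whence $a = (s - m/3)\vee 0 \le u \vee 0 = u \le d$; and on the other hand $a \ge (u - 2m/3)^+$, which is nonzero since $\|u\| = m$ precludes $u \le 2m/3$. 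Thus $0 < a \le d$ with $a \in \spec(D)$, which establishes essentiality, and Remark~\ref{ex: prec_A}(2) then yields that $D$ is the Dedekind completion of $\spec(D)$.

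The main obstacle I anticipate is precisely the passage from uniform density to essentiality (i.e.\ order density): these two notions of density genuinely differ---for instance $C(X)$ is uniformly dense in itself but is rarely Dedekind complete---so one cannot simply quote Proposition~\ref{prop: S dense in A}. The content lies in the truncation $a = (s - m/3)^+$, whose strict positivity must be extracted from the norm estimate via the archimedean property, which is exactly why $\varepsilon$ and the bound $m$ are chosen with room to spare. A cleaner, density-free alternative worth noting would be to take the support idempotent $e$ of $u = (d - \varepsilon)^+$ furnished by the Baer property (so $\ann_D(u) = (1-e)D$) and to verify $0 < \varepsilon e \le d$ directly: since $(d-\varepsilon)^-u = 0$ we get $(d-\varepsilon)^- e = 0$, hence $(d-\varepsilon)e = u \ge 0$, giving $de \ge \varepsilon e$, while $d(1-e) \ge 0$ gives $d \ge de$. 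As $\varepsilon e \in \spec(D)$ and $e \ne 0$, this avoids the $\varepsilon/3$ bookkeeping at the cost of the orthogonality computation.
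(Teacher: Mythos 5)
Your argument is correct and follows the paper's route in outline: Lemma~\ref{lem: Dedekind is Baer} plus Proposition~\ref{prop: S dense in A} give uniform density of $\spec(D)$ in $D$, essentiality is then extracted from uniform density, and Remark~\ref{ex: prec_A}(2) (i.e.\ \cite[Prop.~2.12]{BMO16}) converts essentiality into the Dedekind-completion statement. The one substantive difference is at the density-to-essentiality step: the paper simply cites \cite[Lem.~3.16(1)]{BMO20c}, which produces a sequence $\{s_n\}$ in $\spec(D)$ with $0 \le s_n \le d$ converging to $d$ (so some $s_n$ is nonzero), whereas you prove this step by hand with the truncation $a = (s - m/3)^+$; your bookkeeping checks out --- the archimedean property does give $\varepsilon$ with $(d-\varepsilon)^+ \ne 0$ and guarantees $\|u\| > 0$, and the estimate $\|u - s\| \le m/3$ does force $0 < a \le d$ --- so you have in effect inlined the proof of the cited lemma, making the corollary self-contained. (Your motivating example is slightly off target: uniform density of an $\ell$-subalgebra in an archimedean algebra \emph{does} imply essentiality, precisely by this truncation; the point of the hypothesis is rather that $D$ itself must be Dedekind for the criterion of Remark~\ref{ex: prec_A}(2) to apply.) Your closing alternative is genuinely different from the paper and is the more interesting contribution: taking the support idempotent $e$ of $u = (d-\varepsilon)^+$ via the Baer property, the orthogonality $(d-\varepsilon)^+(d-\varepsilon)^- = 0$ gives $(d-\varepsilon)^- e = 0$, hence $(d-\varepsilon)e = ue = u \ge 0$, and since $0 \le e \le 1$ one gets $\varepsilon e \le de \le d$ with $0 \ne \varepsilon e \in \spec(D)$. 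This bypasses Proposition~\ref{prop: S dense in A} (and hence the analytic approximation machinery) entirely, showing directly that $\spec(A)$ is essential in any Baer $\bal$-algebra $A$; it is arguably closer in spirit to the paper's choice-free algebraic program, at the cost of the orthogonality computation you indicate.
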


\begin{proof}
By Proposition~\ref{prop: S dense in A}, $S$ is uniformly dense in $D$. Therefore, if $0 <d \in D$, then there is a sequence $\{s_n\}$ in $S$ converging to $d$ such that $0 \le s_n \le d$ (see, e.g., \cite[Lem.~3.16(1)]{BMO20c}). Thus, $S$ is essential in $D$, and hence $D$ is the Dedekind completion of $S$ by \cite[Prop.~2.12]{BMO16}. 
\end{proof}

The following corollary is a converse to Lemma~\ref{lem: Dedekind is Baer}. 

\begin{corollary} \label{cor: Baer ubal is Dedekind}
If $A \in \ubal$ is Baer, then $A$ is a Dedekind algebra.
\end{corollary}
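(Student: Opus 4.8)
The plan is to realize $A$ as the uniform completion of its Specker subalgebra and then compare that completion with the Dedekind completion of the same subalgebra. Set $S = \spec(A)$. First I would check that $\Id(S) = \Id(A)$: the inclusion $\Id(S) \subseteq \Id(A)$ holds because $S \subseteq A$, while $\Id(A) \subseteq S$ (as $S$ is generated by $\Id(A)$) gives $\Id(A) \subseteq \Id(S)$. Since $A$ is Baer, $\Id(A)$ is a complete Boolean algebra (see, e.g., \cite[Prop.~1.4.1]{Berb72}), so $\Id(S)$ is complete, and therefore $S$ is a Baer-Specker algebra by Remark~\ref{rem: Baer vs idempotents}.

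Next I would invoke the two density results. By Proposition~\ref{prop: S dense in A}, $S$ is uniformly dense in $A$. Letting $D$ be the Dedekind completion of $S$, Corollary~\ref{cor: S dense in D} shows that $S$ is uniformly dense in $D$ as well, and $D \in \ubal$ by Lemma~\ref{lem: Dedekind is Baer}. Thus $S$ sits as a uniformly dense $\bal$-subalgebra inside the two uniformly complete algebras $A$ and $D$.

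The crux is then to conclude $A \cong D$ in $\bal$. A uniformly complete $\bal$-algebra in which $S$ is uniformly dense is precisely a uniform completion of $S$, and the uniform completion is unique up to isomorphism (this is the content of $\ubal$ being reflective in $\bal$, as recorded in the introduction). Hence there is a $\bal$-isomorphism $A \cong D$ fixing $S$. Since being a Dedekind algebra is a property of the underlying lattice and is therefore preserved by $\bal$-isomorphisms, and $D$ is Dedekind by construction, it follows that $A$ is a Dedekind algebra.

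The main obstacle is exactly this last identification $A \cong D$; everything before it is routine bookkeeping with idempotents and the cited density lemmas. If a direct appeal to the uniqueness of uniform completions is to be avoided, I would instead build the isomorphism by hand: for $a \in A$ choose a sequence in $S$ converging to $a$ and send $a$ to the limit of that sequence computed in $D$. Uniform density on both sides, together with the fact that the norm and lattice structure of $S$ are inherited unchanged from both $A$ and $D$, shows that this assignment is well defined, is a $\bal$-morphism, and is bijective, with inverse defined symmetrically.
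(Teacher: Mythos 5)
Your proof is correct, but it diverges from the paper's at the crucial step. The paper takes $D$ to be the Dedekind completion of $A$ itself, so that $S = \spec(A) \subseteq A \subseteq D$ all live in one ambient algebra: it shows $S$ is essential in $A$ (as in the proof of Corollary~\ref{cor: S(D) essential in D}) and $A$ is essential in $D$ by \cite[Prop.~2.12]{BMO16}, hence $S$ is essential in $D$ and $D = D(S)$; then Corollary~\ref{cor: S dense in D} gives $S$, and therefore $A$, uniformly dense in $D$, so uniform completeness of $A$ forces the \emph{literal equality} $A = D$ (a complete subspace is closed, and it is dense). You instead take $D = D(S)$ from the outset and compare $A$ with $D$ as two abstract uniform completions of $S$, invoking uniqueness of the completion (reflectivity of $\ubal$ in $\bal$, or your hand-built isometric extension) and then transferring the Dedekind property along a $\bal$-isomorphism. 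Both routes are sound: yours trades the essentiality argument of \cite[Prop.~2.12]{BMO16} for an external uniqueness principle, whose ingredients the paper in fact records as facts (1)--(3) before Lemma~\ref{lem: morphisms are continuous}, and your observation that the norm of $S$ is intrinsic (determined by the order and unit, hence the same whether computed in $S$, $A$, or $D$) is exactly what makes the identity on $S$ an isometry between dense subspaces. Two small remarks: since the paper insists on choice-free arguments, your ``choose a sequence converging to $a$'' should be replaced by a canonical approximation (the explicit construction in Proposition~\ref{prop: S dense in A} provides one, as the idempotents there are determined by annihilators), and note that $S$ Baer follows equally from Remark~\ref{rem: Baer vs idempotents} as you say or from \cite[Thm.~4.3(2)]{BMMO15a} as the paper cites --- the paper's route has the aesthetic advantage that no isomorphism transfer is needed at all, while yours avoids any mention of essentiality.
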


\begin{proof}
Let $S$ be the Specker subalgebra of $A$ and let $D$ be the Dedekind completion of $A$. As we pointed out in the proof of Corollary~\ref{cor: S(D) essential in D}, $S$ is essential in $A$. Since $D$ is the Dedekind completion of $A$, we have that $A$ is essential in $D$ by \cite[Prop.~2.12]{BMO16}. Thus, $S$ is also essential in $D$, and so $D$ is the Dedekind completion of $S$. Because $A$ is Baer, 
$\Id(A)$ is complete, as pointed out in Remark~\ref{rem: complete idempotents}. Then $S$ is Baer by \cite[Thm.~4.3(2)]{BMMO15a}.
Therefore, 
$S$ is uniformly dense in $D$ by Corollary~\ref{cor: S dense in D}. Since $S\subseteq A\subseteq D$ and 
$S$ is uniformly dense in $D$, we also have that $A$ is uniformly dense in $D$. Because $A \in \ubal$, we conclude that $A = D$. Thus, $A$ is a Dedekind algebra. 
\end{proof}

Putting Lemma~\ref{lem: Dedekind is Baer} and Corollary~\ref{cor: Baer ubal is Dedekind} together, we obtain a direct choice-free proof of the following result in \cite{BMO13b}:

\begin{theorem}
Let $A\in\bal$. Then $A$ is a Dedekind algebra iff $A\in\ubal$ and $A$ is a Baer ring. 
\end{theorem}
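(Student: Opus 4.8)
The plan is to observe that this theorem is a biconditional whose two directions have each already been established, so the proof amounts to citing the appropriate prior result for each implication. There is essentially no new work to do here.

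For the forward direction, suppose $A$ is a Dedekind algebra. Then Lemma~\ref{lem: Dedekind is Baer} gives immediately that $A\in\ubal$ and that $A$ is a Baer ring; that is exactly the content of that lemma, so this direction is settled by a direct appeal. For the reverse direction, suppose $A\in\ubal$ and $A$ is a Baer ring. Then Corollary~\ref{cor: Baer ubal is Dedekind} gives precisely that $A$ is a Dedekind algebra. Combining the two yields the equivalence.

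The only point worth emphasizing is the significance rather than the difficulty: both halves have been proved in a choice-free manner (Lemma~\ref{lem: Dedekind is Baer} via the explicit construction of the generating idempotent $e=\bigvee\{a\in I:a\le 1\}$ for an annihilator ideal, and Corollary~\ref{cor: Baer ubal is Dedekind} via the chain of uniform-density and essentiality arguments passing through the Specker subalgebra $\spec(A)$). Thus the present theorem recovers the known result from \cite{BMO13b}, but without invoking Gelfand duality and hence without the axiom of choice, which is the whole point. Since the substantive arguments reside in the cited lemma and corollary, I do not expect any obstacle in assembling the equivalence; the proof is a one-line combination of the two preceding statements.
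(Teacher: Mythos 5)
Your proposal is correct and matches the paper exactly: the paper states this theorem immediately after the sentence ``Putting Lemma~\ref{lem: Dedekind is Baer} and Corollary~\ref{cor: Baer ubal is Dedekind} together, we obtain a direct choice-free proof,'' so its proof is precisely the one-line combination of those two results that you give. Your attributions of the two directions (the lemma for Dedekind $\Rightarrow$ uniformly complete and Baer, the corollary for the converse) and your remark on choice-freeness are both accurate.
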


\begin{remark} \label{rem: no nilpotents}
As promised earlier, we give a choice-free proof that each $A \in \bal$ has no nonzero nilpotent elements. If $a \in A$ with $a^n = 0$, then $|a|^n = 0$, so we may assume that $0 \le a$ with $a^n = 0$. Let $D$ be the Dedekind completion of $A$ and $S=\spec(D)$. Then $S$ is join-dense in $D$ by Corollary~\ref{cor: S(D) essential in D}. Therefore, if $a \ne 0$, there is $0 < b \in S$ with $b \le a$. Thus, $0 \le b^n \le a^n = 0$, so $b^n = 0$. Write $b = \sum_{i=1}^m r_i e_i$ in orthogonal form. Then $0 = b^n = \sum_{i=1}^m r_i^n e_i$. Multiplying by $e_i$ gives $r_i^n e_i = 0$, so $r_i = 0$ or $e_i = 0$ for each $i$. This implies $b = 0$, a contradiction. Consequently, $a = 0$ and hence $A$ has no nonzero nilpotents.
\end{remark}

Our next goal is to show that if $(S,\ll)\in\PBSp$, then $\ll$ extends to a KT-proximity on the Dedekind completion of $S$. For this we will utilize the Dieudonn\'{e} Lemma again. 

\begin{proposition} \label{prop: lifting proximity to Dedekind completion}
Let $(S, \ll)$ be a proximity Baer-Specker algebra and $D$ the Dedekind completion of $S$. If $\lhd$ is the closure of $\ll$ in $D \times D$, then $\lhd$ is a KT-proximity on $D$ and hence $(D,\lhd)$ is a KT-algebra.
\end{proposition}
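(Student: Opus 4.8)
The plan is to apply Dieudonn\'{e}'s lemma (Theorem~\ref{thm: Dieudonne}) to the pair $(S, \ll)$. That theorem takes a proximity $\bal$-algebra whose Dedekind completion $D$ contains it as a uniformly dense subalgebra, and concludes that the closure of the proximity in $D \times D$ is a KT-proximity. Thus the proof reduces to verifying the two hypotheses of Theorem~\ref{thm: Dieudonne}: first, that $(S, \ll)$ is genuinely a proximity $\bal$-algebra in the sense of Definition~\ref{def: proximity bal}; and second, that $S$ is uniformly dense in its Dedekind completion $D$.

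First I would address the uniform density, which is the substantive input. Since $(S, \ll) \in \PBSp$, the algebra $S$ is a Baer-Specker algebra, and Corollary~\ref{cor: S dense in D} states precisely that a Baer-Specker algebra is uniformly dense in its Dedekind completion. So this hypothesis is delivered immediately by the machinery built up in this section (Lemma~\ref{lem: S is Sp(D)}, Proposition~\ref{prop: S dense in A}, and their corollaries). This is the reason the section develops the density results before stating this proposition.

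Next I would confirm that $(S, \ll)$ satisfies the axioms of a proximity $\bal$-algebra. A proximity Baer-Specker algebra is by Definition~\ref{def:PBSp} a proximity $\bal$-algebra $(A, \lhd)$ in which $A$ happens to be a Baer-Specker algebra; so $(S, \ll)$ already satisfies axioms (P1)--(P10) by definition, and in particular $\ll$ is a proximity on the $\bal$-algebra $S$. Hence both hypotheses of Theorem~\ref{thm: Dieudonne} hold for $(S, \ll)$ with its Dedekind completion $D$.

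With both hypotheses verified, Theorem~\ref{thm: Dieudonne} applies directly and yields that the closure $\lhd$ of $\ll$ in $D \times D$ is a KT-proximity on $D$, and hence that $(D, \lhd)$ is a KT-algebra, which is exactly the assertion. In this formulation there is essentially no residual obstacle in the argument itself; the real work has been offloaded into the preparatory density results and into Dieudonn\'{e}'s lemma. If I were not permitted to cite Corollary~\ref{cor: S dense in D}, the main obstacle would be establishing uniform density of a Baer-Specker algebra in its Dedekind completion, which is precisely the delicate approximation argument carried out in Proposition~\ref{prop: S dense in A} (decomposing a positive element via the truncations $a_n = (a \wedge n) - (a \wedge (n-1))$ and producing idempotents sandwiched between consecutive truncations). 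But since that result is already available, the proof here is a short invocation of Theorem~\ref{thm: Dieudonne}.
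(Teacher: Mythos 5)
Your proof is correct and is essentially identical to the paper's: both verify uniform density via Corollary~\ref{cor: S dense in D} and then invoke Dieudonn\'{e}'s lemma (Theorem~\ref{thm: Dieudonne}), with the closedness of $\lhd$ holding by construction. Your extra remark that $(S,\ll)$ is a proximity $\bal$-algebra by Definition~\ref{def:PBSp} is a harmless (and accurate) elaboration the paper leaves implicit.
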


\begin{proof}
By Corollary~\ref{cor: S dense in D}, $S$ is uniformly dense in $D$. Therefore, by Theorem~\ref{thm: Dieudonne}, $\lhd$ is a KT-proximity on $D$. Since $\lhd$ is closed by definition, $(D, \lhd) \in \cpda$.
\end{proof}

We next show how to lift weak proximity morphisms. For this we need the following well-known facts. The proof of (1) is straightforward, the proof of (2) is given in \cite[Prop.~II.3.7.13]{Bou89}, and the proof of (3) is given in  \cite[Thm.~II.3.6.2]{Bou89}.
\begin{enumerate}
\item If $\varphi : V_1 \to V_2$ is a function between normed vector spaces such that $\|\varphi(x) - \varphi(y)\| \le \|x-y\|$ for each $x,y \in V_1$, then $\varphi$ is uniformly continuous.
\item If $X$ is a complete metric space and $Y$ a dense subspace of $X$, then $X$ is (isometric to) the completion of $Y$.
\item Let $X, X'$ be complete metric spaces, $Y$ a dense subspace of $X$, and $Y'$ a dense subspace of $X'$. If $\varphi : Y \to Y'$ is a uniformly continuous map, then there is a unique extension of $\varphi$ to a uniformly continuous map $X \to X'$.
\end{enumerate}

\begin{lemma} \label{lem: morphisms are continuous}
Let $A, A' \in \bal$. If $\alpha : A \to A' $ is order preserving and $\alpha(a + r) = \alpha(a) + r$ for each $a \in A$ and $r \in \mathbb{R}$, then $\alpha$ is uniformly continuous. In particular, a weak proximity morphism is uniformly continuous.
\end{lemma}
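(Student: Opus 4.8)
The plan is to show that $\alpha$ is non-expansive, i.e. that $\|\alpha(a) - \alpha(b)\| \le \|a - b\|$ for all $a, b \in A$, and then to invoke fact (1) above (applied to the underlying normed vector spaces of $A$ and $A'$) to conclude that $\alpha$ is uniformly continuous.

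First I would fix $a, b \in A$ and set $r = \|a - b\|$. By the definition of the norm in a $\bal$-algebra we have $|a - b| \le r \cdot 1$; that this infimum is attained follows from the archimedean property, since $|a - b| \le (r + 1/n) \cdot 1$ for every $n \ge 1$ gives $n(|a-b| - r) \le 1$ for all $n$, forcing $|a-b| \le r \cdot 1$. This inequality unpacks to the order bounds $b - r \le a \le b + r$ in $A$.

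Next, since $\alpha$ is order preserving we obtain $\alpha(b - r) \le \alpha(a) \le \alpha(b + r)$. Applying the translation hypothesis $\alpha(x + s) = \alpha(x) + s$ with $x = b$ and $s = \pm r \in \mathbb{R}$, the outer terms collapse to $\alpha(b) - r$ and $\alpha(b) + r$, so that $\alpha(b) - r \le \alpha(a) \le \alpha(b) + r$. This is exactly $|\alpha(a) - \alpha(b)| \le r \cdot 1$, whence $\|\alpha(a) - \alpha(b)\| \le r = \|a - b\|$ directly from the definition of the norm as an infimum. This establishes non-expansiveness and hence uniform continuity.

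For the final assertion, I would observe that a weak proximity morphism satisfies both hypotheses: axiom (PM7$'$) is precisely the translation property $\alpha(a + r) = \alpha(a) + r$, and axiom (PM2) forces $\alpha$ to be order preserving, since $a \le b$ gives $a = a \wedge b$ and hence $\alpha(a) = \alpha(a) \wedge \alpha(b) \le \alpha(b)$. Thus the first part applies. I expect no genuine obstacle in this argument; the only point needing a little care is the passage between the norm and the lattice order at both ends, which rests on the standard fact that $|c| \le \|c\| \cdot 1$ in any $\bal$-algebra.
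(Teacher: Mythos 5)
Your proposal is correct and follows essentially the same route as the paper's proof: set $\varepsilon = \|a-b\|$, derive the order bounds $b - \varepsilon \le a \le b + \varepsilon$, apply order preservation together with the translation property to get $\alpha(b) - \varepsilon \le \alpha(a) \le \alpha(b) + \varepsilon$, and conclude non-expansiveness, hence uniform continuity. Your extra details---justifying $|a-b| \le \|a-b\| \cdot 1$ via the archimedean property and deriving order preservation of a weak proximity morphism from (PM2)---are correct points the paper leaves implicit.
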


\begin{proof}
Let $a,b \in A$ and set $\|a-b\| = \varepsilon$. Then $b - \varepsilon \le a \le b + \varepsilon$. By the hypotheses on $\alpha$ we have
\[
\alpha(b) - \varepsilon = \alpha(b - \varepsilon) \le \alpha(a) \le \alpha(b + \varepsilon) = \alpha(b) + \varepsilon.
\]
Therefore, $\|\alpha(a) - \alpha(b) \| \le \varepsilon = \|a - b\|$. Consequently, $\alpha$ is uniformly continuous.
\end{proof}

In the proof of the following proposition we will use several results from the Appendix.

\begin{proposition} \label{prop: lifting proximity morphism to Dedekind completion}
Let $\alpha : (S, \ll) \to (S', \ll')$ be a weak proximity morphism between proximity Baer-Specker algebras, let  $\lhd$ be the closure of $\ll$ in $D(S) \times D(S)$, and let $\lhd'$ be the closure of $\ll'$ in  $D(S') \times D(S')$. Then the  unique uniformly continuous extension $\beta:(D(S),\lhd) \rightarrow (D(S'),\lhd')$ of $\alpha$ is a proximity morphism.

\end{proposition}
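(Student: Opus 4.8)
The plan is to realize $\beta$ as the unique uniformly continuous extension of $\alpha$ and then verify the eight axioms of Definition~\ref{def: proximity morphism} in groups, passing from $S$ to $D(S)$ by a combination of density, continuity, and closedness of the proximities. First I would record that $\alpha$ is uniformly continuous by Lemma~\ref{lem: morphisms are continuous}, and that $S$ and $S'$ are uniformly dense in $D(S)$ and $D(S')$ respectively by Corollary~\ref{cor: S dense in D}. Since $D(S)$ and $D(S')$ are complete, the completion facts recalled before Lemma~\ref{lem: morphisms are continuous} produce a unique uniformly continuous $\beta : D(S) \to D(S')$ extending $\alpha$; moreover $(D(S),\lhd)$ and $(D(S'),\lhd')$ are KT-algebras by Proposition~\ref{prop: lifting proximity to Dedekind completion}, so the target axioms are meaningful.

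Second, I would dispatch the axioms that are continuous identities in their arguments. Each of (PM1), (PM2), (PM5), together with the weak axioms (PM6$^\prime$) and (PM7$^\prime$), asserts an equality between two norm-continuous functions of the relevant variables, since $\wedge$, scalar multiplication, and translation by a real are all uniformly continuous; as these equalities hold on the dense subalgebra $S$, where $\beta$ agrees with $\alpha$, they extend to all of $D(S)$. In particular (PM2) for $\beta$ shows that $\beta$ is order preserving, a fact I use repeatedly below.

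Third, (PM3) I would obtain from the closedness of $\lhd'$. Given $a \lhd b$ in $D(S)$, the description of $\lhd$ as the closure of $\ll$ supplies sequences $s_n \ll t_n$ in $S$ with $s_n \to a$ and $t_n \to b$. Applying (PM3) for $\alpha$ gives $-\alpha(-s_n) \ll' \alpha(t_n)$, so each pair $(-\alpha(-s_n),\alpha(t_n))$ lies in ${\ll'} \subseteq {\lhd'}$; letting $n \to \infty$ and using the continuity of $\beta$ and of negation, together with the fact that $\lhd'$ is a closed subset of $D(S') \times D(S')$, yields $(-\beta(-a),\beta(b)) \in {\lhd'}$, which is (PM3).

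The main obstacle is the remaining group (PM6), (PM7), (PM8) and the join axiom (PM4), all of which concern the behaviour of $\beta$ against the reflexive elements $A = \mathfrak{R}(D(S),\lhd)$. The difficulty is that $A$ is in general far larger than the reflexive part of $S$ and is not uniformly dense in $D(S)$ (for $X\in\KHaus$ this is precisely the gap between $C(X)$ and the finitely-valued continuous functions sitting inside $FN(X)$), so these identities cannot simply be inherited from the corresponding properties of $\alpha$. Here I would approximate a reflexive $c \in D(S)$ by a $\ll$-comparable pair $s_n \ll t_n$ in $S$ with $s_n, t_n \to c$, which squeezes $c$ from both sides, and feed this approximation into the technical lemmas established in the Appendix, which supply exactly the operation-preservation estimates for $\alpha$ relative to proximity-comparable elements; passing to the uniform limit then yields $\beta(a\vee c)=\beta(a)\vee\beta(c)$, $\beta(a+c)=\beta(a)+\beta(c)$, and $\beta(ca)=\beta(c)\beta(a)$ for $c \ge 0$. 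For (PM4) I would combine order preservation, which shows that $\beta(b)$ is an upper bound of $\{\beta(a):a\lhd b\}$, with the join-density of $A$ in $D(S)$, writing $b=\bigvee\{c\in A:c\le b\}$ and using the Appendix estimates to see that $\beta$ preserves this join. Overcoming this reflexive-element gap, rather than the routine density arguments of the earlier steps, is where the real work lies.
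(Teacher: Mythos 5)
Your overall architecture (unique uniformly continuous extension via Lemma~\ref{lem: morphisms are continuous}, Corollary~\ref{cor: S dense in D}, and Proposition~\ref{prop: lifting proximity to Dedekind completion}) and your handling of (PM1)--(PM3), (PM5), and (PM6)--(PM8) track the paper's proof: in particular, squeezing a reflexive $c$ between sequences $s_n \ll t_n$ with $s_n, t_n \to c$ and feeding this into Lemma~\ref{lem: join and proximity morphism} for the inequality $\le$, with the reverse inequality coming from order preservation (for $\vee$) and Lemma~\ref{lem: inequality} (for $+$ and $\cdot$), is exactly what the paper does. (One small omission: the estimate in Lemma~\ref{lem: join and proximity morphism}(3) requires both factors nonnegative, so for (PM8) you must first reduce to $d \ge 0$, which the paper does via \cite[Rem.~8.9]{BMO16}.)

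The genuine gap is (PM4). You propose to write $b = \bigvee\{c \in A : c \le b\}$ by join-density of $A = \dv{R}(D(S), \lhd)$ and then ``use the Appendix estimates to see that $\beta$ preserves this join.'' But preservation of that join by $\beta$ is essentially the content of (PM4) itself, and nothing in the Appendix supplies it: Lemmas~\ref{lem: inequality} and \ref{lem: join and proximity morphism} are finite operation estimates against a single $\ll$-comparable pair and say nothing about infinite joins. Moreover, as you yourself observe, $A$ is not uniformly dense in $D(S)$, so you cannot reach these reflexive approximants by norm limits either. The workable route (the paper's) goes through $S$, which \emph{is} uniformly dense: choose $a_n \in S$ with $a_n \le b$ and $a_n \to b$ (see \cite[Lem.~3.16(1)]{BMO20c}), so that $\beta(b) = \lim \alpha(a_n) = \bigvee\{\alpha(a) : a \in S,\ a \le b\}$; then --- and this is the ingredient entirely absent from your sketch --- apply (PM4) \emph{for $\alpha$}: $\alpha(a) = \bigvee\{\alpha(a') : a' \in S,\ a' \ll a\}$, and $a' \ll a \le b$ gives $a' \lhd b$, whence $\beta(b) = \bigvee\{\alpha(a') : a' \in S,\ a' \lhd b\} \le \bigvee\{\beta(c) : c \lhd b\} \le \beta(b)$. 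Without invoking (PM4) for $\alpha$, approximation from below only produces elements satisfying $a \le b$, and $a \le b$ does not imply $a \lhd b$, so the required $\lhd$-approximants of $b$ never enter your argument; this step cannot be repaired by the tools you cite.
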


\begin{proof}
We note that $\beta$ is well defined since $\alpha$ is uniformly continuous by Lemma~\ref{lem: morphisms are continuous} and $S$ is uniformly dense in $D(S)$ by Corollary~\ref{cor: S dense in D}. We then have $\beta(d) = \lim \alpha(a_n)$ for any sequence $\{a_n\}$ in $S$ converging to $d$. By Proposition~\ref{prop: lifting proximity to Dedekind completion}, the closure $\lhd$ of $\ll$ is a KT-proximity, and so is the closure $\lhd'$ of $\ll'$. We show that $\beta$ is a proximity morphism. 

(PM1) Since $\beta$ extends $\alpha$, we have $\beta(0) = \alpha(0) = 0$ and $\beta(1) = \alpha(1) = 1$.

(PM2) Let $c, d \in D(S)$. Say $c = \lim a_n$ and $d = \lim b_n$, where $\{a_n\}, \{b_n\} \subseteq S$. Then $c \wedge d = \lim (a_n \wedge b_n)$. Therefore, since $\alpha$ satisfies (PM2),
\begin{align*}
\beta(c \wedge d) &= \lim \alpha(a_n \wedge b_n) = \lim (\alpha(a_n) \wedge \alpha(b_n)) \\
&= \lim(\alpha(a_n)) \wedge \lim(\alpha(b_n)) = \beta(c) \wedge \beta(d).
\end{align*}

(PM3) Suppose that $c \lhd d$. Then there are sequences $\{a_n\}, \{b_n\}$ in $S$ with $c = \lim a_n$, $d = \lim b_n$, and $a_n \ll b_n$ for each $n$. We have $-\alpha(-a_n) \ll' \alpha(b_n)$. Taking limits yields $-\beta(-c) \lhd' \beta(d)$.

(PM4) We first show that $\beta(d) = \bigvee \{ \alpha(a) : a \in S, a \le d\}$. The inequality $\ge$ holds since $\beta$ is order preserving by (PM2) and extends $\alpha$. For the reverse inequality, we may write $d = \lim a_n$ with $a_n \le d$ for each $n$ (see, e.g., \cite[Lem.~3.16(1)]{BMO20c}). Then $\alpha(a_n)$ is below the join for each $n$, and so the limit is below the join. This yields the equality.
Therefore, by (PM4) applied to $\alpha$ in the second and third equalities below,
\begin{align*}
\beta(d) &= \bigvee \{ \alpha(a) : a \in S, a \le d\} = \bigvee \left\{ \bigvee \{ \alpha(b) : b \in S, b \ll a\} : a \in S, a \le d\right\} \\
&= \bigvee \{ \alpha(b) : b \in S, b \lhd d\}.
\end{align*}
From this it follows that $\beta(d) = \bigvee \{ \beta(c) : c \lhd d\}$.

(PM5) Let $0 < r \in \mathbb{R}$ and write $d = \lim a_n$. Since $\alpha$ satisfies (PM5), we have 
\[
\beta(rd) = \lim\alpha(ra_n) = \lim r\alpha(a_n) = r\beta(d).
\]

(PM6) Let $c, d \in D(S)$ with $c \lhd c$. There are sequences $\{a_n\}, \{b_n\}, \{b'_n\}$ in $S$ with $b_n \ll b'_n$ for each $n$ such that $d = \lim a_n$ and $c = \lim b_n = \lim b'_n$. By Lemma~\ref{lem: join and proximity morphism}(1), $\alpha(a_n \vee b_n) \le \alpha(a_n) \vee \alpha(b'_n)$. Therefore,
\begin{align*}
\beta(d \vee c) &= \lim \alpha(a_n \vee b_n) \le \lim (\alpha(a_n) \vee \alpha(b'_n)) \\
&= \lim \alpha(a_n) \vee \lim \alpha(b'_n) = \beta(d) \vee \beta(c) \le \beta(d \vee c).
\end{align*}
where the final inequality holds since $\beta$ is order preserving. Thus, $\beta(d \vee c) = \beta(d) \vee \beta(c)$.

(PM7) Let $c,d \in D(S)$ with $c \lhd c$. There are sequences $\{a_n\}, \{b_n\}, \{b'_n\}$ in $S$ with $b_n \ll b'_n$ for each $n$ such that $d = \lim a_n$ and $c = \lim b_n = \lim b'_n$. By Lemma~\ref{lem: join and proximity morphism}(2), $\alpha(a_n + b_n) \le \alpha(a_n) + \alpha(b'_n)$. Therefore,
\begin{align*}
\beta(d + c) &= \lim \alpha(a_n + b_n) \le \lim (\alpha(a_n) + \alpha(b'_n)) \\
&= \lim \alpha(a_n) + \lim \alpha(b'_n) = \beta(d) + \beta(c) \le \beta(d + c).
\end{align*}
where the final inequality holds by Lemma~\ref{lem: inequality}(1). Thus, $\beta(d + c) = \beta(d) + \beta(c)$.

(PM8) Let $ c,d \in D(S)$  with  $0 \leq c \lhd c$. We show $\beta(cd) = \beta(c)\beta(d)$. 
 By \cite[Rem.~8.9]{BMO16}, it suffices to prove this for $d \geq 0$.
There are sequences $\{a_n\}, \{b_n\}, \{b'_n\}$ of nonnegative elements in $S$ with $b_n \ll b'_n$ for each $n$ such that $d = \lim a_n$ and $c = \lim b_n = \lim b'_n$. By Lemma~\ref{lem: join and proximity morphism}(3), $\alpha(a_n  b_n) \le \alpha(a_n)  \alpha(b'_n)$. Therefore,
\begin{align*}
\beta(dc) &= \lim \alpha(a_n b_n) \le \lim (\alpha(a_n)  \alpha(b'_n)) \\
&= \lim \alpha(a_n)  \lim \alpha(b'_n) = \beta(d)  \beta(c) \le \beta(dc),
\end{align*}
where the final inequality holds by Lemma~\ref{lem: inequality}(2). Thus, $\beta(dc) = \beta(d) \beta(c)$. 
\end{proof}

\begin{proposition} \label{thm: functor D}
There is a functor $D : \PBSp \to \cpda$ that sends $(S, \ll)$ to $(D(S), \lhd)$ where $\lhd$ is the closure of $\ll$ in $D(S) \times D(S)$, and a proximity morphism $\alpha : (S, \ll) \to (S', \ll')$ to the unique continuous extension $D(\alpha) = \beta : (D(S),\lhd) \rightarrow (D(S'),\lhd')$ of $\alpha$.
\end{proposition}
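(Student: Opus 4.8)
The plan is to assemble the two preceding propositions and then verify the two functoriality conditions. Well-definedness on objects is exactly Proposition~\ref{prop: lifting proximity to Dedekind completion}, which yields $(D(S),\lhd)\in\cpda$, and well-definedness on morphisms is Proposition~\ref{prop: lifting proximity morphism to Dedekind completion}, which yields that $D(\alpha)=\beta$ is a proximity morphism. So only preservation of identities and of composition remains. For identities, the identity morphism of $(S,\ll)$ in $\PBSp$ is the set-theoretic map $\mathrm{id}_S$, and its unique uniformly continuous extension to $D(S)$ is $\mathrm{id}_{D(S)}$, which is the identity morphism of $(D(S),\lhd)$ in $\cpda$; hence $D(\mathrm{id}_S)=\mathrm{id}_{D(S)}$ and this step is routine.

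The substantive part is preservation of composition. Given weak proximity morphisms $\alpha_1:(S_1,\ll_1)\to(S_2,\ll_2)$ and $\alpha_2:(S_2,\ll_2)\to(S_3,\ll_3)$ with lifts $\beta_i=D(\alpha_i)$, I must show $D(\alpha_2\star\alpha_1)=\beta_2\star\beta_1$ as maps $D(S_1)\to D(S_3)$. The key observation is that both sides are uniformly continuous: each is a proximity morphism (the left-hand side by Proposition~\ref{prop: lifting proximity morphism to Dedekind completion} applied to the $\PBSp$-composite $\alpha_2\star\alpha_1$, the right-hand side because $\cpda$ is a category), and every proximity morphism is order preserving by (PM2) and satisfies $\mu(a+r)=\mu(a)+r$ for $r\in\mathbb R$ by (PM7) together with $\mu(r)=r$, hence is uniformly continuous by Lemma~\ref{lem: morphisms are continuous}. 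Since $S_1$ is uniformly dense in $D(S_1)$ by Corollary~\ref{cor: S dense in D}, it suffices to prove the two maps agree on $S_1$.

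Fix $s\in S_1$. Then $D(\alpha_2\star\alpha_1)(s)=(\alpha_2\star\alpha_1)(s)=\bigvee\{\alpha_2\alpha_1(t):t\in S_1,\ t\ll_1 s\}$, while $(\beta_2\star\beta_1)(s)=\bigvee\{\beta_2\beta_1(x):x\in D(S_1),\ x\lhd_1 s\}$. One inequality is immediate: every $t\in S_1$ with $t\ll_1 s$ satisfies $t\lhd_1 s$ and $\beta_2\beta_1(t)=\alpha_2\alpha_1(t)$ since $\beta_i$ extends $\alpha_i$, so the first join is dominated by the second. For the reverse inequality I would take $x\lhd_1 s$ and choose sequences $\{u_n\},\{v_n\}$ in $S_1$ with $u_n\ll_1 v_n$, $u_n\to x$, and $v_n\to s$; continuity of $\beta_1,\beta_2$ then gives $\beta_2\beta_1(x)=\lim\alpha_2\alpha_1(u_n)$. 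Setting $\varepsilon_n=\|v_n-s\|\to 0$, from $v_n\le s+\varepsilon_n$ and $u_n\ll_1 v_n$ one obtains $u_n-\varepsilon_n\ll_1 s$ (using reflexivity of constants together with axioms (P3) and (P6)), whence $\alpha_2\alpha_1(u_n)-\varepsilon_n=\alpha_2\alpha_1(u_n-\varepsilon_n)\le(\alpha_2\star\alpha_1)(s)$ by (PM7$'$). Letting $n\to\infty$ gives $\beta_2\beta_1(x)\le(\alpha_2\star\alpha_1)(s)$, and taking the join over all such $x$ completes the argument; by density and continuity the two maps then coincide on all of $D(S_1)$.

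The main obstacle is precisely this last inequality on the dense subalgebra: the join defining the $\cpda$-composition ranges over all $x\lhd_1 s$ in the completion, and bounding $\beta_2\beta_1(x)$ by the $\PBSp$-composition---which only sees elements of $S_1$ lying $\ll_1$-below $s$---requires the $\varepsilon$-perturbation $u_n-\varepsilon_n\ll_1 s$ to push the approximants strictly below $s$ in the proximity. A secondary point requiring care is confirming that proximity morphisms are uniformly continuous, so that agreement on $S_1$ propagates to all of $D(S_1)$.
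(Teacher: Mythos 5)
Your proof is correct, but for the substantive step---preservation of composition---it takes a genuinely different route from the paper. The paper does not verify agreement of $\beta_2 \star \beta_1$ and $D(\alpha_2 \star \alpha_1)$ directly on all of $S_1$; instead it reduces further to the idempotents, noting $\Id(D(S_1)) = \Id(S_1)$ by Lemma~\ref{lem: S is Sp(D)}, that agreement on $\Id(S_1)$ propagates to $S_1$ via decreasing decompositions (Lemma~\ref{lem: technical properties}(6)), and then to $D(S_1)$ by density and continuity as you do. On idempotents the paper compares the two joins by rewriting $(\beta_2\star\beta_1)(f)$ via the axiom (KT) as a join over reflexive elements $a \le f$, and then trading these against idempotents $e \ll f$ with $a \le e + \varepsilon$ using Lemma~\ref{lem: M4}(2). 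Your argument replaces this boolean-skeleton reduction with a direct $\varepsilon$-perturbation on the dense Specker subalgebra: you exploit that $\lhd_1$ is by definition the (sequential, since the norm topology is metric) closure of $\ll_1$, pick approximating pairs $u_n \ll_1 v_n$, and push $u_n - \varepsilon_n \ll_1 s$ using reflexivity of real constants, (P3), (P6), and (PM7$'$). Both routes are sound; yours avoids Lemma~\ref{lem: M4}(2) and the decomposition machinery entirely and is more analytic, at the cost of sequence bookkeeping and two facts you use tacitly but correctly (that the norm-closedness of the order lets you pass $c_n \le d + \varepsilon_n$ to the limit, and that an existing join in $S_3$ agrees with the join in $D(S_3)$---the paper records the latter in the proof of Lemma~\ref{lem: S is Sp(D)}), whereas the paper's idempotent reduction reuses lemmas needed anyway in Theorem~\ref{thm: Id and D are equivalences}, where the same join-comparison technique reappears. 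Your observation that both composites are proximity morphisms, hence uniformly continuous by Lemma~\ref{lem: morphisms are continuous} (since (PM2) gives order preservation and (PM5)/(PM7) with reflexivity of constants give $\mu(a+r) = \mu(a)+r$), matches the paper's use of that lemma and correctly justifies the final density step.
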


\begin{proof}
By Propositions~\ref{prop: lifting proximity to Dedekind completion} and ~\ref{prop: lifting proximity morphism to Dedekind completion}, $D$ is well defined on objects and on morphisms. It is clear that $D$ sends identity maps to identity maps. To show that it preserves composition, let $\alpha_1 : (S_1, \ll_1) \to (S_2, \ll_2)$ and $\alpha_2 : (S_2, \ll_2) \to (S_3, \ll_3)$ be proximity morphisms between objects of $\PBSp$. Let 
$\beta_i$ be the unique continuous extension of $\alpha_i$ for $i = 1,2$. We need to show that 
$\beta_2 \star \beta_1$ is the unique continuous extension of $\alpha_2 \star \alpha_1$. For this it suffices to show that 
$\beta_2 \star \beta_1$ and $D(\alpha_2 \star \alpha_1)$ agree on $\Id(D(S_1))$, which is equal to $\Id(S_1)$ by Lemma~\ref{lem: S is Sp(D)}. For, if they agree on $\Id(S_1)$, then \cite[Lem.~6.4(2)]{BCMO22a} shows that they agree on $S_1$. Finally, as $S_1$ is uniformly dense in $D(S_1) $ by Corollary~\ref{cor: S dense in D} and both $\beta_2 \star \beta_1$ and $D(\alpha_2 \star \alpha_1)$ are continuous by Lemma~\ref{lem: morphisms are continuous}, they must agree on $D(S_1)$.

Let $f \in \Id(D(S_1))$. Then $(\beta_2\star \beta_1)(f) = \bigvee \{ \beta_2\beta_1(d) : d \in D(S_1), d \lhd f\}$. Since $\lhd$ is a KT-proximity, $(\beta_2\star \beta_1)(f) = \bigvee \{ \beta_2\beta_1(a) : a \in \dv{R}(D(S_1)), a \le f\}$. Fix $0 < \varepsilon \in \mathbb{R}$. If $a \le f$, then there is $e \in \Id(D(S_1))$ with $e \ll f$ and $a \le e + \varepsilon$ by Lemma~\ref{lem: M4}(2). Therefore, $\beta_2\beta_1(a)  \le \beta_2\beta_1(e) + \varepsilon$. Since this is true for each $a$ and $\varepsilon$, it follows that
\[
\bigvee \{ \beta_2\beta_1(a) : a \in \dv{R}(D(S_1)), a \le f\} \le \bigvee \{ \beta_2\beta_1(e) : e \in \Id(D(S_1)), e \ll f\}. 
\]
On the other hand, if $e \ll f$, there is $a \in \dv{R}(D(S_1))$ with $e \le a \le f$. Therefore, 
\[
\bigvee \{ \beta_2\beta_1(e) : e \in \Id(D(S_1)), e \ll f\} \le \bigvee \{ \beta_2\beta_1(a) : a \in \dv{R}(D(S_1)), a \le f\}. 
\]
Thus,
\begin{align*}
(\beta_2\star \beta_1)(f) &= \bigvee \{ \beta_2\beta_1(a) : a \in \dv{R}(D(S_1)), a \le f\} = \bigvee \{ \beta_2\beta_1(e) : e \in \Id(D(S_1)), e \ll f\} \\
&= \bigvee \{ \alpha_2\alpha_1(e) : e \in \Id(D(S_1)), e \ll f\}
= (\alpha_2 \star \alpha_1)(f) = D(\alpha_2 \star \alpha_1)(f). 
\end{align*}
Consequently, $\beta_2 \star \beta_1$ and $D(\alpha_2 \star \alpha_1)$ agree on $\Id(D(S_1))$, and the result follows. 
\end{proof}

We now prove one of the main results of the article. For this we recall that each element $s$ of a Specker algebra $S$ has a \emph{decreasing decomposition} $s = r_0 + \sum_{i=1}^n r_i e_i$ where $r_i \in \mathbb{R}$, $1 \ge e_1 \ge \cdots \ge e_n$ are idempotents of $S$, and $r_i \ge 0$ for $i \ge 1$ (see the Appendix). 

\begin{theorem} \label{thm: Id and D are equivalences} 
The functors $\Id : \cpda \to \dev$ and $D : \PBSp \to \cpda$ are equivalences, and the following diagram commutes up to natural isomorphism.
\[
\begin{tikzcd}
& \cpda \arrow[dl, "\Id"'] \\
\dev \arrow[dr, shift right = .2pc, "\Sp"'] & \\
& \PBSp \arrow[ul, shift right = .2pc, "\Id"'] \arrow[uu, "D"']
\end{tikzcd}
\]
\end{theorem}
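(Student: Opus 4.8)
The plan is to build on the already-established equivalence between $\dev$ and $\PBSp$ given by $\Sp$ and $\Id$ (Theorem~\ref{thm: dev = PBSp}) and to deduce the two new equivalences by a $2$-out-of-$3$ argument once the triangle is shown to commute. Concretely, I would first prove that $\Id\circ D\cong\Id$ as functors $\PBSp\to\dev$; since $\Id:\PBSp\to\dev$ is an equivalence, this reduces matters to showing that $\Id:\cpda\to\dev$ is an equivalence, after which applying a quasi-inverse $K$ of $\Id_{\cpda}$ to the triangle gives $D\cong K\circ\Id_{\PBSp}$, a composite of equivalences, so $D:\PBSp\to\cpda$ is automatically an equivalence.

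For the triangle, observe that $\Id(D(S,\ll))=(\Id(D(S)),\lhd|_{\Id(D(S))})$ while $\Id(S,\ll)=(\Id(S),\ll|_{\Id(S)})$, and $\Id(D(S))=\Id(S)$ by Lemma~\ref{lem: S is Sp(D)}. The candidate natural isomorphism is the identity on this common boolean algebra, and naturality is then immediate because $D(\alpha)$ extends $\alpha$ and hence agrees with it on idempotents. Thus the entire content is the equality of the two de Vries proximities on $\Id(S)$: for idempotents $e,f$ one must show $e\lhd f\iff e\ll f$, where $\lhd$ is the closure of $\ll$. One implication is trivial since $\ll\subseteq\lhd$.

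The main obstacle is the reverse implication $e\lhd f\Rightarrow e\ll f$ for idempotents, which is exactly the step where the closure must be pushed back onto $\Id(S)$. Here I would use that, since $\lhd$ is a KT-proximity, $e\lhd f$ holds iff there is a reflexive $c$ with $e\le c\le f$ (forward by (KT) and (P2), backward by (P3)); so $e\lhd f$ yields a reflexive $c\in\dv{R}(D,\lhd)$ with $e\le c\le f$. Because $\lhd$ is the closure of $\ll$, reflexivity of $c$ produces sequences $\{c_n\},\{d_n\}$ in $S$ converging uniformly to $c$ with $c_n\ll d_n$. Using $e\le c\le f$ and uniform convergence, for a level $\rho$ close to $1$ and $n$ large the boolean-power description of $\ll$ coming from $(S,\ll)\cong\Sp(\Id(S),\ll|_{\Id(S)})$ gives $e\le c_n(\rho)$, $d_n(\rho)\le f$, and $c_n(\rho)\prec d_n(\rho)$; then (DV3) yields $e\prec f$, i.e.\ $e\ll f$. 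This level-set sandwich is the technical heart of the argument.

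With the triangle in hand, I would show $\Id:\cpda\to\dev$ is an equivalence by exhibiting $K:=D\circ\Sp$ as a quasi-inverse. The isomorphism $\Id\circ K=\Id\circ D\circ\Sp\cong\Id_{\PBSp}\circ\Sp\cong\mathrm{id}_{\dev}$ follows from the triangle and Theorem~\ref{thm: dev = PBSp}. For $K\circ\Id=D\circ(\Sp\circ\Id)\cong\mathrm{id}_{\cpda}$, I would use that $\Sp\circ\Id$ sends $(D,\lhd)$ to $(\spec(D),\lhd|_{\spec(D)})$ (Remark~\ref{rem: S is a functor}), that $D$ is the Dedekind completion of $\spec(D)$ (Corollary~\ref{cor: S(D) essential in D}), and that the closure of $\lhd|_{\spec(D)}$ recovers $\lhd$. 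This last point is the reverse analogue of the obstacle above but is easy: since $\spec(D)$ is uniformly dense in $D$ (Proposition~\ref{prop: S dense in A}) and $f\lhd g$ gives a reflexive $c$ with $f\le c\le g$, shifting uniform approximants of $f,g$ from $\spec(D)$ down and up by $1/n$ produces $s_n\le c\le t_n$ in $\spec(D)$ with $s_n\lhd t_n$ and $s_n\to f$, $t_n\to g$; as $\lhd$ is closed, this yields the equality. Finally, with $\Id:\cpda\to\dev$ an equivalence, the triangle gives $D\cong K\circ\Id_{\PBSp}$, a composite of equivalences, so $D:\PBSp\to\cpda$ is an equivalence as well, completing the proof.
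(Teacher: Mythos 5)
Your proposal is correct, but it takes a genuinely different route from the paper. The paper proves directly, via the criterion of \cite[Thm.~IV.4.1]{Mac71}, that each of $\Id : \cpda \to \dev$ and $D : \PBSp \to \cpda$ is full, faithful, and essentially surjective: essential surjectivity of $\Id$ via de Vries powers and Proposition~\ref{prop: lifting proximity to Dedekind completion}; fullness of $\Id$ by lifting a de Vries morphism first to $\spec(D)$ and then to $D$ (Proposition~\ref{prop: lifting proximity morphism to Dedekind completion}); essential surjectivity of $D$ via Claim~\ref{claim: lhd and ll} (that $\lhd$ is the closure of the lift $\ll$ of ${\prec} = {\lhd}|_{\Id(D)}$); and fullness of $D$ via the appendix result Theorem~\ref{prop:weak proximity morphism} --- and only afterwards verifies commutativity by showing $\Id \circ D \circ \Sp \cong \mathrm{id}_{\dev}$. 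You invert this order: you isolate the two compatibility facts (the closure of $\ll$ restricted to $\Id(S)$ equals ${\ll}|_{\Id(S)}$, and the closure of ${\lhd}|_{\spec(D)}$ recovers $\lhd$), and then deduce both equivalences formally by a two-out-of-three argument from Theorem~\ref{thm: dev = PBSp}, with $K = D \circ \Sp$ as an explicit quasi-inverse of $\Id_{\cpda}$. Your level-set sandwich, the technical heart, is correct; in fact the reflexive interpolant $c$ is dispensable: from $e \lhd f$ take $a_n \ll b_n$ with $a_n \to e$, $b_n \to f$, so that $e - \varepsilon \le a_n$ and $b_n \le f + \varepsilon$ for large $n$, and evaluating at any level $\rho \in (\varepsilon, 1-\varepsilon]$ in the boolean-power picture gives $e \le a_n(\rho) \prec b_n(\rho) \le f$, whence $e \prec f$ by (DV3). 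This supplies levelwise exactly what the paper's Claim~\ref{claim: lhd and ll} extracts via decreasing decompositions. What your route buys is economy: fullness and faithfulness of both functors come for free, and in particular you bypass the direct invocation of Theorem~\ref{prop:weak proximity morphism} in the fullness of $D$ (though the appendix lemmas remain indispensable for $D$ to be a functor at all, via Propositions~\ref{prop: lifting proximity morphism to Dedekind completion} and~\ref{thm: functor D}). What the paper's route buys is explicit morphism-level descriptions of all quasi-inverses, which are reused in Section~\ref{sec: summary}.

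Two soft spots, both fillable. First, naturality of $K \circ \Id \cong \mathrm{id}_{\cpda}$ is asserted but never checked, and it is not formal: since composition in $\cpda$ is $\star$-composition, it amounts to showing that a proximity morphism $\beta : (D, \lhd) \to (D', \lhd')$ is recovered from $\beta|_{\Id(D)}$, i.e., $\beta(r_0 + \sum_i r_i e_i) = r_0 + \sum_i r_i \beta(e_i)$ on decreasing decompositions, followed by uniform density (Corollary~\ref{cor: S dense in D}) and continuity. This is the content of Lemma~\ref{lem: technical properties}(6) adapted as in Remark~\ref{rem: S is a functor}, and is the same argument the paper deploys for faithfulness of $\Id$; your two-out-of-three scaffolding does not excuse you from it. Second, your citation of Remark~\ref{rem: S is a functor} is a forward reference: in the paper that remark follows the theorem and its morphism-level assertions draw on the theorem's own proof. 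For the object-level fact you actually need --- that ${\lhd}|_{\spec(D)}$ satisfies the proximity axioms (notably (P9) and (P10), which do not survive restriction automatically) and coincides with the unique lift of $\prec$, so that $\Sp(\Id(D,\lhd)) \cong (\spec(D), {\lhd}|_{\spec(D)})$ --- you should instead invoke \cite[Cor.~5.8]{BCMO22a} together with the decreasing-decomposition computation of Claim~\ref{claim: lhd and ll} (or your own levelwise argument run inside $D$), both of which are independent of the theorem. With these two repairs made explicit, your proof is complete and is a legitimately leaner alternative to the paper's.
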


\begin{proof}
To see that $\Id$ and $D$ are equivalences, by \cite[Thm.~IV.4.1]{Mac71} it is enough to show that $\Id$ and $D$ are full, faithful, and essentially surjective. We first consider $\Id$. Let $(B, \prec) \in \dev$. Set $D = D(\mathbb{R}[B]^\flat)$ and let $\lhd$ be the closure of $\prec^\flat$ in $D$. Since $(\mathbb{R}[B]^\flat, \prec^\flat) \in \PBSp$ by \cite[Thm.~5.11(1)]{BCMO22a},  $(D, \lhd) \in \cpda$ by Proposition~\ref{prop: lifting proximity to Dedekind completion}. By Theorem~\ref{thm: prox}(1), $\lhd$ restricts to a proximity $\prec$ on $\Id(D)$ such that if $e, f \in B$, then $ e^\flat \lhd f^\flat$ iff $e \prec f$ (see the equivalence~(\ref{eqn})).  Therefore, $(B, \prec)$ is isomorphic to $\Id(D, \lhd)$. Thus, $\Id$ is essentially surjective.

To show that $\Id$ is full, let $\sigma : \Id(D, \lhd) \to \Id(D', \lhd')$ be a de Vries morphism. The proximity $\lhd$ restricts to a proximity $\prec$ on $\Id(D)$ by Theorem~\ref{thm: prox}(1), and the same is true for $\lhd'$ and $\Id(D')$. Also, $\prec$ extends to a proximity $\ll$ on $\spec(D)$ by \cite[Cor.~5.8]{BCMO22a}, and the same is true for $\prec'$ and $\spec(D')$. Then $\sigma$ extends (uniquely) to a proximity morphism $\alpha : (\spec(D), \ll) \to (\spec(D'), \ll')$ by \cite[Cor.~6.6]{BCMO22a}.  Proposition~\ref{prop: lifting proximity morphism to Dedekind completion} shows that $\alpha$ extends to a proximity morphism $\beta : (D, \lhd) \to (D', \lhd')$ since $\spec(D)$ is uniformly dense in $D$ by Corollary~\ref{cor: S dense in D}, and the same is true for $\spec(D')$. Therefore, $\Id(\beta) = \beta|_{\Id(D)} = \sigma$. Thus, $\Id$ is a full functor.

To see that $\Id$ is faithful, let $\beta, \beta' : (D, \lhd) \to (D', \lhd')$ be proximity morphisms which agree on $\Id(D)$. Using decreasing decompositions, it follows from Lemma~\ref{lem: technical properties}(6) that $\beta, \beta'$ agree on the Specker subalgebra $S$ of $D$. Since $\beta, \beta'$ are continuous and $S$ is uniformly dense in $D$ by Corollary~\ref{cor: S dense in D}, we see that $\beta = \beta'$. Therefore, $\Id$ is faithful, hence $\Id$ is an equivalence.

Next, we consider $D$. To see it is essentially surjective, let $(D, \lhd) \in \cpda$. Set $B = \Id(D)$ and $S$ to be the Specker subalgebra of $D$. Then $\Id(S) = B$. We have that $D$ is Baer by Lemma~\ref{lem: Dedekind is Baer}, and so $B$ is complete by Remark~\ref{rem: complete idempotents}. Moreover, $\lhd$ restricts to a de Vries proximity $\prec$ on $B$ by Theorem~\ref{thm: prox}(1). In addition, $\prec$ lifts to a proximity $\ll$ on $S$ by \cite[Cor.~5.8]{BCMO22a}. We claim that $\lhd$ is the closure of $\ll$. Since $D = D(S)$ by Corollary~\ref{cor: S(D) essential in D}, this will yield that $(D, \lhd) = D(S, \ll)$. To see this, let $s, t \in S$. Write $s = r_0 + \sum_{i=1}^n r_i e_i$ and $t = r_0 + \sum_{i=1}^n r_i f_i$ in compatible decreasing form, and set $p_i = r_0 + \cdots + r_i$ for $1 \le i \le n$ as in Lemma~\ref{lem: technical properties}(3).

\begin{claim} \label{claim: lhd and ll}
$s \lhd t$ iff $s \ll t$ iff $e_i \prec f_i$ for each $i$. 
\end{claim}

\begin{proofclaim}
Let $s \lhd t$. Then $[(s-p_{i-1}) \wedge r_i] \vee 0 \lhd [(t-p_{i-1}) \wedge r_i] \vee 0$ for each $i$. Therefore, $r_i e_i \lhd r_i f_i$ by Lemma~\ref{lem: technical properties}(2). Since $r_i > 0$,  we conclude that $e_i \lhd f_i$ for $i \ge 1$. Because $\prec$ is the restriction of $\lhd$ to $\Id(D)$, we have $e_i \prec f_i$. A similar argument yields that $s \ll t$ implies $e_i \prec f_i$ for each $i$. The converse implications are easy to see by applying (P1), (P6), and (P7).
\end{proofclaim}

Thus, $\lhd$ restricts to $\ll$ on $S$. Since $\lhd$ is a closed proximity, the closure $\lhd'$ of $\ll$ is contained in $\lhd$. Let $d,e \in D$ with $d \lhd e$. Since $S$ is uniformly dense in $D$, we may write $d = \lim a_n$ and $e = \lim b_n$ for some sequences $\{a_n\}, \{b_n\}$ in $S$. By \cite[Lem~3.16(1)]{BMO20c}, we may assume that $a_n \le d$ and $e \le b_n$ for each $n$.
This yields $a_n \le d \lhd e \le b_n$, so $a_n \lhd b_n$. Therefore, $a_n \ll b_n$ for each $n$, and so $d \lhd' e$. Consequently, $\lhd$ is equal to the closure of $\ll$. Thus, $D$ is essentially surjective.

To see that $D$ is full, let $\beta : (D(S), \ll) \to (D(S'), \ll')$ be a proximity morphism. We show that $\alpha := \beta|_{S}$ is a function from $S$ to $S'$. First, $\beta|_{\Id(D(S))} : \Id(D(S)) \to \Id(D(S'))$ is a de Vries morphism by Theorem~\ref{thm: prox}(2). Next, let $s \in S$, and write $s = a_0 + \sum_i b_i e_i$ in decreasing form. Then $\beta(s) = a_0 + \sum_i b_i \beta(e_i)$ by Lemma~\ref{lem: technical properties}, so $\beta(s) \in S'$ since each $\beta(e_i) \in \Id(D(S')) = \Id(S')$. Therefore, $\alpha$ is a well-defined function. To show that $\alpha$ is a proximity morphism, by Proposition~\ref{prop:weak proximity morphism}, it suffices to show that $\alpha$ is a weak proximity morphism. All the axioms except (PM4) are straightforward to see. To verify (PM4), let $b \in S$. Then $\alpha(b) = \bigvee \{ \beta(c) : c \in D(S), c \lhd b\}$. Let $c \in D(S)$ with $c \lhd b$. There is a sequence $\{a_n\}$ in $S$ with $a_n \le c$ such that $\lim a_n = c$. Since $\beta$ is continuous, $\lim \alpha(a_n) = \beta(c)$, and therefore $\bigvee \alpha(a_n) = \beta(c)$ by \cite[Lem.~3.16(2)]{BMO20c}. Consequently, $\alpha(b) = \bigvee \{ \alpha(a) : a \in S, a \ll b\}$, which verifies (PM4).

To see that $D$ is faithful, let $\alpha, \alpha' : (S, \ll) \to (S', \ll')$ be proximity morphisms with $D(\alpha) = D(\alpha')$. Since $D(\alpha)$ extends $\alpha$ and $D(\alpha')$ extends $\alpha'$, we have that $\alpha = \alpha'$. Therefore, $D$ is faithful. Thus, $D$ is an equivalence.

Finally, to see that the diagram commutes up to natural isomorphism, we show that $\Id \circ D \circ \Sp$ is naturally equivalent to the identity functor on $\dev$. Let $(B, \prec) \in \dev$. Then 
\[
D(\Sp(B, \prec)) = D(\mathbb{R}[B]^\flat, \prec^\flat) = (D(\mathbb{R}[B]^\flat), \lhd),
\] 
where $\lhd$ is the closure of $\prec^\flat$. The functor $\Id$ then sends this to $(\Id(D(\mathbb{R}[B]^\flat)), \prec')$, where $\prec'$ is the restriction of $\lhd$ to $\Id(D(\mathbb{R}[B]^\flat))$. As seen above, the boolean isomorphism $\tau_B :  B \to \Id(D(\mathbb{R}[B]^\flat))$ (see \cite[Rem.~4.10]{BCMO22a}) satisfies $e \prec f$ iff $e^\flat \prec^\flat f^\flat$, iff $e \lhd f$. The proof of \cite[Thm.~6.9]{BCMO22a} shows that $\tau$ is then a natural isomorphism between the identity functor and $\Id \circ D \circ \Sp$.
\end{proof}

As mentioned in Remark~\ref{rem: S is a functor}, we finish the section by showing that $\spec : \pda \to \PBSp$ is a functor naturally isomorphic to $\Sp \circ \Id$.

\begin{remark} \label{rem: S is a functor}
We define $\spec$ by sending $(D, \lhd) \in \pda$ to $(\spec(D), {\lhd}|_{\spec(D)})$ and a proximity morphism $\alpha : (D, \lhd) \to (D', \lhd')$ to $\alpha|_{\spec(D)}$. Set ${\prec} = {\lhd}|_{\Id(D)}$ and $\ll$ to be the lift of $\prec$ to $\spec(D)$. By Claim~\ref{claim: lhd and ll}, $\ll$ is the restriction of $\lhd$ to $\spec(D)$, and hence $(\spec(D), {\lhd}_{\spec(D)}) \in \PBSp$. 

Let $\alpha : (D, \lhd) \to (D', \lhd')$ be a proximity morphism. By Theorem~\ref{thm: prox}(2), $\alpha$ sends idempotents to idempotents. If $s \in \spec(D)$, then we can write $s = r_0 + \sum_i r_i e_i$ in decreasing form as discussed in the appendix.
 It then follows from the proof of \cite[Lem.~6.4(2)]{BCMO22a} that $\alpha(s) = r_0 + \sum_i r_i\alpha(e_i)$. Thus, $\alpha|_{\spec(D)}$ is a well-defined function. The axioms (PM1)-(PM3) are straightforward, and the argument to show that $D$ is full in the proof of Theorem~\ref{thm: Id and D are equivalences} yields that $\alpha|_{\spec(D)}$ satisfies (PM4). The same argument can be used to show that $\spec$ preserves composition. Consequently, $\spec$ is a covariant functor. 

By \cite[Prop.~4.11]{BCMO22a}, there is an $\ell$-algebra isomorphism $(-)^\flat : \spec(D) \to \mathbb{R}[\Id(D)]^\flat$. Furthermore, by \cite[Thm.~5.11]{BCMO22a}, if $s, t \in \spec(D)$, then $s \ll t$ iff $s^\flat \prec^\flat t^\flat$. Therefore, from \cite[Lem.~8.3]{BMMO15b} we have that $(-)^\flat : (\spec(D), \ll) \to (\Sp\Id(D), \prec^\flat)$ is a proximity isomorphism.
If we define $\rho : \spec \to \Sp \circ \Id$ by setting $\rho_{(D, \lhd)}$ to be the $\ell$-algebra isomorphism $(-)^\flat : \spec(D) \to \mathbb{R}[\Id(D)]^\flat$ for each $(D, \lhd) \in \pda$, then a straightforward argument shows that $\rho$ is a natural transformation, and hence it is a natural isomorphism since each $\rho_{(D, \lhd)}$ is an isomorphism.
\end{remark}

\section{Putting everything together} \label{sec: summary}

In this final section we summarize our main results. We have given direct choice-free proofs of the following equivalences:

\begin{enumerate}
\item $\ubal$ is equivalent to $\cpda$ (Theorem~\ref{thm: ubal = KT}).
\item $\cpda$ is equivalent to $\dev$ (Theorem~\ref{thm: Id and D are equivalences}).
\item $\cpda$ is equivalent to $\PBSp$ (Theorem~\ref{thm: Id and D are equivalences}).
\end{enumerate}

Thus, we arrive at the following diagram.

\[
\begin{tikzcd}
\ubal \arrow[dd, "\ann"'] && \cpda  \arrow[ll, "\dv{R}"'] \\
& \KHaus \arrow[ul, "C"'] \arrow[ur, "N"] \arrow[dl, "\RO"] \arrow[dr, "FN"'] & \\
\dev \arrow[rr, "\Sp"'] && \PBSp \arrow[uu, "D"']
\end{tikzcd}
\]
We conclude by showing that the diagram commutes (up to natural isomorphism).  

To see that the outside diagram commutes, let $(D, \lhd) \in \cpda$ and $A = \dv{R}(D, \lhd)$. Then ${(\ann(A), \prec)} \cong (\Id(D), \prec)$ by Theorem~\ref{thm: prox}(1). Therefore, $\Sp(\ann(A))$ is isomorphic to the Specker subalgebra  $\spec(D)$ of $D$, and hence $D(\Sp(\ann(A)) \cong D$ by Corollary~\ref{cor: S(D) essential in D}. Moreover, the unique lift of the proximity $\prec$ on $\Id(D)$ to a proximity $\ll$ on $\spec(D)$ is equal to $\lhd|_{\spec(D)}$ by Remark~\ref{rem: S is a functor}. The closure of $\ll$ is a KT-proximity on $D$ by Proposition~\ref{prop: lifting proximity to Dedekind completion}. Since the closure of $\ll$ and $\lhd$ restrict to $\prec$ on $\Id(D)$, they are equal by Theorem~\ref{thm: Id and D are equivalences}. Thus, the outside diagram commutes.

To see that the inside of the diagram commutes, let $X \in \KHaus$. Then the corresponding de Vries algebra is $(\RO(X), \prec)$, where $\prec$ is given by $U \prec V$ iff $\cl(U) \subseteq V$ for each $U, V \in \RO(X)$. The corresponding Baer-Specker algebra is ${(FN(X), \ll)} \in \PBSp$, where $\ll$ is the unique lift of $\prec$ when we identify $\RO(X)$ with $\Id(FN(X))$. The corresponding $\ubal$-algebra is $C(X)$, and the corresponding proximity Dedekind algebra is $(N(X), \lhd) \in \cpda$, where $\lhd$ on $N(X)$ is given by $f \lhd g$ iff there is $c \in C(X)$ with $f \le c \le g$. Furthermore, $\ll$ is the restriction of $\lhd$ to $FN(X)$. Remark~\ref{rem: ann = RO} shows that $\ann\circ C \cong \RO$. The Kat\v{e}tov-Tong theorem shows that $\dv{R} \circ N = C$. By Proposition~\ref{thm: functor D}, $D \circ FN \cong N$ since $FN(X)$ is the Specker subalgebra of $N(X)$, so $N(X) \cong D(FN(X))$ by Corollary~\ref{cor: S(D) essential in D}. That $\Sp \circ \RO \cong FN$ follows from Remark~\ref{rem: FN = Sp Ro}.

\appendix

\section*{Appendix: Weak proximity morphisms}
\renewcommand{\thetheorem}{A.\arabic{theorem}}
\setcounter{theorem}{0}

As promised in Remark~\ref{rem: wpm=pm}(1), we  prove that a weak proximity morphism between proximity Baer-Specker algebras is always a proximity morphism. This is utilized in Theorem~\ref{thm: Id and D are equivalences}, which is one of our main results. Our proof that each weak proximity morphism is a proximity morphism requires a series of technical lemmas.

\begin{lemma} \label{lem: inequality}
Let $\alpha : (S, \ll) \to (S', \ll')$ be a weak proximity morphism between proximity Baer-Specker algebras and $a, b \in S$.
\begin{enumerate}[$(1)$]
\item $\alpha(a) + \alpha(b) \le \alpha(a + b)$.
\item If $0 \le a, b$, then $\alpha(a) \alpha(b) \le \alpha(a b)$.
\end{enumerate}
\end{lemma}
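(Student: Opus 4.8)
The plan is to reduce both inequalities to the behavior of $\alpha$ on pairwise orthogonal (scaled) idempotents, where the weak axioms pin $\alpha$ down, and to exploit one clean monotonicity fact about orthogonality throughout. First I would record the consequences of the axioms that cost nothing: from (PM2) the map $\alpha$ is order preserving, it sends idempotents to idempotents, and it \emph{preserves orthogonality} of idempotents, since $\alpha(e)\wedge\alpha(f)=\alpha(e\wedge f)$. The engine of the argument is the following orthogonal superadditivity: if $p,q\in S$ with $p,q\ge 0$ and $p\wedge q=0$, then $p+q=p\vee q$ by the standard $\ell$-group identity, while $\alpha(p)\wedge\alpha(q)=\alpha(p\wedge q)=\alpha(0)=0$ together with $\alpha(p),\alpha(q)\ge 0$ forces $\alpha(p)+\alpha(q)=\alpha(p)\vee\alpha(q)$; since $p+q=p\vee q\ge p,q$, order preservation gives $\alpha(p+q)\ge\alpha(p)\vee\alpha(q)=\alpha(p)+\alpha(q)$. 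The analogous multiplicative statement for part~(2) is even cleaner, because on idempotents the product is the meet, so $\alpha(ef)=\alpha(e\wedge f)=\alpha(e)\wedge\alpha(f)=\alpha(e)\alpha(f)$ \emph{exactly}.

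For part~(1) I would first strip constants: choosing $t\in\mathbb R$ with $a+t,b+t\ge 0$, axiom (PM7$'$) makes $\alpha(a)+\alpha(b)\le\alpha(a+b)$ equivalent to the same inequality for $a+t,b+t$, so I may assume $a,b\ge 0$. Using the orthogonal decompositions of $a$ and $b$ (\cite[Lem.~2.1]{BMMO15a}) and refining to a common partition of unity, I would write $a=\sum_k\rho_k g_k$ and $b=\sum_k\tau_k g_k$ with the $g_k$ pairwise orthogonal idempotents and $\rho_k,\tau_k\ge 0$, so that $a+b=\sum_k(\rho_k+\tau_k)g_k$. Within each ``column'' the scaled idempotents are orthogonal and (PM5) handles the positive scalars, so the orthogonal lemma assembles the relevant bounds on $\alpha(a)$, $\alpha(b)$, and $\alpha(a+b)$.

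The main obstacle is precisely the passage from these orthogonal pieces to the non-orthogonal sum: $\alpha$ is only \emph{super}additive, not additive, across the partition -- it may ``fill gaps'' when $\bigvee_k\alpha(g_k)\ne 1$ -- so one cannot simply compute $\alpha(a)=\sum_k\rho_k\alpha(g_k)$, and the naive columnwise bounds point the wrong way. To control this I would invoke lower continuity: by (PM4), and since translation preserves joins in an $\ell$-group, $\alpha(a)+\alpha(b)=\bigvee\{\alpha(w)+\alpha(w'):w\ll a,\ w'\ll b\}$, so it suffices that $\alpha(a+b)$ dominates each $\alpha(w)+\alpha(w')$. As $w+w'\ll a+b$ (P6) and $\alpha$ is order preserving, this reduces to $\alpha(w)+\alpha(w')\le\alpha(w+w')$ for elements strictly inside $a$ and $b$, which is where the orthogonal lemma together with (PM3) must be brought to bear after a common refinement of $w$ and $w'$. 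Turning the order-theoretic supremum of (PM4) into the required uniform upper bound, while taming the cross terms between $w$ and $w'$, is the delicate heart of the argument.

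Part~(2) then follows along the same lines: the hypothesis already gives $a,b\ge 0$, hence $\alpha(a),\alpha(b)\ge 0$; using orthogonal decompositions $a=\sum_i r_ie_i$ and $b=\sum_j s_jf_j$ with $r_i,s_j\ge 0$ and the refinement $ab=\sum_{i,j}r_is_j\,(e_i\wedge f_j)$, the exact multiplicativity of $\alpha$ on idempotents and (PM5) for the scalars reduce the product inequality to the same orthogonal assembly used for the sum. Thus (2) is no harder than (1) once the gap-filling interaction above is handled.
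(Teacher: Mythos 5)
Your proposal contains a genuine gap, and you have in fact flagged it yourself: the ``delicate heart'' is never carried out, and the reduction you propose for it is circular. Your orthogonal superadditivity lemma is correct as far as it goes, but, as you observe, it only yields \emph{lower} bounds: $\alpha(a) \ge \sum_k \rho_k\alpha(g_k)$, $\alpha(b) \ge \sum_k \tau_k\alpha(g_k)$, and $\alpha(a+b) \ge \sum_k(\rho_k+\tau_k)\alpha(g_k)$, and no combination of these gives $\alpha(a)+\alpha(b)\le\alpha(a+b)$, precisely because $\alpha$ is not additive across an orthogonal partition (it may ``fill gaps,'' as you say, since (PM2) gives no control of joins of idempotents). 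Your attempted repair via (PM4) then reduces the claim to $\alpha(w)+\alpha(w')\le\alpha(w+w')$ for $w\ll a$, $w'\ll b$ --- but this is the very inequality being proved, now for the pair $(w,w')$, and the hypotheses $w\ll a$, $w'\ll b$ supply no structural leverage on $w,w'$ themselves (they are again arbitrary elements of $S$). Nothing in the proposal indicates how (PM3) would break this circle; indeed the dependency runs the other way in the paper: the inequality $\alpha(a+b)\le\alpha(a)+\alpha(c)$ for $b\ll c$ (Lemma~\ref{lem: join and proximity morphism}(2)) is \emph{deduced from} Lemma~\ref{lem: inequality}(1) together with (PM3), so approximation arguments of the kind you sketch presuppose the statement rather than prove it. Part~(2) inherits the same unfilled hole, since your ``same orthogonal assembly'' is again blocked by the failure of $\alpha(a)=\sum_i r_i\alpha(e_i)$ for orthogonal decompositions.

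For contrast, the paper sidesteps all decomposition bookkeeping: by \cite[Prop.~5.1]{BMMO15b} the restriction $\sigma=\alpha|_{\Id(S)}$ is a de Vries morphism, and by \cite[Cor.~6.6]{BCMO22a} the square with the $\bal$-isomorphisms $(-)^\flat : S \to \mathbb{R}[\Id(S)]^\flat$ commutes, so it suffices to prove the inequalities for $\sigma^\flat$. In $\mathbb{R}[B]^\flat$ the sum and product have explicit join-of-meets formulas, e.g.\ $(a+b)(r)=\bigvee\{a(s)\wedge b(t): s+t\ge r\}$, whence
\[
(\sigma^\flat(a)+\sigma^\flat(b))(r)=\bigvee\{\sigma(a(s))\wedge\sigma(b(t)) : s+t\ge r\}\le\sigma\Bigl(\bigvee\{a(s)\wedge b(t) : s+t\ge r\}\Bigr)=\sigma^\flat(a+b)(r),
\]
using only (M2) and monotonicity of $\sigma$. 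Note that the decreasing functions $a:\mathbb{R}\to B$ are exactly the level-set data your common refinement was trying to track; what your approach is missing is the formula expressing the decomposition of $a+b$ (and $ab$) as joins of meets of the level idempotents of $a$ and $b$, which is what makes the one-sided estimate fall out of meet-preservation alone. Without that (or an equivalent computation), your argument does not close.
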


\begin{proof}
The proofs of (1) and (2) are similar, and we only prove (1). By \cite[Prop.~5.1]{BMMO15b}, the restrictions of $\ll$ and $\ll'$ to idempotents are de Vries proximities, $\sigma = \alpha|_{\Id(S)} : (\Id(S), \prec) \to (\Id(S'), \prec')$ is a de Vries morphism, and we have the following commutative diagram by \cite[Cor.~6.6]{BCMO22a}, where the vertical maps are $\bal$-isomorphisms. 
\[
\begin{tikzcd}[column sep = 5pc]
S \arrow[r, "\alpha"] \arrow[d, "(-)^\flat"'] & S' \arrow[d, "(-)^\flat"] \\
\mathbb{R}[\Id(S)]^\flat \arrow[r, "\sigma^\flat"'] & \mathbb{R}[\Id(S')]^\flat
\end{tikzcd}
\]
It then suffices to show that the inequality in (1) holds for $\sigma^\flat$. For this, let $a, b \in \mathbb{R}[\Id(S)]^\flat$.  Recalling the operations on $\mathbb{R}[\Id(S)]^\flat$ given after Definition~\ref{star definition}, if $r \in \mathbb{R}$, then
\begin{align*}
(\sigma^\flat(a) + \sigma^\flat(b))(r) &= \bigvee \{ \sigma^\flat(a)(s) \wedge \sigma^\flat(b)(t) : s + t \ge r \} \\
&= \bigvee \{ \sigma(a(s)) \wedge \sigma(b(t)) : s + t \ge r \}
\end{align*}
and
\begin{align*}
(\sigma^\flat(a + b))(r) &= \sigma((a+b)(r)) = \sigma\left(\bigvee \{ a(s) \wedge b(t) : s + t \ge r\} \right) \\
&\ge \bigvee \{ \sigma(a(s) \wedge b(t)) : s + t \ge r \} \\
&= \bigvee \{ \sigma(a(s)) \wedge \sigma(b(t)) : s + t \ge r \}.
\end{align*}
Thus, since $ (\sigma^\flat(a) + \sigma^\flat(b))(r) \le \sigma^\flat(a+b)(r)$ for each $r \in \mathbb{R}$, we have that $ \sigma^\flat(a) + \sigma^\flat(b) \le \sigma^\flat(a + b)$.
\end{proof}

\begin{lemma} \label{lem: de Vries fact} 
Let $\sigma : (B, \prec) \to (B', \prec')$ be a de Vries morphism between de Vries algebras. If $e, f, g \in B$ with $f \prec g$, then $\sigma(e \vee f) \le \sigma(e) \vee \sigma(g)$.
\end{lemma}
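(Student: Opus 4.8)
The plan is to rephrase the target inequality $\sigma(e \vee f) \le \sigma(e) \vee \sigma(g)$ in its complemented (disjointness) form. Since $B'$ is a boolean algebra, this inequality is equivalent to $\sigma(e \vee f) \wedge \sigma(e)^* \wedge \sigma(g)^* = 0$. The main obstacle to keep in mind is that a de Vries morphism need \emph{not} preserve joins, so one cannot simply bound $\sigma(e \vee f)$ by $\sigma(e) \vee \sigma(f)$; the hypothesis $f \prec g$ has to be used in an essential way, and the trick is to transfer it to the complements via (M3).

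First I would record two elementary consequences of the morphism axioms. From (M2), $\sigma$ is order preserving, since $a \le b$ gives $\sigma(a) = \sigma(a \wedge b) = \sigma(a) \wedge \sigma(b)$. The key step is then to feed the complemented relation into (M3): from $f \prec g$ and (DV5) we get $g^* \prec f^*$, and applying (M3) to $g^* \prec f^*$ yields $\sigma((g^*)^*)^* \prec' \sigma(f^*)$. Because $B$ is boolean, $(g^*)^* = g$, so this reads $\sigma(g)^* \prec' \sigma(f^*)$, and in particular $\sigma(g)^* \le \sigma(f^*)$ by (DV2). This is precisely the inequality that lets the gap hypothesis do its work.

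With this in hand the computation is short. Using $\sigma(g)^* \le \sigma(f^*)$ I would estimate
\[
\sigma(e \vee f) \wedge \sigma(e)^* \wedge \sigma(g)^* \le \sigma(e \vee f) \wedge \sigma(f^*) \wedge \sigma(e)^*.
\]
Then I apply (M2) together with the boolean identity $(e \vee f) \wedge f^* = e \wedge f^*$ to rewrite $\sigma(e \vee f) \wedge \sigma(f^*) = \sigma\big((e \vee f) \wedge f^*\big) = \sigma(e \wedge f^*)$, which is $\le \sigma(e)$ by order preservation. Substituting gives an upper bound of $\sigma(e) \wedge \sigma(e)^* = 0$, so the triple meet vanishes and the desired inequality follows from $x \wedge y^* \wedge z^* = 0 \iff x \le y \vee z$.

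I expect the only subtle point to be the correct bookkeeping of complements when invoking (M3): one must apply it to $g^* \prec f^*$ rather than to $f \prec g$ directly, and use $(g^*)^* = g$ to land on $\sigma(g)^* \le \sigma(f^*)$. Everything else is routine boolean algebra together with the order preservation of $\sigma$, so no interpolation (DV6) or the defining property (M4) should be needed.
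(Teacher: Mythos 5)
Your proof is correct and follows essentially the same route as the paper: both hinge on deriving $\sigma(g)^* \le \sigma(f^*)$ from (M3) and then applying (M2) with the boolean identity $(e \vee f) \wedge f^* = e \wedge f^*$ and order preservation. The only cosmetic differences are that you pass through (DV5) before invoking (M3) (the paper applies (M3) directly to $f \prec g$ and takes complements afterward) and that you phrase the final estimate as a vanishing triple meet rather than the equivalent direct bound $\sigma(e \vee f) \wedge \sigma(g)^* \le \sigma(e)$.
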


\begin{proof}
Since $f \prec g$ we have $\sigma(f^*)^* \prec \sigma(g)$, so $\sigma(f^*)^* \le \sigma(g)$. This yields $\sigma(g)^* \le \sigma(f^*)$. Therefore,
\begin{align*}
\sigma(e \vee f) \wedge \sigma(g)^* &\le \sigma(e \vee f) \wedge \sigma(f^*) = \sigma((e \vee f) \wedge f^*)
= \sigma(e \wedge f^*) \le \sigma(e).
\end{align*}
From this it follows that $\sigma(e \vee f) \le \sigma(e) \vee \sigma(g)$.
\end{proof}

By \cite[Sec.~5]{BMMO15b}, from an orthogonal decomposition $a = \sum_{i=0}^n r_i e_i$ we can obtain a decreasing decomposition as follows. Without loss of generality we may assume that $r_0 \le \cdots \le r_n$. Then we can write
\[
a = r_0 (e_0 + \cdots + e_n) + (r_1-r_0)(e_1 + \cdots + e_n) + \cdots + (r_n - r_{n-1})e_n.
 \]
Therefore, $a = \sum_{i=0}^n p_if_i$, where $p_0 = r_0$,  $p_i=r_i-r_{i-1}$ for $i\ge 1$, and $f_i=\sum_{j=i}^n e_j=\bigvee_{j=i}^n e_j$ (the latter equality follows from \cite[Eqn.~XIII.3(14)]{Bir79}). This exhibits $a$ as a linear combination of a sequence of decreasing idempotents. Moreover, by eliminating coefficients that are 0, we may assume that all the coefficients are nonzero and all of them except possibly $p_0$ are positive. Furthermore, if $a=\sum_{i=0}^n r_i e_i$ is a full orthogonal decomposition of $a$, then $f_0 = 1$. In this case we will write the corresponding decreasing decomposition as $a = p_0 + \sum_{i=1}^n p_i f_i$.

In order to prove Lemmas~\ref{lem: reflexive invertible elements} and \ref{rem: some properties}, we require the following result.

\begin{lemma} \label{lem: technical properties}
Let $S$ be a Specker algebra.
\begin{enumerate}[$(1)$]
\item \cite[Lem.~4.9(5)]{BMMO15b} If $0 \ne e \in \Id(S)$ and $r \in \mathbb{R}$ with $re \ge 0$, then $r \ge 0$.

\item \cite[Lem.~4.9(6)]{BMMO15b} Let $0 \ne e, f \in \Id(S)$ and $0 < r, p \in \mathbb{R}$. Then $re \le pf$ iff $r \le p$ and $e \le f$.

\item \cite[Lem.~5.4(1)]{BCMO22a} Let $a \in S$. If $r, p \in \mathbb{R}$ with $r < p$, then $(a \wedge p) - (a \wedge r) = [(a - r) \wedge (p - r)] \vee 0$.

\item \cite[Lem.~5.4(1)]{BCMO22a} Let $a \in S$ with $a = r_0 + \sum_{i=1}^n r_i e_i$ in decreasing form. Set $p_i = r_0 + \cdots + r_i$ for $1 \le i \le n$. Then $[(a-p_{i-1}) \wedge r_i] \vee 0 =r_i e_i$.

\item \cite[Lem.~5.4(2)]{BCMO22a} Let $a, b\in S$. Then there exist $r_0 < \cdots < r_n$ in $\mathbb R$ with $r_0\le s,t\le r_n$ such that $a$ and $b$ have decreasing decompositions $a = r_0 + \sum_{i=1}^n (r_i-r_{i-1})e_i$ and $b = r_0 + \sum_{i=1}^n (r_i-r_{i-1})f_i$. Moreover, if $a, b \ge 0$, then we may assume $r_0 = 0$.

\item \cite[Lem.~6.4(2)]{BCMO22a} 
Suppose $\alpha : (S, \ll) \to (S', \ll')$ is a weak proximity morphism between proximity Baer-Specker algebras.  If $a  = r_0 + \sum_i r_i e_i$ is in decreasing form, then $\alpha(a) = r_0 + \sum_i r_i \alpha(e_i)$. 
\end{enumerate}
\end{lemma}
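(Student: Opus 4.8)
Every one of the six assertions is quoted from previously published work, so at the most economical level the proof consists in invoking the indicated references: parts (1) and (2) from \cite[Lem.~4.9]{BMMO15b}, parts (3)--(5) from \cite[Lem.~5.4]{BCMO22a}, and part (6) from \cite[Lem.~6.4(2)]{BCMO22a}. Nevertheless, to keep the appendix self-contained I would indicate the direct arguments, which for (1)--(5) are elementary facts about Specker algebras and lattice-ordered rings, and only genuinely depend on the morphism in (6).

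For (1), recall that every idempotent satisfies $e = e^2 \ge 0$, so a nonzero idempotent is strictly positive. If $r < 0$, then $re \le 0$; combined with the hypothesis $re \ge 0$ this forces $re = 0$, and since $r$ is a nonzero real we may cancel it to get $e = 0$, a contradiction. Hence $r \ge 0$. For (2), the reverse implication is immediate from $re \le pe \le pf$, using $e \le f$, $r \le p$, and $e, p > 0$. For the forward implication I would multiply $re \le pf$ by the nonnegative idempotents $e$ and $e(1-f) = e \wedge f^*$: the first yields $re \le p(e \wedge f) \le pe$, whence $(r-p)e \le 0$ and therefore $r \le p$ (else cancel $e$); the second yields $r\,e(1-f) \le pf(1-f) = 0$, and as $e(1-f)$ is a nonnegative idempotent and $r > 0$ this forces $e(1-f) = 0$, i.e.\ $e \le f$.

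Assertion (3) is a lattice-ordered-group identity that I would verify after the substitution $b = a - r$: translation invariance turns the left-hand side into $(b \wedge (p-r)) - (b \wedge 0)$, and setting $c = b \wedge (p-r)$ the claim becomes $c - (b \wedge 0) = c \vee 0$, which one checks on the totally ordered case (the cases $b \ge 0$ and $b < 0$ separately) and hence holds in every $f$-ring. Part (4) then follows by telescoping: using (3) with $p = p_i$ and $r = p_{i-1}$ (so that $p_i - p_{i-1} = r_i$) identifies $[(a - p_{i-1}) \wedge r_i] \vee 0$ with the difference $(a \wedge p_i) - (a \wedge p_{i-1})$, which the decreasing decomposition evaluates to $r_i e_i$ since $a$ takes the discrete values $p_0 \le \cdots \le p_n$ on the orthogonal pieces $e_i - e_{i+1}$. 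Part (5) is a common-refinement argument: passing to the finite boolean subalgebra of $\Id(S)$ generated by the idempotents occurring in $a$ and $b$ and aligning their real thresholds produces a shared increasing sequence $r_0 < \cdots < r_n$ realizing both decreasing decompositions simultaneously, with $r_0 = 0$ attainable when $a, b \ge 0$.

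The one genuinely nontrivial part is (6), which I expect to be the main obstacle since it is the only assertion involving the morphism rather than a single algebra. The plan is to transport the computation through the de Vries power isomorphism $(-)^\flat$ exactly as in the proof of Lemma~\ref{lem: inequality}: under the commuting square relating $\alpha$ to $\sigma^\flat$, where $\sigma = \alpha|_{\Id(S)}$ is the induced de Vries morphism, additivity of $\alpha$ along a decreasing decomposition reduces to the behaviour of $\sigma^\flat$ on the corresponding step functions in $\mathbb{R}[\Id(S)]^\flat$. Combining (PM5) and (PM7$'$) with the formulas for scalar multiplication and the constant shift then yields $\alpha(a) = r_0 + \sum_i r_i \alpha(e_i)$ termwise. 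The delicacy here is that $\alpha$ is only a \emph{weak} proximity morphism, so genuine additivity (PM7) is unavailable; one must work with the weakened axioms (PM6$'$)--(PM7$'$) together with the order estimates of Lemma~\ref{lem: inequality}, which is precisely why this step is isolated from the purely algebraic identities (1)--(5).
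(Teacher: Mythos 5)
Your minimal proof---invoke the cited sources---coincides exactly with the paper's treatment: Lemma~\ref{lem: technical properties} is stated with attributions to \cite[Lem.~4.9]{BMMO15b} and \cite[Lem.~5.4, Lem.~6.4(2)]{BCMO22a}, and no argument is given in the paper, so at that level there is nothing to compare. Your supplementary sketches for (1)--(5) are essentially sound: (1) and (2) need only that idempotents are squares, hence nonnegative in an $f$-ring, plus cancellation of nonzero real scalars; (4) is the telescoping you describe, modulo the routine observation that meets with constants act componentwise on a full orthogonal decomposition; (5) is the standard common-refinement argument. One stylistic caveat on (3): verifying the identity ``in the totally ordered case'' and transferring it to all $f$-rings invokes the subdirect representation, which uses the axiom of choice that this paper is at pains to avoid; the identity has a one-line choice-free proof via translation invariance, $c - (c \wedge 0) = c + \bigl((-c) \vee 0\bigr) = (c - c) \vee c = c \vee 0$ applied after your substitution $b = a - r$.

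Part (6) is where your sketch has a genuine gap: the proposed route is circular. The commuting square of \cite[Cor.~6.6]{BCMO22a} used in Lemma~\ref{lem: inequality}---that $\alpha$ is conjugate to $\sigma^\flat$ under the isomorphisms $(-)^\flat$---is, when unwound on an element $a = r_0 + \sum_i r_i e_i$ in decreasing form, literally the statement $\alpha(a)^\flat = \bigl(r_0 + \sum_i r_i \alpha(e_i)\bigr)^\flat$, since $\sigma^\flat(a^\flat) = \sigma \circ a^\flat$ is the step function of $r_0 + \sum_i r_i \sigma(e_i)$. So transporting through that square presupposes (6); indeed the corollary in \cite{BCMO22a} comes after Lem.~6.4 and depends on it, as the present paper's Remark~\ref{rem: S is a functor} also indicates. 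Your fallback, combining (PM5) and (PM7$'$) ``termwise,'' cannot close the gap either: a weak proximity morphism carries no additivity across non-constant summands, and splitting $\alpha(r_1 e_1 + r_2 e_2)$ is exactly what is to be proved. The non-circular argument uses your own parts (3) and (4): by (4), $r_i e_i = [(a - p_{i-1}) \wedge r_i] \vee 0$ is manufactured from $a$ using only a shift by a constant, a meet with a constant, and a join with $0$, all of which $\alpha$ preserves by (PM7$'$), (PM2) (together with $\alpha(r_i) = r_i$, which follows from (PM1) and (PM7$'$)), and (PM6$'$); combined with (PM5) this gives $r_i \alpha(e_i) = [(\alpha(a) - p_{i-1}) \wedge r_i] \vee 0$. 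Summing over $i$, applying (3) in $S'$ and telescoping, and using $p_0 \le \alpha(a) \le p_n$ (as $\alpha$ is order preserving and fixes constants) yields $r_0 + \sum_i r_i \alpha(e_i) = r_0 + \bigl(\alpha(a) \wedge p_n\bigr) - \bigl(\alpha(a) \wedge p_0\bigr) = \alpha(a)$.
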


\begin{lemma} \label{lem: reflexive invertible elements}
Suppose $\alpha : (S, \ll) \to (S', \ll')$ is a weak proximity morphism between proximity Baer-Specker algebras. Let $0 \le c \in S$.
\begin{enumerate}[$(1)$]
\item $c$ is invertible iff there is $0 < r \in \mathbb{R}$ with $r \le c$.
\item If $0 \le b \ll c$ and $b$ is invertible, then $\alpha(c)$ is invertible and $\alpha(c)^{-1} \le \alpha(b^{-1})$.
\end{enumerate}
\end{lemma}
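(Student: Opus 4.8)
For (1) my plan is to argue entirely with orthogonal decompositions. Write $c=\sum_{i} r_ie_i$ in full orthogonal form, so the $e_i$ are pairwise orthogonal idempotents with $\bigvee_i e_i=1$. For the reverse implication, if $0<r\le c$ then $c-r\cdot 1=\sum_i(r_i-r)e_i\ge 0$; multiplying by $e_j$ gives $(r_j-r)e_j\ge 0$, so $r_j\ge r>0$ by Lemma~\ref{lem: technical properties}(1) (and any vanishing remainder idempotent is excluded the same way), whence $\sum_j r_j^{-1}e_j$ is an inverse of $c$. For the forward implication, if $c$ is invertible then no nonzero $e_j$ can carry coefficient $0$ (else $ce_j=0$ would force $e_j=c^{-1}ce_j=0$); since $ce_j=r_je_j\ge 0$ forces $r_j\ge 0$ by Lemma~\ref{lem: technical properties}(1), every coefficient is strictly positive, and $r:=\min_j r_j>0$ satisfies $r\le c$.

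For (2) I would first dispose of the invertibility claims using (1). From $b\ll c$ we get $b\le c$ by (P2), and (1) supplies $0<r\le b\le c$, so $c$ is invertible. Next, $\alpha$ is order preserving by (PM2) and fixes $\mathbb R$ by (PM1), (PM5), (PM7$^\prime$), so $\alpha(c)\ge r>0$, and applying (1) inside $S'$ makes $\alpha(c)$ invertible; similarly $\alpha(b^{-1})$ is invertible since $b$ is bounded above by some positive real, so $b^{-1}$ is bounded below by a positive real.

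The inequality $\alpha(c)^{-1}\le\alpha(b^{-1})$ is the crux, and here I would transport everything to the de Vries power. Under the $\bal$-isomorphism $(-)^\flat\colon S\to\mathbb R[\Id(S)]^\flat$ of Lemma~\ref{lem: inequality}, the map $\alpha$ becomes $\sigma^\flat$ for the de Vries morphism $\sigma=\alpha|_{\Id(S)}$, and $b\ll c$ becomes $b^\flat\prec^\flat c^\flat$, i.e.\ $b^\flat(t)\prec c^\flat(t)$ for every $t$. Using the operations recorded after Definition~\ref{star definition}, the flat of the ring inverse of a positive invertible element is read off from its upper level sets; so at a threshold $r>0$, writing $t=1/r$, $C_t=\bigvee_{s>t}c^\flat(s)$ and $B_t=\bigvee_{s>t}b^\flat(s)$, one computes $\sigma^\flat(c)^{-1}(r)=\sigma(C_t)^*$ and $\alpha(b^{-1})(r)=\sigma(B_t^*)$. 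Because the decompositions are finite, these suprema are attained for $s$ just above $t$, so $B_t\prec C_t$, and then (M3) gives $\sigma(B_t^*)^*\prec\sigma(C_t)$, that is $\sigma(C_t)^*\le\sigma(B_t^*)$. Since this holds at every threshold, $\alpha(c)^{-1}\le\alpha(b^{-1})$.

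The main obstacle is exactly this last step. Ring inversion does not interact visibly with a weak proximity morphism, and every naive estimate from Lemma~\ref{lem: inequality} (such as $\alpha(b)\alpha(b^{-1})\le\alpha(1)=1$) points the wrong way, yielding only upper bounds on the relevant product; a purely formal argument from the axioms does not seem to give the needed lower bound. It is only after rewriting inverses as complements of upper level sets in the flat picture that the \emph{strict} relation $b\ll c$ can be fed into axiom (M3), and the bookkeeping verifying that the level sets satisfy $B_t\prec C_t$ rather than merely $B_t\le C_t$ is precisely where $b\ll c$, as opposed to $b\le c$, is used.
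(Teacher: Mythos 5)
Your proposal is correct. Part (1) is the paper's own argument in substance: full orthogonal decomposition, Lemma~\ref{lem: technical properties}(1)--(2) to force every coefficient positive, and $r=\min_j r_j$. For part (2) you arrive at the same conceptual pivot as the paper --- feed the strict relation $b\ll c$, idempotent by idempotent, into the axiom (M3)/(PM3) and then complement --- but you execute it in the de Vries power rather than in $S$ itself, and that is a genuine difference in technique. The paper stays inside $S$ and $S'$: it writes $b=r_0+\sum_i r_ie_i$ and $c=r_0+\sum_i r_if_i$ in compatible decreasing form, computes $b^{-1}$ and $\alpha(c)^{-1}$ explicitly in decreasing form (their idempotents turn out to be precisely the complements $e_i^*$ and $\alpha(f_i)^*$, with coefficients the increments of the inverted partial sums $p_i^{-1}$), pushes $\alpha$ through with Lemma~\ref{lem: technical properties}(6), extracts $e_i\ll f_i$ from Claim~\ref{claim: lhd and ll}, and concludes $\alpha(f_i)^*\le\alpha(e_i^*)$ via (PM3) and Equation~(\ref{5.1}). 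Your thresholds $t$ are the paper's partial sums $p_i$, your $B_t^*$ and $\sigma(C_t)^*$ are the paper's $e_i^*$ and $\alpha(f_i)^*$, and your application of (M3) to $\sigma$ is the transported form of the paper's (PM3) step; your observation that ring inversion complements the upper level sets is exactly what the paper's explicit decreasing form of $b^{-1}$ encodes. What your route buys is transparency: the level-set picture makes the complementation mechanism visible and spares you the orthogonal-to-decreasing inversion algebra and the Lemma~\ref{lem: technical properties}(6) bookkeeping. The cost is reliance on the flat dictionary --- the commutative square identifying $\alpha$ with $\sigma^\flat$ (already invoked in the proof of Lemma~\ref{lem: inequality}) and the equivalence $b\ll c$ iff $b^\flat\prec^\flat c^\flat$, which is the content of Claim~\ref{claim: lhd and ll} and \cite[Thm.~5.11]{BCMO22a} --- so your argument is no more choice-dependent than the paper's, but it does outsource a step the paper proves by hand. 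Two points to tidy, both routine: when choosing $s$ just above $t$ you must pass the finitely many jump points of $b^\flat$ and $c^\flat$ \emph{simultaneously}, so that $B_t=b^\flat(s)\prec c^\flat(s)=C_t$ holds at one common $s$; and the pointwise comparison should also be recorded at thresholds $r\le 0$, where both sides equal $1$ (using $\sigma(1)=1$, which follows from (M1), (M3), and (DV1)).
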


\begin{proof}
(1) Suppose $0 < r \le c$ for some $r \in \mathbb{R}$. 
Write $c = r_0 e_0 + \cdots + r_n e_n$ in full orthogonal form. Then $re_i \le ce_i = r_i e_i$, which implies that $r \le r_i$ by Lemma~\ref{lem: technical properties}(2). Consequently, each $r_i \ne 0$, and hence $r_0^{-1} e_0 + \cdots + r_n^{-1} e_n$ is the multiplicative inverse of $c$. 

Conversely, let $c$ be invertible, and write $c = r_0 e_0 + \cdots + r_n e_n$ as above. Without loss of generality suppose that $r_0 \le r_i$ for each $i$.  Since $0 \le c, e_i$ we have $0 \le ce_i = r_ie_i$, so $r_i \ge 0$ by Lemma~
\ref{lem: technical properties}(1). If $r_0 = 0$, then $ce_0 = r_0 e_0 = 0$, which is false since $c$ is invertible and $e_0 \ne 0$. Therefore,
\[
c \ge r_0e_0 + \cdots + r_0 e_n = r_0(e_0 + \cdots + e_n) = r_0(e_0 \vee \cdots \vee e_n) = r_0.
\]

(2) Write $b = r_0 + \sum_{i=1}^n r_i e_i$ and $c = r_0 + \sum_{i=1}^n r_i f_i$ in compatible decreasing form by Lemma~\ref{lem: technical properties}(5). We may assume that $1 > e_1, f_1$. Since $e_1 \ge e_i$, we have $e_1^*e_i = 0$ for each $i$. Therefore, if $r_0 = 0$, then $be_1^* = 0$, so $e_1^* = 0$ as $b$ is invertible. This forces $e_1 = 1$, which is false by assumption. So, $r_0 \ne 0$. In addition, since $b \ge 0$, we have $r_0e_1^* = be_1^* \ge 0$, which implies that $r_0 > 0$ by Lemma~\ref{lem: technical properties}(1). Set $p_0 = r_0$ and $p_i = r_0 + r_1 +  \cdots + r_i$ for $i \ge 1$. As each $r_i \ge 0$ for $i \ge 1$, all $p_i > 0$. 
Therefore,
\begin{align*}
b &= (r_0 + r_1 + \cdots + r_n)e_n + (r_0 + r_1 + \cdots + r_{n-1})(e_{n-1} - e_n) + \cdots + (r_0 + r_1)(e_1 - e_2) + r_0(1-e_1) \\
&= p_ne_n + p_{n-1}(e_{n-1} - e_n) + \cdots + p_1(e_1 - e_2) + p_0(1 - e_1)
\end{align*}
is in full orthogonal form. Consequently, since $b$ is invertible and all $p_i \ne 0$, 
\[
b^{-1} =  p_n^{-1}e_n + p_{n-1}^{-1}(e_{n-1} - e_n) + \cdots + p_1^{-1}(e_1 - e_2) + p_0^{-1}(1 - e_1).
\]
Because $0 < p_0 \le \cdots \le p_n$, we have $p_n^{-1} \le \cdots \le p_0^{-1}$. From this we may write $b^{-1}$ in decreasing form as
\begin{align*}
b^{-1} &= p_n^{-1}(e_n + e_{n-1} - e_n + \cdots + 1 - e_1) + (p_{n-1}^{-1} - p_n^{-1})(e_{n-1} - e_n + \cdots + 1 - e_1) + \cdots + p_0^{-1}(1-e_1) \\
&= p_n^{-1} + (p_{n-1}^{-1} - p_{n-2}^{-1})e_n^* + \cdots + p_0^{-1}e_1^*.
\end{align*}
Thus, by Lemma~\ref{lem: technical properties}(6),
\[
\alpha(b^{-1}) = p_n^{-1} + (p_{n-1}^{-1} - p_{n-2}^{-1})\alpha(e_n^*) + \cdots + p_0^{-1}\alpha(e_1^*).
\]
Since $b$ is invertible, there is $0 < r \in \mathbb{R}$ with $r \le b$ by (1). Because $b \ll c$, (P2) implies that $r \le c$. Therefore, $r \le \alpha(c)$, and so $\alpha(c)$ is invertible by (1). Since $\alpha(c) = r_0 + \sum_{i=1}^n r_i \alpha(f_i)$ by Lemma~\ref{lem: technical properties}(6), a similar calculation applied to $\alpha(c)$ yields
\[
\alpha(c)^{-1} = p_n^{-1} + (p_{n-1}^{-1} - p_{n-2}^{-1})\alpha(f_n)^* + \cdots + p_0^{-1}\alpha(f_1)^*.
\]
From $b \ll c$ we get $e_i \ll f_i$ for each $i$ by Claim~\ref{claim: lhd and ll}. Therefore, $-\alpha(-e_i) \ll \alpha(f_i)$, and so $\alpha(e_i^*)^* \ll \alpha(f_i)$ by Equation~(\ref{5.1}). Taking complements gives $\alpha(f_i)^* \ll \alpha(e_i^*)$, and so $\alpha(f_i)^* \le \alpha(e_i^*)$ for each $i$. Thus,  $\alpha(c)^{-1} \le \alpha(b^{-1})$.
\end{proof}

\begin{lemma} \label{rem: some properties}
Let $(S, \ll)$ be a proximity Baer-Specker algebra and $a, b  \in S$. If $a = r_0 + \sum_i r_i e_i$ and $b = r_0 + \sum_i r_i f_i$ are in decreasing form, then
\begin{enumerate}[$(1)$]
\item $a \vee b = r_0 + \sum_i r_i (e_i \vee f_i)$,
\item $a \wedge b = r_0 + \sum_i r_i (e_i \wedge f_i)$.
\end{enumerate}
\end{lemma}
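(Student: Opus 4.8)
The plan is to reduce both identities to a single coordinatewise computation over a common system of orthogonal idempotents. Since $\vee$ and $\wedge$ are translation invariant in an $\ell$-algebra, adding or subtracting the scalar $r_0$ shifts both sides of (1) and (2) by the same constant, so I may assume $r_0 = 0$; thus $a = \sum_{i=1}^n r_i e_i$ and $b = \sum_{i=1}^n r_i f_i$ with $r_i \ge 0$, $1 \ge e_1 \ge \cdots \ge e_n$, and $1 \ge f_1 \ge \cdots \ge f_n$. Moreover, it suffices to prove only the meet identity (2): granting it, the join identity (1) follows from the lattice identity $a \vee b = a + b - (a \wedge b)$ combined with the boolean identity $e_i \vee f_i + e_i \wedge f_i = e_i + f_i$, since these give $a \vee b = \sum_i r_i(e_i + f_i) - \sum_i r_i(e_i \wedge f_i) = \sum_i r_i(e_i \vee f_i)$.

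To prove (2), I would pass to the finite boolean subalgebra $B_0 \subseteq \Id(S)$ generated by $e_1, \dots, e_n, f_1, \dots, f_n$ and let $p_1, \dots, p_m$ be its atoms; these are pairwise orthogonal idempotents with $\sum_k p_k = 1$, and their construction is purely finitary. Each $e_i$ and $f_i$ is the sum of the atoms below it, and since $e_1 \ge \cdots \ge e_n$, for each atom $p_k$ the set $\{i : p_k \le e_i\}$ is an initial segment $\{1, \dots, K_k\}$; likewise $\{i : p_k \le f_i\} = \{1, \dots, L_k\}$. Substituting these expansions and collecting the coefficient of each $p_k$ produces full orthogonal decompositions $a = \sum_k \alpha_k p_k$ and $b = \sum_k \beta_k p_k$, where $\alpha_k = r_1 + \cdots + r_{K_k}$ and $\beta_k = r_1 + \cdots + r_{L_k}$.

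The crux is the coordinatewise behaviour of $\wedge$ relative to a common orthogonal decomposition, namely $(\sum_k \alpha_k p_k) \wedge (\sum_k \beta_k p_k) = \sum_k \min(\alpha_k, \beta_k)\, p_k$. Writing $c = \sum_k \min(\alpha_k,\beta_k)\,p_k$, the differences $a - c$ and $b - c$ are sums of nonnegative multiples of the orthogonal idempotents $p_k$, so $c \le a, b$. For the greatest-lower-bound direction, given any $d \le a, b$, I multiply these inequalities by $p_k \ge 0$ (using that the product of nonnegatives is nonnegative in an $f$-ring) to get $d p_k \le \alpha_k p_k$ and $d p_k \le \beta_k p_k$, whence $d p_k \le \min(\alpha_k,\beta_k)\,p_k$; summing over $k$ and using $\sum_k p_k = 1$ gives $d \le c$. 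This greatest-lower-bound step is where I expect the main obstacle, chiefly in justifying that $\alpha_k p_k \wedge \beta_k p_k = \min(\alpha_k,\beta_k)\,p_k$ and that the coordinatewise inequalities may be summed. To finish, I use $r_i \ge 0$ to conclude the partial sums are monotone, so $\min(\alpha_k,\beta_k) = r_1 + \cdots + r_{\min(K_k,L_k)}$; since $p_k \le e_i \wedge f_i$ exactly when $i \le \min(K_k, L_k)$, re-expanding $\sum_i r_i(e_i \wedge f_i)$ over the atoms recovers exactly $\sum_k \min(\alpha_k,\beta_k)\,p_k = c$. This establishes (2), and hence (1).
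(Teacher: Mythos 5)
Your proof is correct, but it takes a genuinely different route from the paper's. The paper stays inside the given decreasing decompositions: setting $p_i = r_0 + \cdots + r_i$, it uses the truncation identities of Lemma~\ref{lem: technical properties} (items (3) and (4), cited from \cite{BCMO22a}) together with vector-lattice distributivity to show $[((a \vee b) - p_{i-1}) \wedge r_i] \vee 0 = r_i(e_i \vee f_i)$, and then recovers $a \vee b$ by a telescoping sum $(a\vee b) - p_0 = \sum_i \left[(a\vee b)\wedge p_i - (a\vee b)\wedge p_{i-1}\right]$; the meet case is declared similar. You instead refine both elements over the atoms $p_k$ of the finite boolean subalgebra generated by the $e_i$ and $f_i$, prove the coordinatewise formula $\left(\sum_k \alpha_k p_k\right) \wedge \left(\sum_k \beta_k p_k\right) = \sum_k \min(\alpha_k,\beta_k)\,p_k$ by a direct greatest-lower-bound argument, and derive the join identity from the $\ell$-group law $x + y = (x \vee y) + (x \wedge y)$ (which holds for all elements, so your ``boolean identity'' needs no boolean structure, and the same law justifies stripping off $r_0$). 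The obstacle you flagged is not one: since $\min(\alpha_k,\beta_k)p_k$ is literally one of $\alpha_k p_k$ or $\beta_k p_k$, the bound $dp_k \le \min(\alpha_k,\beta_k)p_k$ follows from one of the two inequalities you already have, and summing coordinatewise inequalities is valid in any ordered group. One point worth making explicit: re-expanding $e_i \wedge f_i$ as the sum of atoms below both $e_i$ and $f_i$ tacitly identifies the lattice meet in $S$ with the boolean meet in $\Id(S)$; this identification is standard in the paper's framework, and in any case it follows from your own coordinatewise formula applied to the $\{0,1\}$-coefficient decompositions $e_i = \sum_{p_k \le e_i} p_k$ and $f_i = \sum_{p_k \le f_i} p_k$, so your argument is self-repairing on this point. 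As for what each approach buys: the paper's proof is shorter because it reuses the already-established truncation machinery; yours is more self-contained (only the $\ell$-algebra axioms, orthogonality of idempotents, and the modular law), treats the meet and join symmetrically through one reusable min/max formula over a common orthogonal refinement, and, like the paper's, is choice-free.
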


\begin{proof}
We prove (1); the proof of (2) is similar.  For $1 \le i \le n$ set $p_i = r_0 + \cdots + r_i$. By Lemma~\ref{lem: technical properties}(3), we have $[(a- p_{i-1}) \wedge r_i]\vee 0 = r_i e_i$ and $[(b - p_{i-1}) \wedge r_i] \vee 0 = r_i f_i$. Therefore, by standard vector lattice identities,
\begin{align*}
[((a \vee b) - p_{i-1}) \wedge r_i] \vee 0 &=  [((a-p_{i-1}) \vee (b - p_{i-1})) \wedge r_i] \vee 0 \\
&= ([(a - p_{i-1}) \wedge r_i] \vee [(b - p_{i-1}) \wedge r_i]) \vee 0 \\
&= ([(a - p_{i-1}) \wedge r_i] \vee 0) \vee ([(b - p_{i-1}) \wedge r_i] \vee 0) \\
&= r_ie_i \vee r_if_i = r_i(e_i \vee f_i),
\end{align*}
where the last equality holds since $r_i \ge 0$. Because $r_0 \le a, b \le r_n$, we have $p_0 \le a \vee b \le p_n$. Therefore, $(a \vee b) \wedge p_0 = p_0$ and $(a \vee b) \wedge p_n = a \vee b$. By the calculation above and Lemma~\ref{lem: technical properties}(3),
\begin{align*}
(a \vee b) - p_0 &= (a \vee b) \wedge p_n - (a \vee b) \wedge p_0 = \sum_{i=1}^n \left[(a \vee b) \wedge p_i - (a \vee b) \wedge p_{i-1} \right] \\
&=  \sum_{i=1}^n \left( [((a \vee b) - p_{i-1}) \wedge r_i] \vee 0 \right) = \sum_{i=1}^n r_i(e_i \vee f_i).
\end{align*}
Since $r_0 = p_0$, it follows that $a \vee b = r_0 + \sum_i r_i (e_i \vee f_i)$.
\end{proof}

\begin{lemma} \label{lem: join and proximity morphism}
Let $\alpha : (S, \ll) \to (S', \ll')$ be a weak proximity morphism between proximity Baer-Specker algebras. Suppose that $a, b, c\in S$ with $b \ll c$.
\begin{enumerate}[$(1)$]
\item $\alpha(a \vee b) \le \alpha(a) \vee \alpha(c)$.
\item $\alpha(a + b) \le \alpha(a) + \alpha(c)$.
\item If $0 \le a, b$, then $\alpha(ab) \le \alpha(a)\alpha(c)$.
\end{enumerate}
\end{lemma}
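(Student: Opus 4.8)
The plan is to handle the three parts separately, each reducing to machinery already assembled in the appendix. Part (1) I would prove by a direct computation with decreasing decompositions, whereas parts (2) and (3) I would deduce from the companion inequalities of Lemma~\ref{lem: inequality} by ``cancelling'' $b$: additively in (2) and multiplicatively in (3). Throughout I write $\sigma := \alpha|_{\Id(S)}$, which is a de Vries morphism.

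For (1), I would first put $a,b,c$ into a common (compatible) decreasing form $a = r_0 + \sum_i r_i d_i$, $b = r_0 + \sum_i r_i e_i$, $c = r_0 + \sum_i r_i f_i$ with the same positive coefficients $r_i$ and decreasing idempotents; this is Lemma~\ref{lem: technical properties}(5) applied to three elements (take a common refinement of their finitely many values, so the same proof works). Since $b \ll c$, Claim~\ref{claim: lhd and ll} gives $e_i \prec f_i$ for each $i$, where $\prec$ is the restriction of $\ll$ to idempotents. By Lemma~\ref{rem: some properties}(1), $a \vee b = r_0 + \sum_i r_i (d_i \vee e_i)$, so Lemma~\ref{lem: technical properties}(6) yields $\alpha(a \vee b) = r_0 + \sum_i r_i \alpha(d_i \vee e_i)$. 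Because $e_i \prec f_i$, Lemma~\ref{lem: de Vries fact} gives $\alpha(d_i \vee e_i) = \sigma(d_i \vee e_i) \le \sigma(d_i) \vee \sigma(f_i) = \alpha(d_i) \vee \alpha(f_i)$. Finally $\alpha(a) = r_0 + \sum_i r_i \alpha(d_i)$ and $\alpha(c) = r_0 + \sum_i r_i \alpha(f_i)$ (again Lemma~\ref{lem: technical properties}(6)) are in compatible decreasing form, so Lemma~\ref{rem: some properties}(1) gives $\alpha(a) \vee \alpha(c) = r_0 + \sum_i r_i(\alpha(d_i) \vee \alpha(f_i))$, which is exactly the upper bound just obtained. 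Hence $\alpha(a \vee b) \le \alpha(a) \vee \alpha(c)$.

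For (2), I would apply Lemma~\ref{lem: inequality}(1) to the pair $a+b$ and $-b$, whose sum is $a$, obtaining $\alpha(a+b) + \alpha(-b) \le \alpha(a)$, that is, $\alpha(a+b) \le \alpha(a) - \alpha(-b)$. Since $\alpha$ satisfies (PM3) and $b \ll c$, we have $-\alpha(-b) \ll' \alpha(c)$, hence $-\alpha(-b) \le \alpha(c)$ by (P2). Therefore $\alpha(a+b) \le \alpha(a) + (-\alpha(-b)) \le \alpha(a) + \alpha(c)$, with no case analysis. For (3), I would first dispose of the case where $b$ is invertible: then $0 \le b \ll c$, so Lemma~\ref{lem: reflexive invertible elements}(2) gives that $\alpha(c)$ is invertible with $\alpha(c)^{-1} \le \alpha(b^{-1})$; multiplying by $\alpha(ab) \ge 0$ gives $\alpha(ab)\alpha(c)^{-1} \le \alpha(ab)\alpha(b^{-1})$, and Lemma~\ref{lem: inequality}(2) (applied to the nonnegative $ab$ and $b^{-1}$) bounds the right side by $\alpha(ab \cdot b^{-1}) = \alpha(a)$; multiplying $\alpha(ab)\alpha(c)^{-1} \le \alpha(a)$ by $\alpha(c) \ge 0$ yields $\alpha(ab) \le \alpha(a)\alpha(c)$. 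For general $0 \le b \ll c$ I would perturb: fix $\varepsilon > 0$; since $\varepsilon \ll \varepsilon$ (by (P1) and (P7)) and $b \ll c$, axiom (P6) gives $b + \varepsilon \ll c + \varepsilon$, and $b + \varepsilon \ge \varepsilon > 0$ is invertible by Lemma~\ref{lem: reflexive invertible elements}(1). The invertible case applied to $a(b+\varepsilon)$ gives $\alpha(a(b+\varepsilon)) \le \alpha(a)\alpha(c+\varepsilon)$. Now $\alpha(a(b+\varepsilon)) = \alpha(ab + \varepsilon a) \ge \alpha(ab) + \varepsilon \alpha(a)$ by Lemma~\ref{lem: inequality}(1) and (PM5), while $\alpha(c+\varepsilon) = \alpha(c) + \varepsilon$ by (PM7$'$), so $\alpha(a)\alpha(c+\varepsilon) = \alpha(a)\alpha(c) + \varepsilon \alpha(a)$; cancelling $\varepsilon \alpha(a)$ gives $\alpha(ab) \le \alpha(a)\alpha(c)$.

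The only genuinely delicate point is (3): because $b$ need not be invertible, the clean ``multiply by $b^{-1}$'' cancellation is unavailable, and this is precisely the role of Lemma~\ref{lem: reflexive invertible elements} together with the perturbation of $b$ to $b+\varepsilon$, which must be checked to preserve positivity, invertibility, and the proximity $\ll$ simultaneously. A secondary bookkeeping point is the production of a single compatible decreasing decomposition of the three elements $a,b,c$ in part (1); this is routine but must be stated so that Lemma~\ref{rem: some properties}(1) and Lemma~\ref{lem: technical properties}(6) can be invoked with identical coefficients on both sides of the target inequality. I would expect no real difficulty in (2), where the additive inverse does all the work.
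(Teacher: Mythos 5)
Your proposal is correct and follows essentially the same route as the paper's own proof: part (1) via compatible decreasing decompositions combined with Lemma~\ref{rem: some properties}(1), Lemma~\ref{lem: technical properties}(5)--(6), and Lemma~\ref{lem: de Vries fact} (the paper likewise applies the two-element compatibility lemma to three elements, and likewise uses the componentwise consequence $e_i \prec f_i$ of $b \ll c$); part (2) via Lemma~\ref{lem: inequality}(1) and (PM3) exactly as in the paper; and part (3) via the invertible case built on Lemma~\ref{lem: reflexive invertible elements}(2) and Lemma~\ref{lem: inequality}(2). The only (immaterial) deviation is the reduction step in (3), where you shift by an arbitrary $\varepsilon > 0$ and cancel $\varepsilon\alpha(a)$ exactly, whereas the paper fixes the shift $1+b \ll 1+c$; since the cancellation is exact and no limit is taken, your argument with $\varepsilon = 1$ is literally the paper's.
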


\begin{proof}
(1) By Lemma~\ref{lem: technical properties}(5) we may write 
$a = r_0 + \sum_{i=1}^n r_i e_i$, $b = r_0 + \sum_{i=1}^n r_i f_i$, and $c = r_0 + \sum_{i=1}^n r_i g_i$ in decreasing form. Then $a \vee b = r_0 + \sum_{i=1}^n r_i (e_i\vee f_i)$ by Lemma~\ref{rem: some properties}(1). Therefore, 
$\alpha(a \vee b) = r_0 + \sum_{i=1}^n r_i \alpha(e_i \vee f_i)$ by Lemma~\ref{lem: technical properties}(6). Since $\alpha(e_i \vee f_i) \le \alpha(e_i) \vee \alpha(g_i)$ by Lemma~\ref{lem: de Vries fact}, we have
\begin{align*}
\alpha(a \vee b) &= r_0 + \sum_{i=1}^n r_i \alpha(e_i \vee f_i) \\
&\le r_0 + \sum_{i=1}^n (r_i \alpha(e_i) \vee r_i\alpha(g_i)) = \alpha(a) \vee \alpha(c),
\end{align*}
where the last equality follows from Lemma~\ref{rem: some properties}(1).

(2) Since $b \ll c$, we have $-\alpha(-b) \le \alpha(c)$ by (PM3) and (P2), so $-\alpha(c) \le \alpha(-b)$. Therefore, by Lemma~\ref{lem: inequality}(1), 
\[
\alpha(a+b) - \alpha(c) \le \alpha(a + b) + \alpha(-b) \le \alpha((a+b) + (-b)) = \alpha(a).
\] 
This yields $\alpha(a+b) \le \alpha(a) + \alpha(c)$.

(3) First suppose that $b$ is invertible.  Since $0 \le b$, Lemma~\ref{lem: reflexive invertible elements}(1) shows that $0 < r \le b$ for some $r \in \mathbb{R}$. Because $b \ll c$, we also have $r \le c$, and so $c$ is invertible.  
By Lemma~\ref{lem: reflexive invertible elements}(2), $\alpha(c)^{-1} \le \alpha(b^{-1})$. Consequently, 
\[
\alpha(ab) \alpha(c)^{-1} \le \alpha(ab)\alpha(b^{-1}) \le \alpha((ab)b^{-1}) = \alpha(a)
\]
by Lemma~\ref{lem: inequality}(2). Multiplying by $\alpha(c)$ yields $\alpha(ab) \le \alpha(a)\alpha(c)$.

For an arbitrary $b\ge 0$, by Lemma~\ref{lem: reflexive invertible elements}(1), $1 + b$ is invertible, and $1 + b \ll 1 + c$. Therefore, by the previous case, $\alpha(a(1 + b)) \le \alpha(a)\alpha(1 + c)$. Since $\alpha$ is a weak proximity morphism,
\[
\alpha(a + ab) = \alpha(a(1+b)) \le \alpha(a)\alpha(1+c) = \alpha(a)(1 + \alpha(c)) = \alpha(a) + \alpha(a)\alpha(c).
\] 
By Lemma~\ref{lem: inequality}(1), 
\[
\alpha(a) + \alpha(ab) \le \alpha(a + ab) \le \alpha(a) + \alpha(a)\alpha(c).
\]
Subtracting $\alpha(a)$ yields (3).
\end{proof}

\begin{remark}
Lemma~\ref{lem: join and proximity morphism}(2) follows from \cite[Lem.~7.1(2)]{BMMO15b}, the proof of which is not choice-free.
\end{remark}

We are ready to prove the main result of the Appendix.

\begin{theorem}\label{prop:weak proximity morphism}
Let $(S, \ll)$ and $(S', \ll')$ be proximity Baer-Specker algebras. A map $\alpha : S \to S'$ is a proximity morphism iff it is a weak proximity morphism.
\end{theorem}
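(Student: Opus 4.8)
The plan is to treat the two implications separately, the forward one being essentially immediate and the reverse one being where the lemmas of the Appendix pay off. Throughout, ``reflexive'' means $c \ll c$, and I will use freely that every $r \in \mathbb{R}$ is reflexive (from (P1) via (P5) and (P7)).

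For the forward implication, suppose $\alpha$ is a proximity morphism. Axioms (PM1)--(PM5) are common to both notions, so only (PM6$'$) and (PM7$'$) require checking. The key preliminary observation is that $\alpha(r) = r$ for every $r \in \mathbb{R}$: for $r > 0$ this follows from (PM5) and (PM1), since $\alpha(r) = \alpha(r \cdot 1) = r\alpha(1) = r$; the case $r = 0$ is (PM1); and for $r < 0$, writing $r = -s$ with $s > 0$ and applying (PM7) with the reflexive element $s$, we get $0 = \alpha(0) = \alpha(-s + s) = \alpha(-s) + \alpha(s) = \alpha(-s) + s$, whence $\alpha(-s) = -s$. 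Since each $r \in \mathbb{R}$ is reflexive, (PM6$'$) and (PM7$'$) are then nothing but the instances $c = r$ of (PM6) and (PM7), combined with $\alpha(r) = r$.

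For the reverse implication, suppose $\alpha$ is a weak proximity morphism; I must promote (PM6$'$) and (PM7$'$) to the full axioms (PM6), (PM7), (PM8) for an arbitrary reflexive $c$. First, (PM2) makes $\alpha$ order preserving, since $a \le b$ gives $\alpha(a) = \alpha(a \wedge b) = \alpha(a) \wedge \alpha(b)$. The two directions of each identity now come from inequalities already at hand: Lemma~\ref{lem: inequality} supplies $\alpha(a) + \alpha(c) \le \alpha(a + c)$ and (for nonnegative arguments) $\alpha(a)\alpha(c) \le \alpha(ac)$, while monotonicity gives $\alpha(a) \vee \alpha(c) \le \alpha(a \vee c)$. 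For the opposite inequalities I would invoke Lemma~\ref{lem: join and proximity morphism} with $b = c$, which is legitimate precisely because $c \ll c$; this yields $\alpha(a \vee c) \le \alpha(a) \vee \alpha(c)$, $\alpha(a + c) \le \alpha(a) + \alpha(c)$, and $\alpha(ac) \le \alpha(a)\alpha(c)$ for $a \ge 0$. Combining the two directions gives (PM6) and (PM7) at once, and gives (PM8) for all $a \ge 0$.

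The one step that is not purely mechanical, and which I expect to be the main obstacle, is extending (PM8) from $a \ge 0$ to arbitrary $a$, since a weak proximity morphism need not be additive. Here I would use a shift: choose $r > 0$ with $a + r \ge 0$ (possible since every element of a $\bal$-algebra is bounded below), so that the nonnegative case gives $\alpha(c(a + r)) = \alpha(c)\alpha(a + r)$. Rewriting $c(a + r) = ca + rc$ and noting $rc \ll rc$ by (P7), the now-established (PM7) together with (PM5) gives $\alpha(ca + rc) = \alpha(ca) + r\alpha(c)$, while (PM7$'$) gives $\alpha(c)\alpha(a + r) = \alpha(c)\alpha(a) + r\alpha(c)$. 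Cancelling $r\alpha(c)$ yields $\alpha(ca) = \alpha(c)\alpha(a)$, which completes (PM8) and hence the proof.
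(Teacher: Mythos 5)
Your proposal is correct and follows essentially the same route as the paper: monotonicity from (PM2), the lower bounds from Lemma~\ref{lem: inequality}, and the upper bounds from Lemma~\ref{lem: join and proximity morphism} applied with $b = c$, which is legitimate precisely because $c \ll c$. The only differences are cosmetic: where the paper cites the argument of \cite[Rem.~8.9]{BMO16} to pass from $a \ge 0$ to arbitrary $a$ in (PM8), you give a valid self-contained translation argument (shifting to $a + r \ge 0$, using (P7) to get $rc \ll rc$, and cancelling $r\alpha(c)$), and you spell out the forward direction, including $\alpha(r) = r$ and the reflexivity of real constants, which the paper dismisses as clear.
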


\begin{proof}
Clearly if $\alpha$ is a proximity morphism, then it is a weak proximity morphism. For the converse, we only need to show that axioms (PM6)--(PM8) hold.
Let $a, c \in S$ with $c \ll c$. 

(PM6) The inequality $\alpha(a \vee c) \ge \alpha(a) \vee \alpha(c)$ holds since $\alpha$ is order preserving. The reverse inequality holds by Lemma~\ref{lem: join and proximity morphism}(1).

(PM7) The inequality $\alpha(a+c) \ge \alpha(a) + \alpha(c)$ holds by Lemma~\ref{lem: inequality}(1), and the reverse inequality by Lemma~\ref{lem: join and proximity morphism}(2).

(PM8) 
Let $0\le c$. First suppose that $0 \le a$. The inequality $\alpha(ac) \ge \alpha(a)\alpha(c)$ holds by Lemma~\ref{lem: inequality}(2), and the reverse inequality by Lemma~\ref{lem: join and proximity morphism}(3). Now apply the argument of \cite[Rem.~8.9]{BMO16} to conclude that the equality holds for all $a$. 
\end{proof}

\bibliographystyle{amsplain}
\bibliography{../Gelfand}

\end{document}